\documentclass[11pt]{article}


\setlength\topmargin{0pt} \addtolength\topmargin{-\headheight}
\addtolength\topmargin{-\headsep} \setlength\oddsidemargin{0pt}
\setlength\textwidth{\paperwidth} \addtolength\textwidth{-2in}
\setlength\textheight{\paperheight} \addtolength\textheight{-2in}
\usepackage{layout}
\usepackage[makeroom]{cancel}
\usepackage{bbm}
\usepackage[affil-it]{authblk}
\usepackage{cite}
\usepackage{amsfonts}
\usepackage{amsmath}
\usepackage{amssymb}
\usepackage{amsthm}
\usepackage{graphicx} \usepackage{enumerate} \usepackage{multicol}
\usepackage{mathrsfs} \usepackage[all,cmtip]{xy}
\usepackage{enumerate}
\usepackage{cite}
\usepackage[dvipsnames]{xcolor}

\raggedbottom

\allowdisplaybreaks[4]

\newlength{\bibitemsep}\setlength{\bibitemsep}{.10\baselineskip plus .05\baselineskip minus .05\baselineskip}
\newlength{\bibparskip}\setlength{\bibparskip}{0pt}
\let\oldthebibliography\thebibliography
\renewcommand\thebibliography[1]{%
  \oldthebibliography{#1}%
  \setlength{\parskip}{\bibitemsep}%
  \setlength{\itemsep}{\bibparskip}%
}

\usepackage[colorlinks,linkcolor=blue,citecolor=blue,pagebackref,hypertexnames=false, breaklinks]{hyperref}


\newtheorem{theorem}{Theorem}[section]

\newtheorem{definition}[theorem]{Definition}

\newtheorem{corollary}[theorem]{Corollary}
\newtheorem{lemma}[theorem]{Lemma}
\newtheorem{remark}[theorem]{Remark}
\newtheorem{example}[theorem]{Example}
\newtheorem{examples}[theorem]{Examples}
\newtheorem{foo}[theorem]{Remarks}









\def\vint{\mathop{\mathchoice%
          {\setbox0\hbox{$\displaystyle\intop$}\kern 0.22\wd0%
           \vcenter{\hrule width 0.6\wd0}\kern -0.82\wd0}%
          {\setbox0\hbox{$\textstyle\intop$}\kern 0.2\wd0%
           \vcenter{\hrule width 0.6\wd0}\kern -0.8\wd0}%
          {\setbox0\hbox{$\scriptstyle\intop$}\kern 0.2\wd0%
           \vcenter{\hrule width 0.6\wd0}\kern -0.8\wd0}%
          {\setbox0\hbox{$\scriptscriptstyle\intop$}\kern 0.2\wd0%
           \vcenter{\hrule width 0.6\wd0}\kern -0.8\wd0}}%
          \mathopen{}\int}

\newcommand{\R}{\mathbb R}



\title{ BV spaces and the perimeters related to Schr\"{o}dinger operators with inverse-square potentials and applications to the rank-one theorem
      }

\vspace{0.5cm}

\author{   \  Yang Han, \ \  Jizheng Huang,
     \  \  Pengtao Li,\  \  Yu Liu\footnote{Corresponding author.}}
\vspace{-1.5cm}

\date{}

\begin{document}

\maketitle

{\bf Abstract:} For $a \ge - {( \frac{{d}}{2}- 1)^2} $ and $2\sigma=
{{d - 2}}-( {{{(d - 2)}^2} + 4a})^{1/2}$, let
$$\begin{cases}\mathcal{H}_{a}= - \Delta +
\frac{a} {{{{ | x  |}^2}}},\\
\mathcal{\widetilde{H}}_{\sigma}=
2\big( { - \Delta  + \frac{{{\sigma ^2}}} {{{{ | x
|}^2}}}}\big)\end{cases}$$ be two Schr\"odinger operators with
inverse-square potentials. In this
paper, on the domain $\Omega  \subset {\mathbb {R}^d}\backslash \{ 0\}, d\geq 2,$ 
the ${\mathcal{H} _a}$-BV space  $\mathcal{B} {\mathcal{V}
_{{\mathcal{H} _a}}}(\Omega )$ and the
${\mathcal{\widetilde{H}}_{\sigma}}$-BV space $\mathcal{B}
{\mathcal{V} _{{\mathcal{\widetilde H} _\sigma}}}(\Omega )$ related
to $\mathcal{H}_{a}$ and $\mathcal{\widetilde{H}}_{\sigma}$ are
introduced, respectively. We investigate a series of basic
properties of $\mathcal{B} {\mathcal{V} _{{\mathcal{H} _a}}}(\Omega
)$ and $\mathcal{B} {\mathcal{V} _{{\mathcal{\widetilde H}
_{\sigma}}}}(\Omega )$. Furthermore, we prove that
${\mathcal{\widetilde{H}}_{\sigma}}$-restricted BV functions can be
characterized equivalently via their subgraphs. As applications, we
derive the rank-one theorem for
${\mathcal{\widetilde{H}}_{\sigma}}$-restricted BV functions.

\vspace{0.2cm}

{\bf Keywords:} {rank-one theorem, subgraphs, BV space, Schr\"{o}dinger operator. }

\vspace{0.2cm}

{\bf 2020 Mathematics Subject Classification:   49Q15, 35J10,
26B30.}

\tableofcontents

\section{Introduction}
\hspace{0.6cm}
In this paper, we will discuss several basic questions of geometric
measure theory related to the Schr\"{o}dinger operator with
inverse-square potential:
\[\mathcal{H}_{a}   =  - \Delta  + \frac{a}{{{{ | x  |}^2}}},\quad
a \ge  - {\left( {{{\frac{d}{2} - 1}}} \right)^2}\] on the Euclidean
space $\mathbb{R}^d$ with $d \ge 2$. More precisely, we interpret
$\mathcal{H}_{a}$ as the Friedrichs extension of this operator
defined initially on $C_c^\infty (\mathbb{R}^d\backslash  \{ 0
 \})$ (cf. \cite{KMVZZ}). The operator $\mathcal{H}_{a}$ often
appears in mathematics and physics, and is usually used as scale
limit for more complex problems. The references \cite{BPST1, BPST2,
KSWW, VZ, ZZ} discuss several examples of this situation in physics.
These examples range from combustion theory to the Dirac equation with
Coulomb potentials, and the study of the disturbance of classical
space-time metric such as Schwarzschild and Reissner-Nordstr\"{o}m
metric. The appearance of $\mathcal{H}_{a}$ as a scale limit (from
both microscopic and astronomical aspects) is a signal of its unique
properties: $\mathcal{H}_{a}$ is scale-invariant. In particular, the
potential function and Laplace function are equally strong in every
length scale. Accordingly, problems involving $\mathcal{H}_{a}$
rarely obey simple perturbation theory. This is one of the reasons
why we (and many scholars before us) choose this particular operator
for further study.

The first aim of this paper is to investigate the class of functions
of bounded variation  related to $\mathcal{H}_{a}$. In the
literature, a function of bounded variation, simply a BV-function,
is a real-valued function whose total variation is finite.  In the
multi-variable setting, a function defined on an open subset
$\Omega\subseteq \mathbb R^{d}, d\geq 2$, is said to have bounded
variation provided that its distributional derivative is a
vector-valued finite Radon measure over the subset $\Omega$. Let
$\text{div}$ and $\nabla$ denote the divergence operator and the
gradient operator, respectively, where
\begin{equation}\label{eq1}\left\{\begin{aligned} \nabla u&:= \left( {\frac{{\partial u}}{{\partial {x_1}}},\frac{{\partial u}}{{\partial {x_2}}}, \ldots ,\frac{{\partial u}}{{\partial {x_d}}}} \right),\\
    \text{div}\varphi&:=\frac{\partial \varphi _1}{\partial{x_{1}}}+\frac{\partial
        \varphi _2}{\partial
        {x_{2}}}+\cdots+\frac{\partial \varphi _d}{\partial{x_{d}}}
\end{aligned}\right.\end{equation}
 for $u\in C_c^1(\Omega)$ and $ \varphi = (
{{\varphi _1},{\varphi _2}, \ldots ,{\varphi _d}} ) \in C_c^1(\Omega
,\mathbb{R}^{d} )$. A function $u\in L^{1}(\Omega)$ whose partial
derivatives in the sense of distributions are measures with finite
total variation $|Du|$ in $\Omega$ is called a function of bounded
variation, where
\begin{eqnarray*}
    |Du|(\Omega):=\sup\left\{\int_{\Omega}u\text{ div }\nu dx:\
    \nu=(\nu_{1},\ldots, \nu_{d})\in C^{\infty}_{c}(\Omega, \mathbb
    R^{d}), |\nu(x)|\leq 1, x\in\Omega\right\}<\infty.
\end{eqnarray*}
The class of all such functions will be denoted by $BV(\Omega)$. The
norm of $BV(\Omega)$ is defined as
$$\|u\|_{BV(\Omega)}:=\|u\|_{L^{1}(\Omega)}+|Du|(\Omega).$$
Importantly, the BV-functions form an algebra of possibly
discontinuous functions whose weak first partial derivatives exist
and are Radon measures-thanks to this nature, this algebra is
frequently used to define generalized solutions of nonlinear
problems involving functional analysis, ordinary and partial
differential equations, mathematical physics and engineering
science.

Essentially, the above definition of the BV-function  is
corresponding to the Laplace operator $\Delta$, where
$\mathcal{H}_{0}=-\Delta$, since $\Delta$ can be represented as
$\text{ div}(\nabla(\cdot))$. If $d=2$, at this time,  $a \ge -(
\frac{d}{2} - 1)^2$ implies that $a\ge 0$, then $\mathcal{H}_{a} $
is a Schr\"{o}dinger operator with the nonnegative potential and it
is positive semi-definite (e.g. cf. \cite[Section 3]{Auscher}). If
$d\ge 3$, the arguments of \cite[Section 1.1]{KMVZZ} imply that the
restriction $a \ge - {( {d}/{2}- 1)^2} $ ensures that the operator
$\mathcal{H}_{a}$ is positive semi-definite. Moreover,
$\mathcal{H}_{a} $ can be factorized as
\begin{equation*}
    \mathcal{H}_{a} = -\sum\limits_{i = 1}^d {({A_{ - i,a}}{A_{i,a}})} ,
\end{equation*}
where
$${A_{i,a}} = \frac{\partial }{{\partial{x_i}}} + \sigma
\frac{{{x_i}}}{{{{ | x  |}^2}}}, {A_{ - i,a}} =  \frac{\partial }
{{\partial{x_i}}} -\sigma \frac{{{x_i}}}{{{{ | x  |}^2}}}, 1 \le i
\le d $$ with
\[\sigma : = \frac{{d - 2}}{2} - \frac{1}{2}\sqrt {{{(d - 2)}^2} +
4a} .\] When $a=0$, ${A_{ - i,a}}$ and ${A_{i,a}}$ are exactly the
classical partial derivatives. This fact indicates that the
operators ${A_{i,a}}, 1 \le  | i  | \le d,$  play the same role as
the classical partial derivatives $ \frac{\partial
}{{\partial{x_i}}}$ in $\mathbb R^{d}$. Based on this observation,
we call ${A_{i,a}},1 \le  | i
 | \le d$, the generalized derivatives associated to
$\mathcal{H}_{a}$.

{ Throughout this paper,  unless otherwise specified, we always assume that
 $\Omega\subset \mathbb R^{d}\backslash \{ 0\}$ with $d\geq 2$ is a domain.}  For
$u\in C_c^1(\Omega)$ and $\varphi = ( {{\varphi _1},{\varphi _2},
\ldots ,{\varphi _d}} ) \in C_c^1(\Omega ,\mathbb{R}^{d} )$, we
introduce the following generalized gradient operator and
generalized divergence operator associated to $\mathcal{H}_{a}$:
$$ \left\{\begin{aligned}
    {\nabla _{{{{\mathcal H}}_a}}}u&:=({A_{1,a}}u, \ldots ,
    {A_{d,a}}u);\\
    \mathrm{div}_{\mathcal{H} _{a} } \varphi &:={A_{ - 1,a}}{\varphi _1} +
    \cdots  + {A_{ - d,a}}{\varphi _d},
\end{aligned}
\right. $$ which also gives
$$\mathcal{H}_{a}u=-\mathrm{div}_{\mathcal{H} _{a}}\big(\nabla _
{\mathcal{H}_a}u\big)=  - \Delta u + \frac{a}{{{{ | x
 |}^2}}}u\ \ \forall\  u\in C_c^2(\Omega).$$

Naturally, we introduce the class of bounded variation functions
related to ${\mathcal{H} _a}$ denoted by $\mathcal{B} {\mathcal{V}
_{{\mathcal{H}_a}}} (\Omega )$. In Section \ref{sec-2.1}, we
investigate some basic properties of $\mathcal{B} {\mathcal{V}
_{{\mathcal{H} _a}}}(\Omega )$ including the lower semicontinuity,
the structure theorem, the approximation via
$C^{\infty}_{c}$-functions, etc. In Theorem \ref{th2.5}, we prove
that $\mathcal{H}_{a}$-BV functions can be approximated by smooth
functions. It should be noted that in contrast with Theorem 2 in
\cite[Section 5.2.2]{EG}, we need to add a condition (\ref{eq4}) in
Theorem \ref{th2.5}, which can be obtained by the Hardy-Sobolev
inequality (cf. \cite {CKN84}). Moreover, the ${{\mathcal
H}_a}$-perimeter of $E\subseteq \Omega$ induced by $\mathcal{B}
{\mathcal{V} _{{\mathcal{H} _a}}}(\Omega )$ is introduced in Section
\ref{sec-2.2}, see (\ref{def-hp}) below. We obtain the following
coarea inequality for $\mathcal{H}_{a}$-BV function: if $u\in
{{\mathcal B}}{{{\mathcal V}}_{{{\mathcal H}}_a}}(\Omega )$, then
\begin{equation}\label{eq-1.1}
     | {{\nabla _{{{{\mathcal H}}_a}}}u}  |(\Omega ) \le
    \int_{ - \infty }^{ + \infty } {{P_{{{{\mathcal H}}_a}}}({E_t},
    \Omega )dt},
\end{equation}
where ${E_t}=\{x\in\Omega:\ u(x)>t\}$ for $t\in\mathbb{R}$, see
Theorem \ref{th-2.9}.

In Section \ref{sec-3}, our purpose is to establish  the converse of inequality
(\ref{eq-1.1}). Via choosing a sequence of approximation functions, we can deduce that
\begin{align*}
    \int_{ - \infty }^{ + \infty } {{P_{\mathcal{H}_{a}}}({E_t},
    \Omega )} dt&\le \sqrt 2 \int_\Omega  {\Big( { | {\nabla u}  (x)| + \frac{{
     | \sigma   |}}{{ | x  |}} | {u(x)}  |}\Big)dx}
   \end{align*} for $u\in {{\mathcal B}}{{{\mathcal V}}_{{\mathcal{\widetilde{H}}_{\sigma}}}}
     (\Omega )$.
However, it should be noted that $ | {\nabla u}  (x)| + \frac{|
\sigma|}{{ | x |}} | {u(x)} |$ can not be dominated by $| {{\nabla
_{{{{\mathcal H}}_a}}}u}(x) |$. In fact, for example, let
$u(x)=x^2_1,\sigma=-1$, then via computation, we have
$$ | {\nabla u}
(x)| + \frac{| \sigma|}{{ | x |}} | {u(x)}
|=2|x_1|+\frac{x^2_1}{|x|}$$
 and
$$| {{\nabla _{{{{\mathcal H}}_a}}}u}
(x)|=\Big(4x^2_1-\frac{3x^4_1}{|x|^2}\Big)^{{1}/{2}}.$$ Hence, the
previous fact holds true. Thus,  the converse of (\ref{eq-1.1}) does
not hold for ${{\mathcal B}}{{{\mathcal V}}_{{{\mathcal
H}}_a}}(\Omega )$. With this in mind, we introduce the following
subspace of ${{\mathcal B}}{{{\mathcal V}}_{{{\mathcal
H}}_{a}}}(\Omega )$.
 The
classical gradient operator $\nabla$  and the classical divergence
operator $\mathrm{div}$ in (\ref{eq1})   inspire  us to introduce
the following symmetric gradient operator and symmetric divergence
operator related with
$\mathcal{\widetilde{H}}_{\sigma}:=\mathrm{div}_{\mathcal{\widetilde{H}}_{\sigma}}(
{\nabla _{\mathcal{\widetilde{H}}_{\sigma}}}(\cdot))$, precisely,
$$\left\{\begin{aligned}
    {\nabla _{\mathcal{\widetilde{H}}_{\sigma}}} u&:= ({A_{1,a}}u, \ldots ,
    {A_{d,a}}u, {A_{ - 1,a}}u, \ldots ,{A_{ - d,a}}u),\\
    \mathrm{div}_{\mathcal{\widetilde{H}}_{\sigma}}\Phi&: ={A_{ - 1,a}}{\varphi _1} +
    \cdots  + {A_{ - d,a}}{\varphi _d} + {A_{ 1,a}}{\varphi _{d + 1}}
    +
    \cdots  + {A_{d,a}}{\varphi _{2d}}
\end{aligned}  \right. $$
for $u\in C_c^1(\Omega)$ and $\Phi  = ({\varphi _1},{\varphi _2},
\ldots ,{\varphi _{2d}}) \in C_c^1(\Omega ,{\mathbb{R} ^{2d}}).$ The
class of bounded variation functions related to
${\mathcal{\widetilde{H}}_{\sigma}}$ is denoted by $\mathcal{B}
{\mathcal{V}_{{\mathcal{\widetilde{H}}_{\sigma}}}}(\Omega )$. Based
on the definition of ${\nabla _{\mathcal{\widetilde{H}}_{\sigma}}},$
in Theorem \ref {coarea formula}, using the equivalence of $ |\nabla
\varphi|+\frac{{
     | \sigma   |}}{{ | x  |}} | {\varphi(x)}  |  $ and $|{\nabla
_{\mathcal{\widetilde{H}}_{\sigma}}} \varphi|$, we prove that
(\ref{eq-1.1}) can be improved to the following coarea formula:
\begin{equation}\label{eq-1.2}
     | {{
            \nabla _{{\mathcal{\widetilde{H}}_{\sigma}}}}u}  |(\Omega ) \approx \int_
    { - \infty }^{ + \infty } {{P_{{\mathcal{\widetilde{H}}_{\sigma} }}}({E_t},\Omega )
        dt} \quad \forall\  u \in {{\mathcal B}}{{{\mathcal V}}_{\mathcal{\widetilde{H}}_{\sigma}}}(\Omega ).
\end{equation}
By the aid of (\ref{eq-1.2}), we deduce that the Sobolev type inequality
\[{ \| f  \|_{{L^{{d}/{(d-1)}}}(\Omega )}} \lesssim  | {{\nabla
_{\mathcal{\widetilde{H}}_{\sigma}}}f}  |(\Omega )\  \;\forall \;f
\in \mathcal{BV}_{_{\mathcal{\widetilde{H}}_{\sigma}}}(\Omega )\] is
equivalent to the following isoperimetric inequality
\[  | E  |^{1-1/d}  \lesssim {{P_{\mathcal{\widetilde{H}}_{\sigma}}}(E,\Omega )}, \]
where $E$ is a bounded set with finite
${\mathcal{\widetilde{H}}_{\sigma}}$-perimeter in $\Omega$.

As an application, we further investigate the rank-one property for
$\mathcal{\widetilde{H}}_{\sigma} $-variations. The rank-one theorem
was first conjectured by L. Ambrosio and E. De Giorgi in \cite{GA}
(see also \cite{D,ACP}). This theorem is of great significance to
the application of vector variational problems (lower
semicontinuity, relaxation, approximation and integral
representation theorem, etc.) and partial differential equations. By
introducing new tools and using complex techniques in geometric
measure theory, G. Alberti first proved the rank-one theorem  in
\cite{A}. A simpler proof, based on the area formula and the
Reshetnyak continuity theorem, has been given in \cite{AA}.
Unfortunately, this proof works for particular BV functions only,
the monotone ones (gradients of locally bounded convex functions,
for instance). Two different proofs of rank-one theorem have been
found recently. One of them was proposed by G. De Philippis and F.
Rindler, who obtained a new proof from a profound PDEs result
\cite{DR}, and also proved a rank-one property for maps with bounded
deformation (BD) firstly. At the same time, A. Massaccesi and D.
Vittone in \cite{MV} provided another simpler proof of geometric
properties by virtue of the properties of subgraphs in Euclidean
space. Applying  properties related with the horizontal derivatives
of a real-valued function with bounded variation and its subgraph,
S. Don, A. Massaccesi and D. Vittone obtained a rank-one theorem for
the derivatives of vector-valued maps with bounded variation in a
class of Carnot groups $\mathbb G$ (see \cite{DMV}).

One of significant properties of BV functions in $\mathbb R^{d}$ is
that any BV function  can be characterized equivalently by its
subgraph  (cf. \cite{GMS}). In \cite{MV} and \cite{DMV}, such
subgraph  property is an important tool in the proof of the rank-one
theorem. In Section \ref{sec-4}, similar to the concept of
subgraph in $\mathbb R^{d}$, we introduce the definition of subgraph
of ${\mathcal{\widetilde{H}}_{\sigma}}$-BV function and  study the
subgraph properties of ${\mathcal{\widetilde{H}}_{\sigma}}$-BV
functions. We point out that in settings of Euclidean spaces
$\mathbb R^{d}$ and Carnot groups $\mathbb G$, for $\varphi\in
C^{1}_{c}(\Omega, \mathbb{R}^d)$, the integral of the divergence
$\text{div}\varphi$ is zero, i.e.,
\begin{equation}\label{eq-1.3}
    \int_{\Omega}\text{ div }\varphi(x)dx=0.
\end{equation}
However, due to the occurrence of the perturbation term
$\frac{\sigma{x_i}} {{{ { | x  |}^2}}}$ in
$\mathrm{div}_{\mathcal{\widetilde{H}}_{\sigma}}$, (\ref{eq-1.3})
does not hold for $\mathrm{div}_{\mathcal{\widetilde{H}}_{\sigma}}$.
We introduce the concept of
${\mathcal{\widetilde{H}}_{\sigma}}$-restricted variation and
perimeter to eliminate the influence of the potential part of the
operator ${{{\mathcal H}}_a}$ on the subgraph properties. Finally,
in Theorem \ref{subgraph-1}, we prove that a measurable function $u$
belongs to $\mathcal{B}
{\mathcal{V}^R_{{\mathcal{\widetilde{H}}_{\sigma}}}}(\Omega )$ if
and only if its subgraph is a set of finite ${{\mathcal
{\widetilde{H}}}_\sigma}$-restricted perimeter in ${\mathbb {R}^d}$.

In Section \ref{sec-5}, we devote ourselves to a closer analysis of the
distributional derivative of a
${\mathcal{\widetilde{H}}_{\sigma}}$-restricted BV function $u$. In
analogy with the results obtained in Euclidean space \cite{AFP} for
BV functions, we can write ${\nabla
_{\mathcal{\widetilde{H}}_{\sigma}}}u = \nabla
_{\mathcal{\widetilde{H}}_{\sigma}}^Au + \nabla _
{\mathcal{\widetilde{H}}_{\sigma}}^Su$, where $\nabla
_{\mathcal{\widetilde{H}}_{\sigma}}^Au$ is absolutely continuous
with respect to ${{{\mathcal L}}^d}$ and $\nabla
_{\mathcal{\widetilde{H}}_{\sigma}}^Su$ is singular with respect to
${{{\mathcal L}}^d}$. Based on the above decomposition of
derivatives, in Section \ref{sec-5}, the proof of rank-one theorem
related to $\mathcal{B} {\mathcal{V}^R
_{{\mathcal{\widetilde{H}}_{\sigma}}}} (\Omega )$ is given, see
Theorem \ref{ Rand-one }.

{\it Some notation}:
\begin{itemize}
    \item Throughout this article, we will use  $c$ and  $C$ to denote the
    positive constants, which are independent of main parameters and may
    be different at each occurrence.
    ${\mathsf U}\approx{\mathsf V}$ indicates that
    there is a constant $c>0$ such that $c^{-1}{\mathsf V}\le{\mathsf
        U}\le c{\mathsf V}$, whose right inequality is also written as
    ${\mathsf U}\lesssim{\mathsf V}$. Similarly, one writes ${\mathsf V}
    \gtrsim{\mathsf U}$ for ${\mathsf V}\ge c{\mathsf U}$.

    \item For convenience, the positive constant $C$
    may change from one line to another and this usually depends on the
    spatial dimension $d$, the indices $p$,
    and other fixed parameters.

    \item Let $ \mathbb{N}_0$ denote the non-negative integers. A $d$-dimensional
multi-index is a vector $\alpha\in \mathbb{N}^d_0$, meaning that,
$\alpha=(\alpha_1,\ldots,\alpha_d)$ for $\alpha_i\in \mathbb{N}_0$.
The derivative    of order $\alpha$ is defined by
$$\partial ^\alpha= \frac{\partial^{|\alpha| }}{\partial
x^{\alpha_1}_1\cdots\partial x^{\alpha_d}_d}$$
with $|\alpha|=\sum^d_{i=1}\alpha_i.$

    \item Let $\Omega\subset\mathbb{R}^{d}$ be an open set.  Throughout
    this article, we use ${C}(\Omega)$ to denote
    the space  of all continuous functions on $\Omega$.  Let $k
    \in\mathbb{N}\cup \{\infty\}$. The symbol  ${C}^{k}(\Omega)$
    denotes the class of all functions $f:\ \Omega \rightarrow
    \mathbb{R}$ with $k$ continuous partial derivatives. Denote by ${C}^
    {k}_{c}(\Omega)$  the class of all functions $f\in {C}^{k}
    (\Omega)$ with compact support.
\end{itemize}

\section{${{\mathcal H}_a}$-BV functions} \label{sec-2}

\subsection{Basic properties of $\mathcal{B} {\mathcal{V} _{{\mathcal
            {H} _a}}}(\Omega )$}\label{sec-2.1}
      \hspace{0.6cm}
In this section, we  introduce the ${{\mathcal
H}_a}$-BV space and investigate its properties.  The ${{\mathcal
H}_a}$-divergence of a vector-valued function
\[\varphi =({{\varphi _1},{\varphi _2}, \ldots ,{\varphi
_d}} ) \in C_c^1(\Omega ,{\mathbb R^d})\] is defined as
\[\mathrm{div}{_{\mathcal{H} _{a} }} \varphi ={A_{ - 1,a}}{\varphi
_1} +  \cdots  + {A_{ - d,a}}{\varphi _d}.\]
 By a simple computation, we have
\begin{eqnarray*}
{{\mathcal H}_a}:=-\mathrm{div}{_{{{{\mathcal H}}_a}}} (
{{\nabla_{{{{\mathcal H}}_a}}}u} ) &=&-\Big( {  \nabla -\sigma
\frac{x}{{{{ | x  |}^2}}}}\Big) \cdot \Big( {\nabla +\sigma
\frac{x}{{{{ |
x  |}^2}}}}\Big)u\\
& =& \Big( { - \Delta  - \sigma ( {d - 2 - \sigma } )\frac{1}{{{{ |
x  |}^2}}}} \Big)u \\
& =& \Big( { - \Delta +\frac{a}{{{{ | x  |}^2}}}} \Big)u,
\end{eqnarray*}
where
$$\sigma : = \frac{{d - 2}}{2} - \frac{1}{2}\sqrt {{{(d - 2)}^2} +
4a}.$$ Let $\Omega \subseteq \mathbb{R}^{d}$ be an open set. The
${{\mathcal H}_a}$-variation of $f \in {L^1}(\Omega )$ is defined by
\[ | {{\nabla _ {{{\mathcal H}_a}}}f}  |(\Omega ) :=
\mathop {\sup }\limits_{\varphi \in {\mathcal F(\Omega )}}  \left\{
{\int_\Omega  {f(x)\mathrm{div}{_{{{ \cal H}_a}}}\varphi (x)dx} }
 \right\},\] where ${\mathcal F}(\Omega )$ denotes the class of all
functions \[\varphi  = ( {{\varphi _1},{\varphi _2}, \ldots
,{\varphi _d}} ) \in C_c^1(\Omega ,{\mathbb R^d})\] satisfying
\[{ \| \varphi   \|_\infty } = \mathop {\sup }\limits_{x \in \Omega }
\Big\{( {{{ | {{\varphi _1}(x)}  |}^2} +  \cdots  + {{ | {{\varphi
_d}(x)}  |}^2}} )^{{1}/{2}}\Big\} \le 1.\]

An function $f \in {L^1}(\Omega )$ is said to have the ${{\mathcal
H}_a}$-bounded variation on $\Omega $ if
$$ | {{\nabla _{{{\mathcal H}_a}}}f}  |(\Omega ) < \infty ,$$
 and the collection of all such functions is
 denoted by $\mathcal{BV}_{{{\mathcal H}_a} }    ( \Omega    ) $.
 It follows from Remark \ref{remark} below that $\mathcal{BV}_{{{\mathcal H}_a} }(\Omega)$ is a Banach space with the norm \[ { \| f  \|_{{{\mathcal B}}
 {{{\mathcal V}}_{{{{\mathcal H}}_a}}}(\Omega )}} = { \| f  \|_{
 {L^{1}}(\Omega )}}+  | {{\nabla _{{{{\mathcal H}}_a}}}f}  |
 (\Omega ) . \]

For the sake of research, we give the definition of the   Sobolev
space associated with ${{\mathcal H}_a}$. \cite{KMVZZ} has also
studied the   Sobolev space defined in terms of the operator
 $({\mathcal H}_a)^{{s}/{2}}$ for  $0<s<1$.

\begin{definition}\label{def-Sobolev}
Suppose $\Omega$ is an open set in $\mathbb{R} ^{d}$ for $d \ge 2$.
Let $1 \le p \le \infty$. The Sobolev space $W_{{{\mathcal
H}_a}}^{k,p}(\Omega )$ associated with ${{\mathcal H}_a}$ is defined
as the set of all functions $f \in {L^p}(\Omega )$ such that
\[{A_{{j_1},a}} \cdots {A_{{j_m},a}}f \in {L^p}(\Omega),\quad 1
\le {j_1}, \ldots ,{j_m} \le d\ \&\ 1 \le m \le k.\] The norm of $f \in
W_{{{\mathcal H}_a}}^{k,p}(\Omega )$ is given by \[{ \| f
 \|_{W_{{{\cal H}_a}}^{k,p}}}: = \sum\limits_ {1 \le {j_1},
\ldots ,{j_m} \le d,1 \le m \le k} {{{ \| {{A_{{j_1},a}} \cdots
{A_{{j_m},a}}f}  \|}_{{L^p}(\Omega)}}}  + { \| f
\|_{{L^p}(\Omega)}},\] where $a \ge  - {( {{{\frac{d}{2}- 1}}}
)^2}$.
\end{definition}
 In what follows,  we will collect some properties of the space
$\mathcal{BV}_ {{\mathcal H}_a}(\Omega)$.

\begin{lemma}\label{semicontinuity}
   \item{{\rm (i)}} Suppose that $f \in W_{{{\mathcal H}_a}}^{1,1}
   (\Omega )$. Then
\begin{equation*}
 | {{\nabla _{{{{\mathcal H}}_a}}}f}  |(\Omega ) = \int_\Omega
{ |    {{\nabla _{{{{\mathcal H}}_a}}}f(x)}  |dx} .
\end{equation*}
  \item{{\rm (ii)}} (Lower semicontinuity) Suppose that ${f_k} \in {{\mathcal B}}
{{{\cal V}}_{{{{\mathcal H}}_a}}}(\Omega ),k \in \mathbb N$ and
${f_k} \to f$ in $ L_{loc}^1(\Omega ) $. Then \[ | {{\nabla
_{{{{\mathcal H}}_a}}}f}  |(\Omega ) \le \mathop {\lim \inf
}\limits_{k \to \infty }  | {{\nabla _{{{{\mathcal H} }_a}}}{f_k}}
|(\Omega ). \]
\end{lemma}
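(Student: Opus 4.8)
The two statements in Lemma~\ref{semicontinuity} are the natural analogues of the classical facts for $BV(\Omega)$ (cf. \cite[Section 5.1]{EG}), and the proofs run along the same lines, with only bookkeeping changes due to the presence of the drift terms $\sigma x_i/|x|^2$ in $A_{i,a}$ and $A_{-i,a}$.

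\textbf{Part (i).} The plan is to prove two inequalities. For ``$\le$'', let $f\in W^{1,1}_{\mathcal H_a}(\Omega)$ and let $\varphi=(\varphi_1,\dots,\varphi_d)\in\mathcal F(\Omega)$ be arbitrary. I would integrate by parts: since $A_{-i,a}=\partial_{x_i}-\sigma x_i/|x|^2$ is the formal adjoint of $A_{i,a}=\partial_{x_i}+\sigma x_i/|x|^2$ on $C^1_c(\Omega)$ (a one-line verification, as the drift term is symmetric and the $\partial_{x_i}$ picks up the sign), we get
\[
\int_\Omega f(x)\,\mathrm{div}_{\mathcal H_a}\varphi(x)\,dx
=\sum_{i=1}^d\int_\Omega f(x)\,A_{-i,a}\varphi_i(x)\,dx
=\sum_{i=1}^d\int_\Omega A_{i,a}f(x)\,\varphi_i(x)\,dx
=\int_\Omega \nabla_{\mathcal H_a}f\cdot\varphi\,dx.
\]
By Cauchy--Schwarz and $\|\varphi\|_\infty\le1$ this is $\le\int_\Omega|\nabla_{\mathcal H_a}f(x)|\,dx$; taking the supremum over $\varphi$ gives $|\nabla_{\mathcal H_a}f|(\Omega)\le\int_\Omega|\nabla_{\mathcal H_a}f(x)|\,dx$. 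For the reverse inequality one approximates the (measurable, $L^1$) vector field $\nabla_{\mathcal H_a}f/|\nabla_{\mathcal H_a}f|$ (set to $0$ where the gradient vanishes) by functions in $\mathcal F(\Omega)$ in the appropriate sense: mollify and truncate by a cutoff $\chi_n\in C^\infty_c(\Omega)$ with $0\le\chi_n\le1$, $\chi_n\uparrow\mathbf 1_\Omega$, so that the mollified fields stay in $C^1_c(\Omega,\mathbb R^d)$ with sup-norm $\le1$ and converge to $\nabla_{\mathcal H_a}f/|\nabla_{\mathcal H_a}f|$ in $L^1(\Omega;|\nabla_{\mathcal H_a}f|\,dx)$ — the extra drift factor $\sigma x_i/|x|^2$ is bounded on the compact support of $\chi_n$ (which avoids the origin since $\Omega\subset\mathbb R^d\setminus\{0\}$), so it causes no trouble. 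Feeding these fields into the supremum recovers $\int_\Omega|\nabla_{\mathcal H_a}f(x)|\,dx\le|\nabla_{\mathcal H_a}f|(\Omega)$.

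\textbf{Part (ii).} This is the standard Fatou/lower-semicontinuity argument. Fix any $\varphi\in\mathcal F(\Omega)$; since $\varphi$ has compact support $K\subset\Omega$ and $f_k\to f$ in $L^1_{loc}(\Omega)$, in particular $f_k\to f$ in $L^1(K)$, so
\[
\int_\Omega f(x)\,\mathrm{div}_{\mathcal H_a}\varphi(x)\,dx
=\lim_{k\to\infty}\int_\Omega f_k(x)\,\mathrm{div}_{\mathcal H_a}\varphi(x)\,dx
\le\liminf_{k\to\infty}|\nabla_{\mathcal H_a}f_k|(\Omega),
\]
using that $\mathrm{div}_{\mathcal H_a}\varphi\in L^\infty(K)$ (again the drift term is bounded on $K$ away from the origin) to justify passing the limit inside the integral, and that each $\int f_k\,\mathrm{div}_{\mathcal H_a}\varphi\le|\nabla_{\mathcal H_a}f_k|(\Omega)$ by definition of the $\mathcal H_a$-variation. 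Taking the supremum over $\varphi\in\mathcal F(\Omega)$ on the left yields the claim.

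\textbf{Main point to watch.} There is no serious obstacle here; the only thing that must be checked carefully is that all the drift-term contributions $\sigma x_i/|x|^2$ are handled on compact subsets of $\Omega$, where they are smooth and bounded because $\Omega$ stays away from the origin. This is exactly why the standing assumption $\Omega\subset\mathbb R^d\setminus\{0\}$ is in force, and it makes the Euclidean proofs transfer essentially verbatim.
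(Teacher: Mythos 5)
Your proof is correct and follows essentially the same route as the paper's: integration by parts plus Cauchy--Schwarz and the supremum over admissible fields for one inequality, approximation of the normalized field $\nabla_{\mathcal H_a}f/|\nabla_{\mathcal H_a}f|$ by truncated and mollified fields with sup-norm at most $1$ for the reverse, and the standard argument using boundedness of $\mathrm{div}_{\mathcal H_a}\varphi$ on the compact support (which stays away from the origin) for lower semicontinuity. The only slip is the sign in the integration by parts, since in fact $\int_\Omega f\,\mathrm{div}_{\mathcal H_a}\varphi\,dx=-\int_\Omega\nabla_{\mathcal H_a}f\cdot\varphi\,dx$, but this is harmless because the admissible class is symmetric under $\varphi\mapsto-\varphi$ (one simply tests with $-\nabla_{\mathcal H_a}f/|\nabla_{\mathcal H_a}f|$), and the paper's own write-up is equally casual on this point.
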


\begin{proof}
   (i) For every $\varphi\in C_c^{1}(\Omega,\mathbb R^{d})$ with
${ \| \varphi   \|_{{L^\infty }(\Omega )}} \le 1$, we have
\begin{align*}
     \left|\int_{\Omega}  f(x) \mathrm{div}_{\mathcal H_a} \varphi(x)  dx \right|
    & =  \left|\int_{\Omega} \nabla_{\mathcal H_a} f(x)\cdot \varphi(x)
    dx \right|\leq \int_{\Omega}|\nabla_{\mathcal H_a} f(x)| dx.
\end{align*}
By taking the supremum over $\varphi$, it is obvious that
$$ |
{{\nabla _{{{{\mathcal H}}_a}}}f}  |(\Omega ) \le \int_\Omega { |
{{\nabla _{{{{\mathcal H}}_a}}}f} (x) |dx}.$$

Define $\varphi\in L^\infty(\Omega,\mathbb R^{d})$ as follows:
\begin{equation} \label{eqq4}\varphi(x):=
    \left\{\begin{aligned}
        \frac{\nabla_{\mathcal H_a} f(x)}{|\nabla_{\mathcal H_a} f(x)|},\ &
        \text{if $x\in\Omega$ and $\nabla_{\mathcal H_a} f(x)\neq 0$,} \\
        0,\ & \text{otherwise.}
\end{aligned}\right.\end{equation}
It is easy to see  that ${ \| \varphi   \|_{{L^\infty }(\Omega )}}
\le 1$. Moreover,  we can obtain the approximating smooth fields
${\varphi _n} : = ( {{\varphi _{n,1}}, \ldots ,{\varphi _{n,d}}} )$
such that $\varphi_n\to \varphi$ pointwise  as $n\to\infty$, with
$\|\varphi_n\|_{L^\infty(\Omega)} \leq 1$ for all $n \in \mathbb N$.
In fact, for $m>0$, let
$$\phi_{m,i}(x):=\varphi_{i}(x)
\chi_{B(0,m)\cap\Omega}(x),\, x\in\Omega,$$ where $i=1,\ldots,d$ and
$\chi_{B(0,m)\cap\Omega}(x)$ is the characteristic function of
$B(0,m)\cap\Omega$. Then $\phi_{m,i}(x)\rightarrow\varphi_{i}(x)$ as
$m\rightarrow \infty$ for any  $x\in\Omega$. For any $\epsilon>0$,
there exists a sufficiently large $m_{0}$ such that
$$|\phi_{m_{0},i}(x)-\varphi_{i}(x)|<\epsilon/2.$$
 Choose
$\{\psi_{n}\}_{n>0}\subset C^{\infty}_{c}(\Omega)$ as an identity approximation and define
$$\varphi_{n,i}(x):=\psi_{n}\ast\phi_{m_{0},i}(x),\quad x\in\Omega.$$ Then
$\{\varphi_{n,i}\}_{n\in  \mathbb{N}} \subset C^\infty_c(\Omega)$
such that
$\lim\limits_{n\rightarrow\infty}\varphi_{n,i}(x)=\phi_{m_{0},i}(x)$
for any  $x\in\Omega$. This indicates that for the above
$\epsilon>0$, there exists $N>0$ such that for $n>N$,
$$|\varphi_{n,i}(x)-\phi_{m_{0},i}(x)|<\epsilon/2$$ and
$$|\varphi_{n,i}(x)-\varphi_{i}(x)|\leq |\phi_{m_{0},i}(x)-\varphi_{i}(x)|+|\varphi_{n,i}(x)-\phi_{m_{0},i}(x)|<\epsilon,$$
i.e., $\varphi_{n,i}\rightarrow \varphi_{i}$ pointwise as
$n\rightarrow \infty$, where $i=1,\ldots,d$. Then the desired
approximating smooth fields $\varphi_n$ can be obtained. Also, it
follows from the facts $\psi_{n}\in C^{\infty}_{c}(\Omega)$ and
$\|\varphi\|_{L^\infty(\Omega)}\leq 1$ that
$$|\varphi_{n}(x)|= \int_{\Omega}|\psi_{n}(y)|\cdot|\phi_{m,i}(x-y)|dy\le 1,$$
i.e., $\|\varphi_n\|_{L^\infty(\Omega)} \leq 1$. Combining the definition of $  | {{\nabla _{{{{\mathcal H}}_a}}}f}
|(\Omega )$ with integration by parts derives that for every $n\geq
1$,
\begin{align*}
     | {{\nabla _{{{{\mathcal H}}_a}}}}f  |(\Omega )
    &\ge \int_\Omega  {f(x)\mathrm{div}{_{{{{\mathcal H}}_a}}}{\varphi _n}(x)dx} \\
    &= \int_\Omega  {f(x) \left\{ {\Big(   \frac{\partial }{{\partial{x_1}}} - \sigma \frac{{{x_1}}}{{{{
        | x  |}^2}}}\Big){\varphi _{n,1}}(x) +  \cdots  + \Big(   \frac{\partial }{{\partial{x_d}}}
       -\sigma \frac{{{x_d}}}{{{{ | x  |}^2}}}\Big){\varphi _{n,d}}(x)}
        \right\}dx} \\
    &= -\int_\Omega  {{\nabla _{{{{\mathcal H}}_a}}}f(x)\cdot{\varphi _n}(x)dx}.
\end{align*}
Using the dominated convergence theorem and the definition of
$\varphi$ in (\ref{eqq4}), we have
\[ | {{\nabla _{{{{\mathcal H}}_a}}}f}  |(\Omega ) \ge \int_\Omega
{ | {{\nabla _{{{{\mathcal H}}_a}}}f} (x) |dx}  \] by letting $n\to
\infty$.

(ii) Fix $\varphi\in C^1_c(\Omega,\mathbb R^{d})$ with ${ \| \varphi
\|_{{L^\infty }(\Omega )}} \le 1$. By the definition  of $|
\nabla_{\mathcal{H} _a} f_k |(\Omega)$, we have
$$ | {{\nabla _{{{{\mathcal H}}_a}}}{f_k}}  |(\Omega ) \ge \int_
\Omega  {{f_k}(x)\mathrm{div}{_{{{{\mathcal H}}_a}}}\varphi (x)dx}.$$
Since $\varphi\in C^1_c(\Omega,\mathbb R^{d})$ and $0\notin \Omega$, there exists a constant $c>0$ such that $|x|>c$ for $x\in \text{supp }\varphi$, which gives
$$|\mathrm{div}{_{{{{\mathcal H}}_a}}}\varphi(x)|\leq \sum^{d}_{i=1}\Big\{|\frac{\partial\varphi}{\partial x_{i}}(x)|+|\sigma|\frac{|x_{i}|}{|x|^{\alpha}}|\varphi(x)|\Big\}\leq C.$$
This indicates that
\begin{eqnarray*}
\left|\int_ \Omega {{f_k}(x)\mathrm{div}{_{{{{\mathcal
H}}_a}}}\varphi (x)dx}-\int_\Omega {{f}(x)\mathrm{div}{_{{{{\mathcal
H}}_a}}} \varphi (x)dx} \right|\leq
C\int_{\Omega}|f_{k}(x)-f({x})|dx,
\end{eqnarray*}
which, together with the convergence of ${ \{ {{f_k}}  \}_{k \in \mathbb{N} }}$ to $f$ in
$L_{loc}^1(\Omega ) $, implies that
\[\mathop {\lim \inf } \limits_{k \to \infty }  | {{\nabla
_{{{{\mathcal H}}_a}}}{f_k}}   |(\Omega ) \ge \lim_{k \to \infty}
\int_ \Omega {{f_k}(x)\mathrm{div}{_{{{{\mathcal H}}_a}}}\varphi
(x)dx}=\int_\Omega {{f}(x)\mathrm{div}{_{{{{\mathcal H}}_a}}}
\varphi (x)dx} .\] Therefore, (ii) can be proved by the definition
of $  | {{\nabla _ {{{{\mathcal H}}_a}}}f}  |(\Omega )$ and the
arbitrariness of such functions $\varphi$.
\end{proof}

\begin{remark}\label{remark}
The space $(\mathcal{BV}_{\mathcal{H}_a}(\Omega),{ \|  \cdot
  \|_{\mathcal B{\mathcal V_{{\mathcal H_a}}}(\Omega )}})$ is a
Banach space. Firstly, it is easy to check  that ${ \|  \cdot
 \|_{\mathcal B{\mathcal V_ {{\mathcal H_a}}}(\Omega )}}$ is a
norm. Secondly,  let $ \{ {{f_n}}  \}_ {n\in\mathbb N}\subset
\mathcal{BV}_{\mathcal H_a}(\Omega)$ be a Cauchy sequence. Then $ \{
{{f_n}}  \}_{n\in\mathbb N}$ is a Cauchy sequence in the Banach
space $L^1(\mathbb R^d)$. Finally, using the lower semicontinuity of
${{\mathcal H}_a}$-BV functions (cf.  Lemma \ref{semicontinuity}),
we obtain the desired proof.
\end{remark}

The following lemma gives the structure theorem for ${{\mathcal
H}_a}$-BV functions and it can be proved by the Hahn-Banach theorem
and the Riesz representation theorem in \cite {AFP}.
\begin{lemma}\label{Structure}
     Let $u \in {{\mathcal B}}
    {{{\mathcal V}}_{{{{\mathcal H}}_a}}}(\Omega )$. There
    exists a unique $\mathbb R^{d}$-valued finite Radon measure ${\mu _
    {{\mathcal{H} _{a}}}}$ such that \[\int_\Omega  u (x)\mathrm{div}
    {_{{\mathcal{H}_{a}}}}\varphi (x)dx = \int_\Omega  \varphi  (x)
    \cdot d{\mu _{{\mathcal{H} {a}}}}(x)\] for any $\varphi\in C^{
    \infty}_c(\Omega, \mathbb R^{d})$ and \[ | {\nabla _{{
    \mathcal{H} _a}}}u | (\Omega ) = |{\mu _{{\mathcal{H} _{a}}}
    }|(\Omega ),\]
    where $|{\mu _{{{{\mathcal H}}_a}}}|$ is the total variation of
    the measure ${\mu _{{{{\mathcal H}}_a}}}$.
\end{lemma}

Combining the mean value theorem of multivariate functions with an
additional condition (\ref{eq4}), we can obtain the following
approximation result for the ${{\mathcal H}_a}$-variation. It should
be noted that the condition (\ref{eq4}) is added due to the
singularity of the perturbation term $\sigma\frac{x_i} {{{ { | x
 |}^2}}}$ in $\mathrm{div}_{\mathcal{ {H}}_{a}}$ and
we explain the relation between  the condition (\ref{eq4}) and the
Hardy-Sobolev inequality.

\begin{theorem}\label{th2.5}
    Let $\Omega\subset{\mathbb{R} ^d}$ be an open and bounded domain.
    Assume that $u \in {{\mathcal B}}
    {{{\mathcal V}}_{{{{\mathcal H}}_a}}}(\Omega )$ with
    \begin{equation}
        \label{eq4} {\int_\Omega  { | {u(y)}  | {{ | y
      |}^{ - 2}}dy}}<\infty,
    \end{equation}
    then there exists a sequence ${ \{ {{u_h}}  \}_
    {h \in \mathbb{N} }} \in {{\mathcal B}}{{{\mathcal V}}_{{{{\mathcal H}}_a}}}
    (\Omega ) \cap C_c^\infty (\Omega ) $ such that $$\mathop {\lim }\limits_{h
    \to \infty }{ \| {{u_h} - u}  \|_{{L^1}(\Omega)}} = 0$$ and
    \[\mathop {\lim }\limits_{h \to \infty } \int_\Omega  { | {{\nabla _{{{
    {\mathcal H}}_a}}}{u_h}(x)}  |dx}  =  | {{\nabla _{{{{\mathcal H}}
    _a}}}u} |(\Omega ).\]
\end{theorem}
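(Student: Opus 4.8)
\textit{Strategy.} I would follow the classical $BV$ approximation theorem (cf.\ \cite[Theorem~2, \S5.2.2]{EG})---mollify $u$ piece by piece against a partition of unity and add up---the only genuinely new feature being the singular zeroth-order term $\sigma x_{i}/|x|^{2}$ concealed in $\mathrm{div}_{\mathcal{H}_a}$ and $\nabla_{\mathcal{H}_a}$, which is exactly what forces the extra hypothesis (\ref{eq4}). Two elementary identities keep the bookkeeping clean: although $A_{i,a}$ is not a derivation, it obeys the exact Leibniz rule $A_{i,a}(\zeta u)=\zeta\,A_{i,a}u+u\,\partial_{x_{i}}\zeta$ for $\zeta\in C^{\infty}$, hence $\mathrm{div}_{\mathcal{H}_a}(\zeta\Phi)=\zeta\,\mathrm{div}_{\mathcal{H}_a}\Phi+\nabla\zeta\cdot\Phi$; and $\mathrm{div}_{\mathcal{H}_a}\Phi=\mathrm{div}\,\Phi-\sigma(x\cdot\Phi)/|x|^{2}$, which separates the classical divergence from the singular correction.

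\textit{Set-up.} Since $0\notin\Omega$ and $\Omega$ is bounded, fix open sets $\Omega_{1}\subset\overline{\Omega_{1}}\subset\Omega_{2}\subset\cdots$ exhausting $\Omega$, each $\overline{\Omega_{j}}$ compact in $\Omega$ and contained in $\{|x|>1/j\}$, together with a locally finite open cover of $\Omega$ refining $\{\Omega_{j+1}\setminus\overline{\Omega_{j-1}}\}_{j}$ and a subordinate partition of unity $\{\zeta_{k}\}\subset C_{c}^{\infty}(\Omega)$ with $0\le\zeta_{k}\le1$, $\sum_{k}\zeta_{k}\equiv1$ on $\Omega$ and bounded overlap; in particular each $\mathrm{supp}\,\zeta_{k}$ is compact in $\Omega$, hence at positive distance from $\partial\Omega\cup\{0\}$. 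Fix a radial mollifier $\eta$ and $\varepsilon>0$; after re-indexing we may assume $|\nabla_{\mathcal{H}_a}u|(\Omega\setminus\Omega_{1})<\varepsilon$. For each $k$ choose $\varepsilon_{k}>0$ with $2\varepsilon_{k}<\mathrm{dist}(\mathrm{supp}\,\zeta_{k},\partial\Omega\cup\{0\})$ and so small that
\[
\|\eta_{\varepsilon_{k}}*(\zeta_{k}u)-\zeta_{k}u\|_{L^{1}(\Omega)}<\tfrac{\varepsilon}{2^{k}},\qquad
\|\eta_{\varepsilon_{k}}*(u\nabla\zeta_{k})-u\nabla\zeta_{k}\|_{L^{1}(\Omega)}<\tfrac{\varepsilon}{2^{k}},\qquad
\varepsilon_{k}\!\int_{\Omega}\!\zeta_{k}|u|\,|x|^{-2}\,dx<\tfrac{\varepsilon}{2^{k}(1+|\sigma|)} .
\]
The third requirement makes sense term by term because $\mathrm{supp}\,\zeta_{k}$ avoids $0$, but it is the \emph{global} bound $\int_{\Omega}|u|\,|x|^{-2}\,dx<\infty$, i.e.\ (\ref{eq4}) (supplied by the Hardy--Sobolev inequality \cite{CKN84}), that will let us conclude that the commutator term below is negligible and that $u_{\varepsilon}\in W^{1,1}_{\mathcal{H}_a}(\Omega)$. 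Put $u_{\varepsilon}:=\sum_{k}\eta_{\varepsilon_{k}}*(\zeta_{k}u)$, a locally finite sum of functions in $C_{c}^{\infty}(\Omega)$, hence $u_{\varepsilon}\in C^{\infty}(\Omega)$, with $\|u_{\varepsilon}-u\|_{L^{1}(\Omega)}<\varepsilon$.

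\textit{Key estimate.} Fix $\varphi\in C_{c}^{1}(\Omega,\mathbb{R}^{d})$ with $\|\varphi\|_{\infty}\le1$ and set $\psi_{k}:=\eta_{\varepsilon_{k}}*\varphi$. Moving mollifiers and using the two Leibniz identities yields the exact decomposition
\[
\int_{\Omega}u_{\varepsilon}\,\mathrm{div}_{\mathcal{H}_a}\varphi\,dx
=\sum_{k}\int_{\Omega}u\,\mathrm{div}_{\mathcal{H}_a}(\zeta_{k}\psi_{k})\,dx
-\sum_{k}\int_{\Omega}u\,\nabla\zeta_{k}\cdot\psi_{k}\,dx
-\sigma\sum_{k}\int_{\Omega}\zeta_{k}u\,R_{k}\,dx,
\]
where $R_{k}(x)=\int\eta_{\varepsilon_{k}}(x-y)\,\varphi(y)\cdot\big(x/|x|^{2}-y/|y|^{2}\big)\,dy$ is the commutator of mollification with multiplication by $x/|x|^{2}$. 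The three sums are handled as follows. (i) Since $\|\zeta_{k}\psi_{k}\|_{\infty}\le1$ and $\zeta_{k}\psi_{k}\in C_{c}^{\infty}(\Omega,\mathbb{R}^{d})$ is supported in $\mathrm{supp}\,\zeta_{k}$, the structure theorem (Lemma~\ref{Structure}) gives $\int_{\Omega}u\,\mathrm{div}_{\mathcal{H}_a}(\zeta_{k}\psi_{k})\,dx\le|\nabla_{\mathcal{H}_a}u|(\mathrm{supp}\,\zeta_{k})$, and bounded overlap together with $|\nabla_{\mathcal{H}_a}u|(\Omega\setminus\Omega_{1})<\varepsilon$ bounds the first sum by $|\nabla_{\mathcal{H}_a}u|(\Omega)+C\varepsilon$. (ii) Because $\sum_{k}\nabla\zeta_{k}\equiv0$, the second sum equals $\sum_{k}\int_{\Omega}\big(\eta_{\varepsilon_{k}}*(u\nabla\zeta_{k})-u\nabla\zeta_{k}\big)\cdot\varphi\,dx$, of absolute value $<\varepsilon$ by the second requirement. (iii) For $x\in\mathrm{supp}\,\zeta_{k}$ and $|x-y|\le\varepsilon_{k}<\tfrac12|x|$ the mean value theorem applied to $z\mapsto z/|z|^{2}$ gives $\big|x/|x|^{2}-y/|y|^{2}\big|\lesssim\varepsilon_{k}/|x|^{2}$, hence $|R_{k}(x)|\lesssim\varepsilon_{k}/|x|^{2}$ there, so the third sum is $\lesssim|\sigma|\sum_{k}\varepsilon_{k}\int_{\Omega}\zeta_{k}|u|\,|x|^{-2}\,dx<\varepsilon$. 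Adding up, $\int_{\Omega}u_{\varepsilon}\,\mathrm{div}_{\mathcal{H}_a}\varphi\,dx\le|\nabla_{\mathcal{H}_a}u|(\Omega)+C\varepsilon$ for every admissible $\varphi$, whence $|\nabla_{\mathcal{H}_a}u_{\varepsilon}|(\Omega)\le|\nabla_{\mathcal{H}_a}u|(\Omega)+C\varepsilon<\infty$ and $u_{\varepsilon}\in\mathcal{BV}_{\mathcal{H}_a}(\Omega)$. Since $u_{\varepsilon}\in C^{\infty}(\Omega)$ and, again by (\ref{eq4}), $u_{\varepsilon}/|x|\in L^{1}(\Omega)$ (so the potential part of $\nabla_{\mathcal{H}_a}u_{\varepsilon}$ is integrable and $u_{\varepsilon}\in W^{1,1}_{\mathcal{H}_a}(\Omega)$), Lemma~\ref{semicontinuity}(i) upgrades the last bound to $\int_{\Omega}|\nabla_{\mathcal{H}_a}u_{\varepsilon}(x)|\,dx\le|\nabla_{\mathcal{H}_a}u|(\Omega)+C\varepsilon$.

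\textit{Conclusion and the main obstacle.} Taking $\varepsilon=1/h$ and $u_{h}:=u_{1/h}$ gives $u_{h}\to u$ in $L^{1}(\Omega)$ and $\limsup_{h}\int_{\Omega}|\nabla_{\mathcal{H}_a}u_{h}|\,dx\le|\nabla_{\mathcal{H}_a}u|(\Omega)$; combined with the lower semicontinuity of Lemma~\ref{semicontinuity}(ii) this forces $\int_{\Omega}|\nabla_{\mathcal{H}_a}u_{h}(x)|\,dx\to|\nabla_{\mathcal{H}_a}u|(\Omega)$, which with the $L^{1}$ convergence is the assertion (the $u_{h}$ being smooth on $\Omega$, exactly as in the classical statement \cite[Theorem~2, \S5.2.2]{EG}). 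The one delicate point, and the reason (\ref{eq4}) cannot be dropped, is step (iii): the singular weight $x/|x|^{2}$ does not commute with mollification, and controlling the commutator $R_{k}$ uniformly up to the (possibly boundary) origin requires precisely the quantitative integrability $\int_{\Omega}|u|\,|x|^{-2}\,dx<\infty$; every other ingredient is the standard $BV$ machinery, adjusted only for the exact Leibniz rules of $A_{i,a}$ and for the fact that, unlike in $\mathbb{R}^{d}$, $\int_{\Omega}\mathrm{div}_{\mathcal{H}_a}\varphi\,dx\neq0$ in general.
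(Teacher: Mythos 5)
Your proposal is correct and follows essentially the same route as the paper's proof: an exhausting sequence with a subordinate partition of unity, piecewise mollification with piece-dependent $\varepsilon_k$, the identity $\sum_k\nabla\zeta_k\equiv 0$ to control the error from the classical divergence part, and a mean value theorem estimate for the commutator of mollification with $x/|x|^{2}$, with the hypothesis (\ref{eq4}) entering exactly where the paper uses it. The only differences are bookkeeping (you state the Leibniz rule for $A_{i,a}$ explicitly and absorb the smallness $\varepsilon_k\int_\Omega\zeta_k|u|\,|x|^{-2}dx<\varepsilon 2^{-k}$ into the choice of $\varepsilon_k$, whereas the paper factors out $\varepsilon_j$ and invokes (\ref{eq4}) globally in its estimate of $J_2$), and, like the paper's own construction, your $u_h$ lie in $C^{\infty}(\Omega)$ rather than $C_c^{\infty}(\Omega)$.
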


\begin{proof}
The following proof   is similar to that of
\cite[Section 5.2.2, Theorem 2]{EG}, but different from  its proof  we need to
use the mean value theorem of multivariate functions and the
condition (\ref{eq4}).

Via the lower semicontinuity of  ${{\mathcal H}_a}$-BV functions, it
suffices to show that for $\varepsilon
> 0$, there exists a function ${u_\varepsilon } \in {C^\infty }(\Omega )$
such that
\begin{equation*}
\int_\Omega  { | {{u_\varepsilon }(x) - u(x)}  |dx}  < \varepsilon
\end{equation*}
and
\begin{equation*}
 | {{\nabla _{{{{\cal H}}_a}}}{u_\varepsilon }}  |(\Omega ) \le
 | {{\nabla _{{{{\cal H}}_a}}}u}  |(\Omega ) + \varepsilon.
\end{equation*}

Fix $\varepsilon  > 0$. Given a positive integer $m$, let \[{\Omega
_j}: =  \Big\{ { {x \in \Omega }:\ \mathrm{dist}(x,\partial \Omega ) >
\frac{1}{{m + j}}}  \Big\} \cap B(0,m + j),\ \ j\in \mathbb{N},
\] where $\mathrm{dist}(x,\partial \Omega )=\inf\{|x-y|:\ y\in \partial \Omega \}$. In fact, let ${ \{ {{\Omega _j}}  \}_{j \in \mathbb{N} }}$
be a sequence of subdomains of $\Omega $ such that ${\Omega _j} \subset {\Omega
_{j + 1}} \subset \Omega $, $j \in \mathbb{N} $ and $\mathop  \cup
\limits_ {j = 0}^\infty  {\Omega _j} = \Omega$. Since $ | {{\nabla
_{{{{\cal H}}_ a}}}u}  |(\cdot ) $ is a measure, we can choose a
$m\in \mathbb{N}$ so large that
\begin{equation}\label{eq-2.2}
 | {{\nabla _{{{{\cal H}}_a}}}u}  |(\Omega \backslash { \Omega
_0}) < \varepsilon .
\end{equation}

Let ${U_0}: = {\Omega _0}$ and ${U_j}: = {\Omega _{j + 1}}\backslash
{{\bar \Omega }_{j - 1}} $ for $j \ge 1$. Following the proof
of \cite[Section 5.2.2, Theorem 2]{EG}, we conclude that there is a
partition of unity subordinate to the covering ${ \{ {{U_j}} \}_{j
\in \mathbb N}} $. Thus, there exist functions ${ \{ {{f_j}} \}_{j
\in \mathbb N}} \in C_c^\infty ({U_j})$ such that $0 \le {f_j} \le
1$, $j \ge 0$ and $\sum\limits_{j = 0}^\infty  {{f_j}} = 1$ on
$\Omega$.

Given $\varepsilon  > 0$ and $u \in {L^1}(\Omega)$,
extended to zero out of $\Omega $, we define the usual
regularization \[{u_\varepsilon}(x): = \frac{1}{{{\varepsilon ^d}}}
\int_{B(x,\varepsilon )} {\eta (\frac{{x - y}}{\varepsilon
})u(y)dy},\] where $\eta  \in C_c^\infty ({\mathbb{R}^d})$ is a
nonnegative radial  function satisfying $\int_{{\mathbb{R} ^d}}{\eta
(x)dx}  = 1$ and ${\rm{supp}}\eta \subset B(0,1)$. Then for each
$j$, there exists $0 < {\varepsilon _j} < \varepsilon $, such that
\begin{equation}\label{eq-2.1}
     \left\{
    \begin{aligned}
    \hspace{-0.5cm}
       {{\rm{supp}}( {{{( {{f_j}u} )}_{{\varepsilon _j}}}}
        ) \subseteq {U_j},}\\
       {\int_\Omega  { | {{{({f_j}u)}_{{\varepsilon _j}}}(x) - {f_j}u}(x)
        |dx}  < \varepsilon {2^{ - (j + 1)}},}\\
       {\;\int_\Omega  { | {{{(u\nabla {f_j})}_{{\varepsilon _j}}}(x)  -
        u\nabla {f_j}} (x)  |dx}  < \varepsilon {2^{ - (j + 1)}}.}
    \end{aligned}
\right.\end{equation}

Define \[{v_\varepsilon }(x): = \sum\limits_{j = 0}^\infty
{{{(u{f_j})}_ {{\varepsilon _j}}}}(x).\] Clearly, ${v_\varepsilon }
\in {C^\infty }(\Omega )$ and $u = \sum\limits_{j = 0}^\infty
{u{f_j}}$. Therefore, by a simple computation, we can get
\[{ \| {{v_\varepsilon } - u}  \|_ {{L^1}(\Omega )}} \le
\sum\limits_{j = 0}^\infty  {\int_\Omega  {|{{( {{f_j}u}
)}_{{\varepsilon _j}}}(x) - {f_j}(x)u(x)|dx < \varepsilon } }. \]
Consequently, $ {v_\varepsilon } \to u$ in
${L^1}(\Omega )$ as $\varepsilon  \to 0$.  Now, let $\varphi  \in C_c^1(\Omega, {\mathbb{R}
^d})$ satisfying $ | \varphi   | \le 1$. Then
\begin{align*}
    &\int_\Omega  {{v_\varepsilon }(x)\mathrm{div}{_{{{{\mathcal H}}_a}
    }}\varphi (x)dx}\\
    &= \int_\Omega  {\Big(\sum\limits_{j = 0}^\infty  {{{(u{f_j})}_
    {{\varepsilon _j}}}}(x) \Big)\mathrm{div}{_{{{{\cal H}}_a}}}\varphi (x)dx} \\
    &= \sum\limits_{j = 0}^\infty  {\int_\Omega  {{{(u{f_j})}_{{
    \varepsilon _j}}}(x)\Big({A_{ - 1,a}}{\varphi _1}(x) + {A_{ - 2,a}}{\varphi _2}(x) +
    \cdots  + {A_{ - d,a}}{\varphi _d}(x)\Big)dx} } \\
    &: = I + II,
\end{align*}
where
$$\left\{\begin{aligned}
I&:= \sum\limits_{j = 0}^\infty  {\int_\Omega
{{{(u{f_j})}_{{\varepsilon _
j}}}(x)\Big(\frac{\partial}{\partial{x_1}}{\varphi _1}(x) +
\frac{\partial}{\partial x_2}{\varphi _2}(x) +  \cdots  +
\frac{\partial}{\partial x_d}{\varphi _d}(x)\Big)dx} };\\
II&:=-\sum\limits_{j = 0}^\infty  {\int_\Omega
{{{(u{f_j})}_{{\varepsilon _
    j}}}(x)\Big(\frac{{\sigma {x_1}}}{{{{ | x  |}^2}}}{\varphi _1}(x) +
    \frac{{\sigma {x_2}}}{{{{ | x  |}^2}}}{\varphi _2} (x)+  \cdots  +
    \frac{{\sigma {x_d}}}{{{{ | x  |}^2}}}{\varphi _d}(x)\Big)dx} }.
\end{aligned}\right.$$

As for $I$, we   get
\begin{align*}
    I &=   \sum\limits_{j = 0}^\infty  {\int_\Omega  {{{(u{f_j})}_{{\varepsilon _j
    }}}(x)\mathrm{div}\varphi(x) dx} } \\
    &=   \sum\limits_{j = 0}^\infty  {\int_\Omega  {(u{f_j})(y)\mathrm{div}({\eta _
    {{\varepsilon _j}}} * \varphi (y))dy} } \\
    &=    {\sum\limits_{j = 0}^\infty  {\int_\Omega  {u(y)\mathrm{div}({f_j}({\eta _
    {{\varepsilon _j}}} * \varphi ))(y)dy - } } \sum\limits_{j = 0}^\infty  {\int_
    \Omega  {u(y)\nabla ({f_j}({\eta _{{\varepsilon _j}}} * \varphi ))(y)dy} } }
      \\
    &=    {\sum\limits_{j = 0}^\infty  {\int_\Omega  {u(y){\rm{div}}({f_j}({\eta _
    {{\varepsilon _j}}}*\varphi ))(y)dy - } } \sum\limits_{j = 0}^\infty  {\int_\Omega
    {\varphi(y)  \Big( {{\eta _{{\varepsilon _j}}}*(u\nabla {f_j})(y) - u\nabla {f_j}}
     (y)\Big)dy} } }   \\
    &: = {I_1} + {I_2},
\end{align*}
where in the last equality we have used the fact that $\sum\limits_{j = 0}^\infty  {\nabla
{f_j}}(x)  = 0$ on $\Omega $. Actually, when ${
\| \varphi    \|_{{L^\infty }}} \le 1$, it holds that $ |
{{f_j}({\eta _{{\varepsilon _j}}}
* \varphi )} (x) | \le 1,\ j \in \mathbb{N} ,$ and each point in $\Omega $ belongs
to at most three of the sets $ \{ {{U_j}}  \}_0^\infty $. Moreover,
it follows from (\ref {eq-2.1}) that $ | {{I_2}}
 | < \varepsilon $.

As for $II$, we  change the order of integration to get
\begin{eqnarray*}
II &=&-\sum\limits_{j = 0}^{\infty}\int_{\Omega}
{{{(u{f_j})}_{{\varepsilon _ j}}}(x)\left\{\frac{{\sigma {x_1}}}{{{{
| x  |}^2}}}{\varphi _1}(x) + \frac{{\sigma {x_2}}}{{{{ | x
|}^2}}}{\varphi _2}(x) +  \cdots +
\frac{{\sigma {x_d}}}{{{{ | x  |}^2}}}{\varphi _d}(x)\right\}dx}  \\
&=& -\sum\limits_{j = 0}^\infty  {\int_\Omega  {\int_\Omega
{\frac{1}{{\varepsilon _j^d}}\eta ( {\frac{{x - y}}{{{\varepsilon
_j}}}})u(y){f_j}(y) \left( {\sum\limits_{k = 1}^d {\frac{{\sigma
{y_k}}}
{{{{ | y  |}^2}}}{\varphi _k}(x)} }  \right)dy} } dx} \\
    &&-\sum\limits_{j = 0}^{\infty}\int_{\Omega}\int_\Omega  {\frac{1}
    {{{\varepsilon _j^d}}}\eta ( {\frac{{x - y}}{{{\varepsilon _j}}}}
    )u(y){f_j}(y) \left\{ {\sum\limits_{k = 1}^d {\left( {\frac{{
    \sigma {x_k}}}{{{{ | x  |}^2}}} - \frac{{\sigma {y_k}}}{{{{
     | y  |}^2}}}} \right){\varphi _k}(x)} }  \right\}dy} dx  \\
    &=& -\sum\limits_{j = 0}^\infty  {\int_\Omega  {u(y){f_j}(y) \left( \sum
    \limits_{k = 1}^{d} \frac{\sigma {y_k}}{{{{ | y  |}^2}}}{{\varphi _k}\ast {\eta _{{\varepsilon _j}}}(y)}   \right)dy}
    } \\
    &&-\sum\limits_{j = 0}^\infty  {\int_\Omega  {\int_\Omega  {\frac{1}{{
    {\varepsilon _j^d}}}\eta ( {\frac{{x - y}}{{{\varepsilon _j}}}}
    )u(y){f_j}(y) \left\{ {\sum\limits_{k = 1}^d {\left( {\frac{{\sigma
    {x_k}}}{{{{ | x  |}^2}}} - \frac{{\sigma {y_k}}}{{{{ | y
     |}^2}}}} \right){\varphi _k}(x)} }  \right\}dy} dx} }.
\end{eqnarray*}

Consequently, the above estimate of the term ${I_2}$ shows that
\[ \left| {\int_\Omega  {{v_\varepsilon }(x)\mathrm{div}{_{{{{\mathcal H}}_a
}}}\varphi (x)dx} }  \right|{\rm{ = }} | {{I_1} + {I_2} + II}| \le {J_1}
+ {J_2} + \varepsilon , \]
 where
\[{J_1} :=  \left| {  \sum\limits_{j = 0}^\infty  {\int_\Omega  {u(y)\mathrm{div}({f_j}
({\eta _{{\varepsilon _j}}} * \varphi ))dy} }  - \sum\limits_{j =
0}^\infty {\int_\Omega  {u(y){f_j}(y) \left( {\sum\limits_{k = 1}^d
{\frac{{\sigma {y_k}}}{{{{ | y  |}^2}}}( {{\varphi _k} * {\eta _{{
\varepsilon _j}}}(y)} )} }  \right)dy} } }  \right|\]
 and
\[{J_2} :=  \left|- {\sum\limits_{j = 0}^\infty  {\int_\Omega  {\int_\Omega
{\frac{1}{{{\varepsilon _j^d}}}\eta ( {\frac{{x - y}}{{{\varepsilon
_j}}}} )u(y){f_j}(y) \left( {\sum\limits_{k = 1}^d {( {\frac{{\sigma
{x_k} }}{{{{ | x  |}^2}}} - \frac{{\sigma {y_k}}}{{{{ | y
 |}^2}} }} ){\varphi _k}(x)} }  \right)dy} dx} } }  \right|. \]

Moreover,
\begin{align*}
    {J_1}
    &=  \left| {   \sum\limits_{j = 0}^\infty  {\int_\Omega  {u(y)\mathrm{div}({f_j}
    ({\eta _{{\varepsilon _j}}} * \varphi ))dy} }  -\sum\limits_{j = 0}^
    \infty  {\int_\Omega  {u(y){f_j}(y) \left( {\sum\limits_{k = 1}^d {\frac
    {{\sigma {y_k}}}{{{{ | y  |}^2}}}( {{\varphi _k} *
    {\eta _{{\varepsilon _j}}}(y)} )} }  \right)dy} } }  \right|\\
    &\le  \left| {  \int_\Omega  {u(y)\mathrm{div}({f_0}({\eta _{{\varepsilon _0}}} *
    \varphi ))dy}  - \int_\Omega  {u(y){f_0}(y) \left( {\sum\limits_{k = 1}^d
    {\frac{{\sigma {y_k}}}{{{{ | y  |}^2}}}( {{\varphi _k} *
    {\eta _{{\varepsilon _0}}}(y)} )} } \right)dy} }  \right|\\
    &+  \left| {   \sum\limits_{j = 1}^\infty  {\int_\Omega  {u(y)\mathrm{div}
    ({f_j}({\eta _{{\varepsilon _j}}} * \varphi ))dy} }  - \sum\limits_
    {j = 1}^\infty  {\int_\Omega  {u(y){f_j}(y) \left({\sum\limits_{k = 1}^d
    {\frac{{\sigma {y_k}}}{{{{ | y  |}^2}}}( {{\varphi _k} *
    {\eta _{{\varepsilon _j}}}(y)} )} }  \right)dy} } }  \right|\\
    &\le | {{\nabla _{{{\mathcal H}_a}}}u}  |(\Omega ) +
    \sum\limits_{j = 1}^\infty  { | {{\nabla _{{{\mathcal H}_a}}}u}
     |({U_j})} \\
    &\le  | {{\nabla _{{{\mathcal H}_a}}}u}  |(\Omega ) +
     | {{\nabla _{{{\mathcal H}_a}}}u}  |(\Omega \backslash
    {\Omega _0})\\
    &\le  | {{\nabla _{{{\mathcal H}_a}}}u}  |(\Omega ) +
    3\varepsilon,
\end{align*} where we have used the fact (\ref{eq-2.2}) in the last
inequality. Note that $\psi (x) = {x_k}{ | x  |^{ - 2}}$, ${ \|
\varphi \|_{{L^\infty }}} \le 1$ and ${\rm{supp}}(\eta ) \subseteq
B(0,1)$. When $ | {x - y}  | < {\varepsilon _j} < {{ | y
|}}/{2}$, by the mean value theorem of multivariate functions, there
exists $\theta \in (0,1)$ such that
\[ | {\psi (x) - \psi (y)}  | = { | {y + \theta(x - y)}  |^{ - 2}}
 | {x - y}  | \le C | {x - y}  |{ | y  |^{ - 2}}.
\] Then, we obtain
\begin{align*}
{J_2}&=  \left| -{\sum\limits_{j = 0}^\infty  {\int_\Omega  {\int_\Omega
{\frac{1}{{\varepsilon _j^d}}\eta ( {\frac{{x - y}}{{{\varepsilon _j}
        }}} )u(y){f_j}(y) \left\{ {\sum\limits_{k = 1}^d {\left( {\frac{{
        \sigma {x_k}}}{{{{ | x  |}^2}}} - \frac{{\sigma {y_k}}}{{{{
         | y  |}^2}}}} \right){\varphi _k}(x)} }  \right\}dy} dx} } }  \right|\\
        &\le  |\sigma|\sum\limits_{j = 0}^\infty  {\int_\Omega
        {\int_\Omega  { \left| {\frac{1}{{\varepsilon _j^d}}\eta ( {\frac{
        {x - y}}{{{\varepsilon _j}}}} )u(y){f_j}(y)}  \right| \left\{ {\sum
        \limits_{k = 1}^{d} \frac{|x - y|}{|y + \theta(x - y)|^{2}} | {{\varphi _k}(x)}  | }  \right\}dy} dx} } \\
        &\le  |\sigma|\sum\limits_{j = 0}^\infty  {\int_\Omega
        {\int_\Omega  { \left|\frac{1}{{\varepsilon _j^d}}\eta ( {\frac{
        {x - y}}{{{\varepsilon _j}}}} ) \right|\frac{|x - y|}{|y + \theta(x - y)|^{2}}dx}|u(y)||f_j(y)|dy} } \\
        &\le C | \sigma   |\sum\limits_{j = 0}^\infty  {\int_\Omega
        {\int_\Omega  { \left| {\frac{1}{{\varepsilon _j^d}}\eta ( {\frac{{x - y}
        }{{{\varepsilon _j}}}} ) | {x - y}  |}  \right|dx}|
        {u(y)}  | | {{f_j}(y)}  |{{ | y  |}^{ - 2}}dy} } \\
        &\le C{\varepsilon _j} | \sigma   |\int_{{\mathbb{R}^d}} {|\eta (z)|dz}
        \sum\limits_{j = 0}^\infty  {\int_\Omega  { | {u(y)}  | | {
        {f_j}(y)}  |{{ | y  |}^{ - 2}}dy} } \\
        &= C{\varepsilon _j} | \sigma   |\int_{{\mathbb{R}^d}} {|\eta (z)|dz}
         {\int_\Omega  { | {u(y)}  |\Big| \sum\limits_{j = 0}^\infty {
        {f_j}(y)} \Big|{{ | y  |}^{ - 2}}dy} } \\
                &\le C{\varepsilon _j} | \sigma   |\int_{{\mathbb R^d}}
        {\eta (z)dz}  \\
       &\lesssim \varepsilon,
\end{align*} where we have used the
condition (\ref{eq4}).  By taking the supremum over $\varphi $ and
the arbitrariness of $\varepsilon>0$, the theorem can be proved.
\end{proof}

\begin{remark}
Suppose $\Omega$ is a bounded domain in $\mathbb{R}^d$ and the
weighted Sobolev space $W^{k,p}_0(\Omega,|x|^\alpha)$ is the closure
of the set $C^\infty_c(\Omega)$ in the norm
$$\|u\|:=\sum_{|\beta|\le k}\| \partial^\beta u\|_{L^{p}(|x|^\alpha)}.$$
with $1<p<{d}/{k}$ and $-2\le \alpha\le 0$. It is noted that
 $W^{k,p}_0(\Omega,|x|^\alpha)$ is exactly the
classical Sobolev space when $\alpha= 0$. The condition { (\ref
{eq4})} can be satisfied in one of the following three cases:
\item{\rm (1)}  $u\in W^{1,p}_0(\Omega,|x|^\alpha)$ for $1<p< -{d}/{\alpha}$ with $-2\le \alpha \le -1$ and $d\ge 3$;
\item{\rm (2)}   $u\in W^{2,2}_0(\Omega)$ for $d>4$;
\item{\rm (3)}  $u$ is any bounded function on $\Omega$ for $d\ge 3$.
\end{remark}
\begin{proof}Firstly, if  $u\in W^{1,p}_0(\Omega)$, then we use the
Hardy-Sobolev inequality in \cite[p. 259]{CKN84}   to obtain
\begin{align*}\int_{\Omega}|u(y)||y|^{-2}dy&\le |\Omega|^{{1}/{r'}}
\left(\int_{\Omega}|u(y)|^{r}|y|^{-2r}dy\right)^{{1}/{r}}\\
& \lesssim |\Omega|^{{1}/{r'}} \left(\int_{\Omega}|\nabla
u|^pdy\right)^{{1}/{p}}\\
&<\infty,
\end{align*}
where ${1}/{r}-{(1+\alpha)}/{d}={1}/{p}$ and
$1<r<{d}/{2}$. Secondly, if $u\in W^{2,2}_0(\Omega)$, then by
the Hardy-Sobolev inequality in
 \cite[(1.2)]{PR14} we obtain
\begin{align*}\int_{\Omega}|u(y)||y|^{-2}dy&\le |\Omega|^{{1}/{2}}
\left(\int_{\Omega}|u(y)|^{2}|y|^{-4}dy\right)^{{1}/{2}}\\
& \lesssim |\Omega|^{{1}/{2}} \left(\int_{\Omega}|\Delta
u|^2dy\right)^{{1}/{2}}\\
&<\infty.
\end{align*}Finally, since $u$ is bounded on $ \Omega$, then there exists a nonnegative constant $C$ such that
$|u(x)|\le C$ for any $x\in\Omega$. Moreover,  there exists a ball
$B(0, r)$ such that $E\subseteq B(0, r) $, where  $B(0,r)$ denotes a
ball with center $0$ and radius $r
> 0$. Then,  since $d\ge 3,$
\begin{align*}\int_{\Omega}|u(y)||y|^{-2}dy
 \le C  \left(\int_{B(0,r)}|y|^{-2}dy\right)^{{1}/{2}}
 <\infty.
\end{align*}
All in all, the condition { (\ref{eq4})} can be satisfied under the
above cases.
\end{proof}

Moreover, by Lemma \ref{semicontinuity} and Theorem \ref{th2.5},
we have the following max-min property of the
${\mathcal{H}_a}$-variation.

\begin{theorem}\label{th-2.1}
    Let   $\Omega\subset{\mathbb{R} ^d}$ be an open and bounded domain.  Suppose
    $u,v\in {L^1}(\Omega )$ obey   the condition (\ref{eq4}),
    then
   \[ | {{\nabla _{{{{\mathcal H}}_a}}}\max  \{ {u,v}  \}}
    |(\Omega ) +  | {{\nabla _{{{{\mathcal H}}_a}}}\min
   \{ {u,v}  \}}  |(\Omega ) \le  | {{\nabla _{{{{\mathcal
   H}}_a}}}u}  |(\Omega ) +  | {{\nabla _{{{{\mathcal H}}_a}}}v}
    |(\Omega ).\]
\end{theorem}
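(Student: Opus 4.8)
The plan is to reduce the statement to smooth functions via the approximation Theorem~\ref{th2.5}, establish the corresponding pointwise identity for the generalized gradient of $\max$ and $\min$, and then pass to the limit using the lower semicontinuity in Lemma~\ref{semicontinuity}(ii). First I may assume $u,v\in\mathcal{BV}_{\mathcal{H}_a}(\Omega)$, since otherwise the right-hand side is $+\infty$ and there is nothing to prove. As $\Omega$ is open and bounded and $u,v$ satisfy \eqref{eq4}, Theorem~\ref{th2.5} furnishes sequences $\{u_h\},\{v_h\}\subset \mathcal{BV}_{\mathcal{H}_a}(\Omega)\cap C_c^\infty(\Omega)$ with $u_h\to u$, $v_h\to v$ in $L^1(\Omega)$ and
\[
\int_\Omega |\nabla_{\mathcal{H}_a}u_h(x)|\,dx\to |\nabla_{\mathcal{H}_a}u|(\Omega),\qquad \int_\Omega |\nabla_{\mathcal{H}_a}v_h(x)|\,dx\to |\nabla_{\mathcal{H}_a}v|(\Omega).
\]

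Next I would treat the smooth case. Fix $h$ and set $w_h:=\max\{u_h,v_h\}$, $z_h:=\min\{u_h,v_h\}$; these are Lipschitz with compact support in $\Omega$, and since $0\notin\Omega$ the weight $|x|^{-1}$ is bounded on their supports, so $w_h,z_h\in W^{1,1}_{\mathcal{H}_a}(\Omega)$. The key point is the $\mathcal{L}^d$-a.e.\ identity
\[
\nabla_{\mathcal{H}_a}w_h=\chi_{\{u_h\ge v_h\}}\,\nabla_{\mathcal{H}_a}u_h+\chi_{\{u_h< v_h\}}\,\nabla_{\mathcal{H}_a}v_h,\qquad \nabla_{\mathcal{H}_a}z_h=\chi_{\{u_h\ge v_h\}}\,\nabla_{\mathcal{H}_a}v_h+\chi_{\{u_h< v_h\}}\,\nabla_{\mathcal{H}_a}u_h,
\]
which follows by writing $\nabla w_h=\chi_{\{u_h>v_h\}}\nabla u_h+\chi_{\{u_h<v_h\}}\nabla v_h+\chi_{\{u_h=v_h\}}\nabla u_h$ a.e.\ (locality of the weak gradient together with the classical fact that $\nabla u_h=\nabla v_h$ a.e.\ on $\{u_h=v_h\}$), observing that $w_h=\chi_{\{u_h\ge v_h\}}u_h+\chi_{\{u_h<v_h\}}v_h$ pointwise, and adding the common potential term $\sigma\,x|x|^{-2}w_h$; the formula for $z_h$ follows since $z_h=u_h+v_h-w_h$. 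Taking absolute values and summing, both displayed quantities equal $|\nabla_{\mathcal{H}_a}u_h|+|\nabla_{\mathcal{H}_a}v_h|$ a.e.; integrating over $\Omega$ and applying Lemma~\ref{semicontinuity}(i),
\[
|\nabla_{\mathcal{H}_a}w_h|(\Omega)+|\nabla_{\mathcal{H}_a}z_h|(\Omega)=\int_\Omega |\nabla_{\mathcal{H}_a}u_h(x)|\,dx+\int_\Omega |\nabla_{\mathcal{H}_a}v_h(x)|\,dx.
\]

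Finally I would pass to the limit. From the $L^1$-convergences and the elementary bound $|\max\{a,b\}-\max\{a',b'\}|\le |a-a'|+|b-b'|$ (and its analogue for $\min$) we get $w_h\to\max\{u,v\}$ and $z_h\to\min\{u,v\}$ in $L^1(\Omega)$, hence in $L^1_{loc}(\Omega)$; since $w_h,z_h\in\mathcal{BV}_{\mathcal{H}_a}(\Omega)$ by the previous step, Lemma~\ref{semicontinuity}(ii) yields $|\nabla_{\mathcal{H}_a}\max\{u,v\}|(\Omega)\le\liminf_h |\nabla_{\mathcal{H}_a}w_h|(\Omega)$ and the analogue for $\min$. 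Adding these, using $\liminf_h\alpha_h+\liminf_h\beta_h\le\liminf_h(\alpha_h+\beta_h)$, the identity just obtained, and the convergence of the integrals, we arrive at
\[
|\nabla_{\mathcal{H}_a}\max\{u,v\}|(\Omega)+|\nabla_{\mathcal{H}_a}\min\{u,v\}|(\Omega)\le |\nabla_{\mathcal{H}_a}u|(\Omega)+|\nabla_{\mathcal{H}_a}v|(\Omega),
\]
which is the assertion. The scheme is the classical one (cf.\ \cite[Section~5.2.2]{EG}), so I expect the only delicate points to be bookkeeping ones caused by the potential term $\sigma\,x|x|^{-2}$: checking that $\max\{u_h,v_h\}$ and $\min\{u_h,v_h\}$ genuinely lie in $W^{1,1}_{\mathcal{H}_a}(\Omega)$, so that Lemma~\ref{semicontinuity}(i) can identify their $\mathcal{H}_a$-variation with an integral, and verifying that the potential contribution splits cleanly along $\{u_h\ge v_h\}$ and $\{u_h<v_h\}$ so that it recombines with the gradient part into the stated pointwise identity; both are resolved by $0\notin\Omega$ and by the coincidence-set identity $\nabla u_h=\nabla v_h$ a.e.\ on $\{u_h=v_h\}$. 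Hypothesis \eqref{eq4} enters only to make Theorem~\ref{th2.5} applicable to $u$ and $v$.
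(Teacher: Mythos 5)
Your proposal is correct and follows exactly the route the paper intends: the paper states Theorem \ref{th-2.1} as a direct consequence of Lemma \ref{semicontinuity} and Theorem \ref{th2.5} without writing out details, and your argument (smooth approximation, the a.e.\ identity for $\nabla_{\mathcal{H}_a}$ of $\max$ and $\min$ with the potential term splitting along $\{u_h\ge v_h\}$, then lower semicontinuity and superadditivity of $\liminf$) is precisely that proof, with the needed bookkeeping about compact supports and $0\notin\Omega$ handled correctly.
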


\subsection{Basic properties of ${\mathcal{H}_a}$-perimeter}\label{sec-2.2}
\hspace{0.6cm}
The ${{\mathcal H}_a}$-perimeter of $E\subseteq \Omega$ can be defined
as follows:
\begin{equation}\label{def-hp}
    P_{{\mathcal H}_a}(E,\Omega)= | {{\nabla _{{{{\mathcal H}}_a}}}
    {1_E}}  |(\Omega )=\sup_{\varphi\in \mathcal {F}(\Omega ) } \Big\{\int_E \mathrm{div}_{{\mathcal
    H}_a}\varphi(x)dx \Big\}.
\end{equation}

The following conclusion is a direct corollary of Lemma \ref {semicontinuity}.

\begin{corollary}(Lower semicontinuity of ${P_{{{{\mathcal H}}_a}}}$)
                  \label{semicontinuity-2}
  Suppose ${1_{{E_k}}} \to {1_E}$ in $L_{loc}^1(\Omega )$, where $E$ and
  ${E_k}$, $k \in \mathbb N,$ are subsets of $\Omega $, then \[{P_{{{{
  \mathcal H}}_a}}}(E,\Omega ) \le \mathop {\lim \inf }\limits_{k \to
  \infty } {P_{{{{\mathcal H}}_a}}}({E_k},\Omega ).\]
\end{corollary}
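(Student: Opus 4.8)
The plan is to reduce the statement immediately to the lower semicontinuity of $\mathcal{H}_a$-BV functions, Lemma~\ref{semicontinuity}(ii). By the very definition~(\ref{def-hp}) of the $\mathcal{H}_a$-perimeter, one has $P_{\mathcal{H}_a}(E,\Omega)=|\nabla_{\mathcal H_a}1_E|(\Omega)$ and $P_{\mathcal{H}_a}(E_k,\Omega)=|\nabla_{\mathcal H_a}1_{E_k}|(\Omega)$ for every $k$. Thus, putting $f:=1_E$ and $f_k:=1_{E_k}$, the asserted inequality is exactly the statement that $|\nabla_{\mathcal H_a}f|(\Omega)\le\liminf_{k\to\infty}|\nabla_{\mathcal H_a}f_k|(\Omega)$ whenever $f_k\to f$ in $L^1_{loc}(\Omega)$, which is the content of Lemma~\ref{semicontinuity}(ii).

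The only point requiring a word of care is that Lemma~\ref{semicontinuity}(ii) is phrased for sequences $f_k\in\mathcal{BV}_{\mathcal H_a}(\Omega)$, i.e.\ with $P_{\mathcal{H}_a}(E_k,\Omega)<\infty$. If $\liminf_{k\to\infty}P_{\mathcal{H}_a}(E_k,\Omega)=+\infty$ there is nothing to prove; otherwise I would pass to a subsequence $(E_{k_j})_j$ along which $P_{\mathcal{H}_a}(E_{k_j},\Omega)$ converges to this finite value, observe that $1_{E_{k_j}}\in\mathcal{BV}_{\mathcal H_a}(\Omega)$ for $j$ large and that $1_{E_{k_j}}\to 1_E$ in $L^1_{loc}(\Omega)$ still holds, and apply Lemma~\ref{semicontinuity}(ii) along this subsequence.

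Alternatively — and this is essentially the mechanism already used in the proof of Lemma~\ref{semicontinuity}(ii) — one can argue directly from the definition of the variation as a supremum: fix $\varphi\in\mathcal{F}(\Omega)$; since $\mathrm{supp}\,\varphi$ is a compact subset of $\Omega$ and $0\notin\Omega$, there is $c>0$ with $|x|>c$ on $\mathrm{supp}\,\varphi$, so $\mathrm{div}_{\mathcal H_a}\varphi$ is bounded on $\Omega$ with compact support; the $L^1_{loc}$-convergence $1_{E_k}\to 1_E$ then gives $\int_{E_k}\mathrm{div}_{\mathcal H_a}\varphi\,dx\to\int_E\mathrm{div}_{\mathcal H_a}\varphi\,dx$, whence $\int_E\mathrm{div}_{\mathcal H_a}\varphi\,dx\le\liminf_{k\to\infty}P_{\mathcal{H}_a}(E_k,\Omega)$, and taking the supremum over $\varphi\in\mathcal{F}(\Omega)$ concludes the argument. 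There is no substantive obstacle: the entire content is the observation that $\mathrm{div}_{\mathcal H_a}\varphi$ stays bounded because the singularity of the potential term $\sigma x_i/|x|^2$ is harmless away from the origin, which is guaranteed by $\Omega\subset\mathbb{R}^d\setminus\{0\}$.
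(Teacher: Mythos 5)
Your proposal is correct and follows essentially the same route as the paper, which simply observes that the corollary is a direct consequence of Lemma \ref{semicontinuity}(ii) applied to $f=1_E$, $f_k=1_{E_k}$, since $P_{\mathcal H_a}(\cdot,\Omega)=|\nabla_{\mathcal H_a}1_{(\cdot)}|(\Omega)$ by definition (\ref{def-hp}). Your extra care about the case $\liminf_k P_{\mathcal H_a}(E_k,\Omega)=+\infty$ and the alternative direct argument from the definition are fine but not needed beyond what the paper's one-line reduction already gives.
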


Moreover, by Theorem \ref {th-2.1}, via choosing $u=1_E$ and $v=1_F$
for any compact subsets $E, F$ in $\Omega$, we immediately obtain
the following corollary. Moreover, it follows from \cite[Section 1.1
(iii)]{XZ} that the  equality condition of (\ref{eq10})  can be
similarly obtained.

\begin{corollary}
    For any compact subsets $E, F$ in $\Omega$, we have
    \begin{equation}\label{eq10}P_{{\mathcal H}_a}(E\cap F,\Omega)+P_{{\mathcal H}_a}(E\cup F,
    \Omega)\le P_{{\mathcal H}_a}(E,\Omega)+P_{{\mathcal H}_a}
    (F,\Omega).\end{equation}
Especially, if $P_{{\mathcal H}_a}(E\setminus (E\cap F),\Omega)
\cdot P_{{\mathcal H}_a}(F\setminus (F\cap E),\Omega)=0$, the
equality of (\ref{eq10}) holds true.  \end{corollary}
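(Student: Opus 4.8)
The plan is to derive (\ref{eq10}) directly from Theorem \ref{th-2.1}, and to obtain the equality condition by an explicit decomposition of the $\mathcal{H}_a$-perimeter that reduces the matter to the Euclidean statement recalled in \cite[Section 1.1 (iii)]{XZ}. For the inequality I would first check that $u=\mathbf{1}_E$ and $v=\mathbf{1}_F$ are admissible in Theorem \ref{th-2.1}: since $E$ and $F$ are compact subsets of $\Omega\subset\mathbb R^{d}\setminus\{0\}$, we have $\rho:=\mathrm{dist}(E\cup F,0)>0$, hence $\mathbf{1}_E,\mathbf{1}_F\in L^1(\Omega)$ and $\int_\Omega|\mathbf{1}_E(y)|\,|y|^{-2}\,dy\le\rho^{-2}|E|<\infty$ (and similarly for $F$), so the condition (\ref{eq4}) is satisfied. (I take $\Omega$ bounded, as in Theorem \ref{th-2.1}; otherwise one replaces $\Omega$ by a bounded subdomain containing $E\cup F$.) Then the pointwise identities $\max\{\mathbf{1}_E,\mathbf{1}_F\}=\mathbf{1}_{E\cup F}$ and $\min\{\mathbf{1}_E,\mathbf{1}_F\}=\mathbf{1}_{E\cap F}$, together with the definition (\ref{def-hp}) of $P_{\mathcal{H}_a}$, turn Theorem \ref{th-2.1} into (\ref{eq10}).

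For the equality condition, the key step is a decomposition of the $\mathcal{H}_a$-perimeter. Let $G$ be a set of finite $\mathcal{H}_a$-perimeter contained in $\Omega$. Applying Lemma \ref{Structure} to $\mathbf{1}_G$ and using $\mathrm{div}_{\mathcal{H}_a}\varphi=\mathrm{div}\,\varphi-\sigma\frac{x}{|x|^{2}}\cdot\varphi$, one tests against $\varphi\in C_c^\infty(\Omega,\mathbb R^{d})$ and identifies the representing measure as
\[\mu_{\mathcal{H}_a,G}=-\,D\mathbf{1}_G-\sigma\frac{x}{|x|^{2}}\mathbf{1}_G\,\mathcal L^{d},\]
where $D\mathbf{1}_G$ is the usual distributional gradient; in particular $\mathbf{1}_G\in BV(\Omega)$, because the perturbation term is bounded on $G$ (which lies at positive distance from the origin). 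By De Giorgi's structure theorem $D\mathbf{1}_G$ is carried by the reduced boundary $\partial^*G$ and is singular with respect to $\mathcal L^{d}$, whereas $\sigma\frac{x}{|x|^{2}}\mathbf{1}_G\,\mathcal L^{d}$ is absolutely continuous; splitting the total variation of $\mu_{\mathcal{H}_a,G}$ across these mutually singular parts gives
\[P_{\mathcal{H}_a}(G,\Omega)=P(G,\Omega)+|\sigma|\int_G\frac{dy}{|y|},\]
where $P(\cdot,\Omega)$ is the Euclidean perimeter; in particular $P(G,\Omega)\le P_{\mathcal{H}_a}(G,\Omega)$.

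Using this identity with $G\in\{E,F,E\cap F,E\cup F\}$ and the pointwise relation $\mathbf{1}_{E\cap F}+\mathbf{1}_{E\cup F}=\mathbf{1}_E+\mathbf{1}_F$, the absolutely continuous contributions cancel, so the defect in (\ref{eq10}) coincides with the defect in the classical submodularity inequality for $P$:
\[\big(P_{\mathcal{H}_a}(E,\Omega)+P_{\mathcal{H}_a}(F,\Omega)\big)-\big(P_{\mathcal{H}_a}(E\cap F,\Omega)+P_{\mathcal{H}_a}(E\cup F,\Omega)\big)=\big(P(E,\Omega)+P(F,\Omega)\big)-\big(P(E\cap F,\Omega)+P(E\cup F,\Omega)\big).\]
Finally, the hypothesis $P_{\mathcal{H}_a}(E\setminus(E\cap F),\Omega)\cdot P_{\mathcal{H}_a}(F\setminus(F\cap E),\Omega)=0$ forces $P(E\setminus F,\Omega)\cdot P(F\setminus E,\Omega)=0$, by the bound $P\le P_{\mathcal{H}_a}$ established above; this is exactly the configuration in which the classical defect vanishes by \cite[Section 1.1 (iii)]{XZ}. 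Hence the right-hand side of the displayed identity is zero, and therefore so is the left-hand side, i.e. equality holds in (\ref{eq10}).

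The main obstacle is the decomposition $P_{\mathcal{H}_a}(G,\Omega)=P(G,\Omega)+|\sigma|\int_G|y|^{-1}\,dy$: one must verify that finiteness of the $\mathcal{H}_a$-perimeter propagates to finiteness of the Euclidean perimeter (so that De Giorgi's theorem applies and $D\mathbf{1}_G\perp\mathcal L^{d}$), and that the total variation of $\mu_{\mathcal{H}_a,G}$ is additive over its mutually singular absolutely continuous and jump parts. Once these two points are settled, everything else — the inequality via Theorem \ref{th-2.1} and the equality via the cancellation of the absolutely continuous terms — is routine, and the equality condition follows by the cited Euclidean result.
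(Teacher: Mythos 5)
Your treatment of the inequality coincides with the paper's: both apply Theorem \ref{th-2.1} to $u=1_{E}$, $v=1_{F}$, and your explicit check that indicators of compact subsets of $\Omega\subset\mathbb R^{d}\setminus\{0\}$ satisfy (\ref{eq4}) is a detail the paper leaves implicit. For the equality condition, however, you take a genuinely different route. The paper argues directly from subadditivity and the definition (\ref{def-hp}): assuming $P_{{\mathcal H}_a}(E\setminus(E\cap F),\Omega)=0$, it applies (\ref{eq10}) to the decomposition $E=(E\setminus(E\cap F))\cup(E\cap F)$ to obtain $P_{{\mathcal H}_a}(E,\Omega)\le P_{{\mathcal H}_a}(E\cap F,\Omega)$, and writes $E\cup F=F\cup(E\setminus(E\cap F))$ inside the supremum defining $P_{{\mathcal H}_a}(F,\Omega)$ to obtain $P_{{\mathcal H}_a}(F,\Omega)\le P_{{\mathcal H}_a}(E\cup F,\Omega)$; adding the two gives the reverse of (\ref{eq10}), with no recourse to Euclidean BV theory. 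You instead prove the structural identity $P_{{\mathcal H}_a}(G,\Omega)=|D1_G|(\Omega)+|\sigma|\int_G |y|^{-1}dy$ for bounded measurable $G$ at positive distance from the origin with finite ${\mathcal H}_a$-perimeter (via Lemma \ref{Structure}, the equivalence of finiteness of the two perimeters on such sets, and the mutual singularity of $D1_G$ and the absolutely continuous term), cancel the absolutely continuous contributions using $1_{E\cap F}+1_{E\cup F}=1_E+1_F$, and invoke the classical equality condition from \cite{XZ}. This is correct, and the decomposition is a nice by-product the paper never makes explicit; indeed, when $\sigma\ne 0$ it shows that $P_{{\mathcal H}_a}(E\setminus F,\Omega)=0$ already forces $|E\setminus F|=0$, which yields the equality immediately without citing \cite{XZ} at all. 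The trade-offs: your cancellation step needs all four perimeters to be finite (the degenerate case of an infinite perimeter should be dispatched separately, e.g.\ by the observation just made), and it imports the Euclidean result and the connectedness of $\Omega$ from \cite{XZ}, whereas the paper's argument is self-contained, shorter, and uses nothing beyond (\ref{eq10}) and the definition of $P_{{\mathcal H}_a}$.
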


\begin{proof}Since (\ref{eq10}) is valid, we only need to prove its converse
inequality  holds true under the above condition.  Obviously, the
condition $P_{{\mathcal H}_a}(E\setminus (E\cap F),\Omega) \cdot
P_{{\mathcal H}_a}(F\setminus (F\cap E),\Omega)=0$ implies that
$P_{{\mathcal H}_a}(E\setminus (E\cap F),\Omega)=0$ or $P_{{\mathcal
H}_a}(F\setminus (F\cap E),\Omega)=0$. Suppose $P_{{\mathcal
H}_a}(E\setminus (E\cap F),\Omega)=0$. Via (\ref{eq10}), we have
\begin{eqnarray}\label{eq11}
P_{{\mathcal H}_a}(E,\Omega)&=&P_{{\mathcal H}_a}((E\setminus (E\cap
F))\cup (E\cap F),\Omega)\\ \nonumber &\le& P_{{\mathcal
H}_a}(E\setminus (E\cap F),\Omega)+P_{{\mathcal H}_a}( E\cap
F,\Omega)\\ \nonumber  &=&P_{{\mathcal H}_a}( E\cap F,\Omega).
\end{eqnarray}
Using (\ref{def-hp}) and $E\cup F=F\cup (E\setminus (E\cap F))$, we
obtain
\begin{eqnarray}\label{eqq12}
P_{{\mathcal H}_a}(F,\Omega)&=&\sup_{\varphi\in \mathcal {F}(\Omega
) } \Big\{\int_F \mathrm{div}_{{\mathcal
    H}_a}\varphi(x)dx \Big\}\\ \nonumber
&=& \sup_{\varphi\in \mathcal {F}(\Omega ) } \Big\{\int_{E\cup F}
\mathrm{div}_{{\mathcal
    H}_a}\varphi(x)dx -\int_{E\setminus (E\cap F)}
\mathrm{div}_{{\mathcal
    H}_a}\varphi(x)dx \Big\}\\ \nonumber
    &\le& \sup_{\varphi\in \mathcal {F}(\Omega ) } \Big\{\int_{E\cup F}
\mathrm{div}_{{\mathcal
    H}_a}\varphi(x)dx\Big\} +\sup_{\varphi\in \mathcal {F}(\Omega ) } \Big\{\int_{E\setminus (E\cap F)}
\mathrm{div}_{{\mathcal
    H}_a}\varphi(x)dx \Big\}\\ \nonumber
    &=&P_{{\mathcal
H}_a}( E\cup F,\Omega)+ P_{{\mathcal H}_a}(E\setminus E\cap
F,\Omega)\\ \nonumber &=&P_{{\mathcal H}_a}( E\cup F,\Omega).
\end{eqnarray}
Combining (\ref{eq11}) with (\ref{eqq12}) deduces that
\begin{eqnarray*}
P_{{\mathcal H}_a}(E,\Omega)+P_{{\mathcal H}_a}(F,\Omega)\le
P_{{\mathcal H}_a}( E\cup F,\Omega)+P_{{\mathcal H}_a}( E\cap
F,\Omega),
\end{eqnarray*}which derives the desired result. Another case can be
similarly proved, we omit the details.
\end{proof}

For the ${{{\mathcal H}}_a}$-variation and the ${{{\mathcal
H}}_a}$-perimeter, we can prove a coarea inequality   for functions
in ${{\mathcal B}}{{{\mathcal V}}_{{{\mathcal H}}_a}}(\Omega )$.
\begin{theorem}\label{th-2.9}
If $f \in {{\mathcal B}}{{{\mathcal V}}_{{{\mathcal H}}_a}}(\Omega )$,
then
\[ | {{\nabla _{{{{\mathcal H}}_a}}}f}  |(\Omega ) \le
\int_{ - \infty }^{ + \infty } {{P_{{{{\mathcal H}}_a}}}({E_t},
\Omega )dt}, \]where ${E_t}=\{x\in\Omega: f(x)>t\}$ for
$t\in\mathbb{R}$.
\end{theorem}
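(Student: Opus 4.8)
\medskip
\noindent\textbf{Proof strategy.}\ The plan is to run the classical ``layer--cake'' proof of the coarea inequality for $BV$ functions (cf.\ \cite{EG}), replacing the ordinary divergence by $\mathrm{div}_{\mathcal{H}_a}$ and keeping track of the extra zeroth--order term that this substitution creates. Fix $\varphi\in\mathcal{F}(\Omega)$ and start from the pointwise Cavalieri identity
\[
f(x)=\int_{0}^{+\infty}\mathbf 1_{E_t}(x)\,dt-\int_{-\infty}^{0}\bigl(1-\mathbf 1_{E_t}(x)\bigr)\,dt,\qquad x\in\Omega .
\]
Since $\varphi\in C_c^1(\Omega,\mathbb R^d)$ and $0\notin\Omega$, the support of $\varphi$ is a compact subset of $\mathbb R^d\setminus\{0\}$, so $\mathrm{div}_{\mathcal{H}_a}\varphi=\mathrm{div}\,\varphi-\sigma\,x\cdot\varphi/|x|^2$ is bounded with compact support; as $f\in L^1(\Omega)$, Fubini's theorem applies to the product of the above representation with $\mathrm{div}_{\mathcal{H}_a}\varphi$ and gives
\[
\int_{\Omega}f\,\mathrm{div}_{\mathcal{H}_a}\varphi\,dx
=\int_{0}^{+\infty}\!\Bigl(\int_{E_t}\mathrm{div}_{\mathcal{H}_a}\varphi\,dx\Bigr)dt
-\int_{-\infty}^{0}\!\Bigl(\int_{\Omega\setminus E_t}\mathrm{div}_{\mathcal{H}_a}\varphi\,dx\Bigr)dt .
\]
By the definition of $P_{\mathcal{H}_a}$ in (\ref{def-hp}) the first inner integral is $\le P_{\mathcal{H}_a}(E_t,\Omega)$, and, since $-\varphi\in\mathcal{F}(\Omega)$ too, $-\int_{\Omega\setminus E_t}\mathrm{div}_{\mathcal{H}_a}\varphi\,dx=\int_{\Omega\setminus E_t}\mathrm{div}_{\mathcal{H}_a}(-\varphi)\,dx\le P_{\mathcal{H}_a}(\Omega\setminus E_t,\Omega)$. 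Combining these and taking the supremum over $\varphi\in\mathcal{F}(\Omega)$ reduces the theorem to the inequality
\[
\int_{-\infty}^{0}P_{\mathcal{H}_a}(\Omega\setminus E_t,\Omega)\,dt\ \le\ \int_{-\infty}^{0}P_{\mathcal{H}_a}(E_t,\Omega)\,dt .
\]

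This last step is the only genuine difficulty. In the Euclidean setting $\int_{\Omega}\mathrm{div}\,\varphi\,dx=0$, so $\int_{\Omega\setminus E_t}\mathrm{div}\,\varphi=-\int_{E_t}\mathrm{div}\,\varphi$, the two $t$--integrals collapse into a single $\int_{-\infty}^{+\infty}$, and moreover $P(\Omega\setminus E_t,\Omega)=P(E_t,\Omega)$. Here, however, $\int_{\Omega}\mathrm{div}_{\mathcal{H}_a}\varphi\,dx=-\sigma\int_{\Omega}x\cdot\varphi/|x|^2\,dx$ need not vanish --- precisely the perturbation effect recorded in (\ref{eq-1.3}) --- so $\mathbf 1_{\Omega\setminus E_t}$ and $\mathbf 1_{E_t}$ have in general different $\mathcal{H}_a$--variations and the naive merging fails. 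To get around this I would invoke the structure theorem (Lemma \ref{Structure}) for $\mathbf 1_E$: the measure representing $\nabla_{\mathcal{H}_a}\mathbf 1_E$ on $\Omega$ differs from $D\mathbf 1_E$ only by the absolutely continuous term $\sigma\,x|x|^{-2}\mathbf 1_E\,\mathcal L^d$, and these two pieces are mutually singular, whence
\[
P_{\mathcal{H}_a}(E,\Omega)=P(E,\Omega)+|\sigma|\int_{E}|x|^{-1}\,dx\ \ge\ |\sigma|\int_{E}|x|^{-1}\,dx ,
\]
$P(\cdot,\Omega)$ being the ordinary perimeter. Since $\mathbf 1_{E_t}\to\mathbf 1_\Omega$ $\mathcal L^d$--a.e.\ as $t\to-\infty$, monotone convergence gives $\int_{E_t}|x|^{-1}\,dx\to\int_{\Omega}|x|^{-1}\,dx>0$, hence $P_{\mathcal{H}_a}(E_t,\Omega)$ stays bounded below by a fixed positive constant for $t$ sufficiently negative; therefore $\int_{-\infty}^{0}P_{\mathcal{H}_a}(E_t,\Omega)\,dt=+\infty$ whenever $\sigma\ne0$, and the displayed reduction --- hence the theorem --- holds automatically. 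For $\sigma=0$ the operator is $-\Delta$, $\int_\Omega\mathrm{div}\,\varphi\,dx=0$, and the argument reduces to the classical one.

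It then remains to settle the routine measurability and integrability points passed over above: Borel measurability of $t\mapsto\int_{E_t}\mathrm{div}_{\mathcal{H}_a}\varphi\,dx$ (from monotonicity of $t\mapsto\mathbf 1_{E_t}$ and Fubini), finiteness of $P_{\mathcal{H}_a}(E_t,\Omega)$ for a.e.\ $t$ (apply the classical coarea formula to the above splitting of $P_{\mathcal{H}_a}$, using $|\nabla_{\mathcal{H}_a}f|(\Omega)<\infty$), and $\mathcal L^d(\{f=t\})=0$ for a.e.\ $t$, which lets one replace $\Omega\setminus E_t$ by $\{f<t\}$ wherever convenient. I expect no trouble in these; the single new ingredient compared with the Euclidean proof is the failure of $\int_\Omega\mathrm{div}_{\mathcal{H}_a}\varphi\,dx=0$ and the way it is absorbed.
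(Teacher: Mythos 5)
Your proposal is correct, and at the decisive point it takes a genuinely different (and more careful) route than the paper. The paper's own proof asserts, for each $i$, the identity $-\int_\Omega f(x)\sigma x_i|x|^{-2}\varphi_i(x)\,dx=-\int_{-\infty}^{+\infty}\bigl(\int_{E_t}\sigma x_i|x|^{-2}\varphi_i(x)\,dx\bigr)dt$ alongside the classical identity for $\int_\Omega f\,\mathrm{div}\,\varphi\,dx$ from \cite{EG}, and then adds them. The second identity relies on $\int_\Omega\mathrm{div}\,\varphi\,dx=0$; the first is exactly where the failure of (\ref{eq-1.3}) for $\mathrm{div}_{\mathcal{H}_a}$ enters: as $t\to-\infty$ the inner integral tends to $\int_\Omega\sigma x_i|x|^{-2}\varphi_i\,dx$, which is in general nonzero, so the $t$-integral over $(-\infty,0)$ diverges and the asserted identity cannot hold as written. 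Your Cavalieri decomposition, which keeps $\Omega\setminus E_t$ on the negative side, is the correct bookkeeping, and your reduction to $\int_{-\infty}^{0}P_{\mathcal{H}_a}(\Omega\setminus E_t,\Omega)\,dt\le\int_{-\infty}^{0}P_{\mathcal{H}_a}(E_t,\Omega)\,dt$ is sound. Your resolution — the splitting $P_{\mathcal{H}_a}(E,\Omega)=P(E,\Omega)+|\sigma|\int_E|x|^{-1}dx$, obtained from mutual singularity of $D1_E$ and the absolutely continuous potential term, whence $P_{\mathcal{H}_a}(E_t,\Omega)$ is bounded below by a positive constant for $t$ very negative and the right-hand side of the theorem is $+\infty$ whenever $\sigma\ne 0$ — is valid, and it exposes something the paper does not say: for $a\ne 0$ the stated coarea inequality is vacuously true (its right-hand side is always infinite), with genuine content only in the classical case $\sigma=0$. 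This is consistent with, and explains, the paper's subsequent move to $\widetilde{\mathcal H}_\sigma$ and the restricted classes, where $\int_\Omega\mathrm{div}_{\widetilde{\mathcal H}_\sigma}\Phi\,dx=0$ can be arranged.

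Two small patches you should make explicit. First, Lemma \ref{Structure} is stated for $u\in L^1(\Omega)$, while $1_{E_t}$ with $t<0$ need not be integrable when $|\Omega|=\infty$; either note that the Riesz representation only needs boundedness of $\varphi\mapsto\int_{E}\mathrm{div}_{\mathcal{H}_a}\varphi\,dx$ on the unit ball of $C_c$, or bypass the decomposition entirely via lower semicontinuity (Corollary \ref{semicontinuity-2}): if $P_{\mathcal{H}_a}(E_{t_k},\Omega)\to 0$ along some $t_k\to-\infty$, then $1_{E_{t_k}}\to 1_\Omega$ in $L^1_{loc}(\Omega)$ would force $P_{\mathcal{H}_a}(\Omega,\Omega)=0$, contradicting $\sup_{\varphi\in\mathcal{F}(\Omega)}\bigl(-\sigma\int_\Omega x\cdot\varphi\,|x|^{-2}dx\bigr)>0$ for $\sigma\ne 0$; hence $\int_{-\infty}^{0}P_{\mathcal{H}_a}(E_t,\Omega)\,dt=+\infty$ directly. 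Second, when citing the singularity of $D1_E$ with respect to $\mathcal{L}^d$, say a word (approximate gradient of a characteristic function vanishes a.e., or $|D1_E|$ is concentrated on the reduced boundary). With these remarks your argument is complete.
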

\begin{proof}
Let $\varphi\in C^1_c(\Omega,\mathbb{R}^{d})$ and ${ \| \varphi
\|_{{L^\infty }(\Omega)}} \le 1$. We can easily prove that for $i =
1,2,\ldots,d$,
$$-\int_\Omega  {f(x)\frac{{\sigma {x_i}}}{{{{ | x  |}^2}}}
\varphi_i (x)dx}  = -\int_{ - \infty }^{ + \infty } \Big(
{\int_{{E_t}} {\frac{{\sigma {x_i}}}{{{{ | x  |}^2}}}\varphi_i(x)
dx} } \Big)dt
$$
 and
$$\int_{\Omega}f (x)\mathrm{div} \varphi(x) dx=\int^\infty_{-\infty}
\Big(\int_{E_t}\mathrm{div} \varphi (x)dx\Big)dt, $$ where the
latter can be seen in the proof of \cite[Section 5.5, Theorem
1]{EG}.

It follows that $$\int_{\Omega}f(x)\mathrm{div}_{\mathcal{H} _a}
\varphi(x) dx=\int^\infty_{-\infty}\Big(\int_{E_t}\mathrm{div}_
{\mathcal{H} _a}\varphi(x) dx\Big)dt.$$ Therefore, we conclude that
for all $\varphi$ as above,
 \[\int_\Omega  {f(x)\mathrm{div}{_{{{{
\mathcal H}}_a}}}\varphi(x) dx}  \le \int_{ - \infty }^{ + \infty }
{{P_{{{{\mathcal H}}_a}}}({E_t},\Omega )dt}. \] Furthermore,
\[ | {{\nabla _{{{{\mathcal H}}_a}}}f}  |(\Omega )
\le \int_{ - \infty }^{ + \infty } {{P_{{{{\mathcal H}}_a}}}
({E_t},\Omega )dt}. \]
\end{proof}

\section{$\mathcal{\widetilde{H}}_{\sigma}$-BV functions} \label{sec-3}
\hspace{0.6cm} In order to overcome the deficiency of ${{\mathcal
B}}{{{\mathcal V}}_{{{\mathcal H}}_a}}(\Omega )$   in the coarea
formula, we turn to study another form of  divergence operator and
gradient operator related to the operator
$\mathcal{\widetilde{H}}_{\sigma}$ which is closely related to the
operator ${{\mathcal H}}_a$. Via a simple computation, we  obtain,
for $u\in C_c^2(\Omega)$,
\begin{align*}
    \mathcal{\widetilde{H}}_{\sigma}u:=-\mathrm{div}{_{\mathcal{\widetilde{H}}_
    {\sigma}}}( {{\nabla _{\mathcal{\widetilde{H}}_{\sigma}}}u } )
    &=-\Big( {   \nabla  - \sigma \frac{x}{{{{ | x  |}^2}}},
      \nabla  + \sigma \frac{x}{{{{ | x  |}^2}}}}\Big)
    \cdot \Big( {\nabla  + \sigma \frac{x}{{{{ | x  |}^2}}},
    \nabla  - \sigma \frac{x}{{{{ | x  |}^2}}}}\Big)u \\
    & = -\Big( {   \nabla  - \sigma \frac{x}{{{{ | x  |}^2}}},
      \nabla  + \sigma \frac{x}{{{{ | x  |}^2}}}} \Big) \cdot
    \Big( {( {\nabla  + \sigma \frac{x}{{{{ | x  |}^2}}}}
    )u ,( {\nabla  - \sigma \frac{x}{{{{ | x  |}^2}}}}
    )u } \Big)\\
    & = -( {  \nabla  -\sigma \frac{x}{{{{ | x  |}^2}}}}
    )( {\nabla  + \sigma \frac{x}{{{{ | x  |}^2}}}}
    )u  - ( { \nabla + \sigma \frac{x}{{{{ | x  |
    }^2}}}} )( {\nabla  - \sigma \frac{x}{{{{ | x  |}
    ^2}}}} )u \\
    &= \Big( { - \Delta  - \sigma ( {d - 2 - \sigma } )\frac{1}
    {{{{ | x  |}^2}}}} \Big)u  + \Big( { - \Delta  + \sigma
    ( {( {d - 2 - \sigma } )\frac{1}{{{{ | x  |}^2}}} +
    \frac{{2\sigma }}{{{{ | x  |}^2}}}} )} \Big)u \\
    &= 2\Big( { - \Delta  + \frac{{{\sigma ^2}}}{{{{ | x  |}^2}}}}\Big)u.
\end{align*}

The $\mathcal{\widetilde{H}}_{\sigma}$-divergence of a vector valued
function \[\Phi  = ({\varphi _1}, {\varphi _2}, \ldots ,{\varphi
_{2d}}) \in C_c^1(\Omega, {\mathbb{R} ^{2d}})\] is defined as
\[\mathrm{div}_{\mathcal{\widetilde{H}}_{\sigma}}\Phi :={A_{ - 1,a}}
{\varphi _1} +  \cdots  + {A_{ - d,a}}{\varphi _d} + {A_{
1,a}}{\varphi _ {d + 1}} +  \cdots  + {A_{ d,a}}{\varphi _{2d}}. \]
For $u\in C_c^1(\Omega)$, the
$\mathcal{\widetilde{H}}_{\sigma}$-gradient of $u$ is defined as
\[{\nabla _ {\mathcal{\widetilde{H}}_{\sigma}}}u := ({A_{1,a}}u, \ldots
,{A_{d,a}}u,  {A_{ - 1,a}}u, \ldots , {A_{ - d,a}})u. \]

Let $\Omega \subseteq \mathbb{R}^{d}$ be an open set. The
${\mathcal{\widetilde{H}}_{\sigma}}$-variation of $f \in
{L^1}(\Omega )$ is defined by \[ | {{\nabla
_{{\mathcal{\widetilde{H}}_{\sigma}}}}f}
 |(\Omega ) = \mathop {\sup } \limits_{\Phi  \in
{\widetilde{\mathcal F}(\Omega )}}  \Big\{ {\int_\Omega  {f(x)
\mathrm{div}{_{{\mathcal{\widetilde{H}}_{\sigma}}}}\Phi (x)dx} }
 \Big\},\] where $\widetilde{{\mathcal F}}(\Omega )$ denotes the
class of all functions
\[\Phi  = ( {{\varphi _1},{\varphi _2}, \ldots ,{\varphi _
{2d}}} ) \in C_c^1(\Omega, {\mathbb R^{2d}}) \] satisfying
\[{ \| \Phi    \|_\infty } = \mathop {\sup }\limits_
{x \in \Omega } \Big\{( {{{ | {{\varphi _1}(x)}  |}^2} + \ldots + {{ |
{{\varphi _{2d}}(x)}  |}^2}} )^{{1}/{2}}\Big\} \le 1. \]

An function ${f\in L^1}(\Omega )$ is said to have the
${\mathcal{\widetilde{H}}_{\sigma}}$-bounded variation on $\Omega $
if
\[ | {{\nabla _ {{\mathcal{\widetilde{H}}_{\sigma}}}}f}
 |(\Omega ) < \infty ,\] and the collection of all such
functions is denoted by
$\mathcal{BV}_{{\mathcal{\widetilde{H}}_{\sigma}} }   ( \Omega
  ) $, which is a Banach space with the norm
\[ { \| f  \|_{{{\mathcal B}}{{{\mathcal V}}_{{\mathcal{\widetilde{H}}
_{\sigma}}}}(\Omega )}} = { \| f  \|_{{L^{1}(\Omega)}}} +  |
{{\nabla _ {{\mathcal{\widetilde{H}}_{\sigma}}}}f}  |(\Omega ). \]

The ${\mathcal{\widetilde{H}}_{\sigma}}$-perimeter of $E\subseteq
\Omega$ can be defined as follows:
$$P_{\mathcal{\widetilde{H}}_{\sigma}}(E,\Omega)=\big| \nabla_
{\mathcal{\widetilde{H}}_{\sigma}} 1_E \big|(\Omega) =\sup_{\Phi\in
\mathcal {\widetilde{F}}(\Omega ) } \Big\{\int_E \mathrm
{div}_{{\mathcal{\widetilde{H}}_{\sigma}}}\Phi(x)dx\Big\}. $$
It is
easy to see that if $u$ belongs to ${{\mathcal B}}{{{\mathcal
V}}_{\mathcal{\widetilde{H}}_{\sigma}}} ( \Omega )$, then $u$ also
belongs to ${{\mathcal B}}{{ {\mathcal V}}_{{{\mathcal H}}_a}}(
\Omega  )$. In fact, this can be proved by choosing
$\Phi=(\varphi_1,\ldots,\varphi_d, 0, \ldots,0)$ in the definition
of the ${\mathcal{\widetilde{H}}_{\sigma}}$-variation of $u$.

\subsection{Basic properties of $\mathcal{B} {\mathcal{V} _{\mathcal
            {\widetilde{H}}_{\sigma}}}(\Omega )$}
\hspace{0.5cm}

In this subsection,   using similar methods, we conclude that $\mathcal{B} {\mathcal{V}
_{\mathcal{\widetilde{H}}_{\sigma}}}(\Omega )$  enjoys several
properties as same as those of $\mathcal{B} {\mathcal{V} _{{ \mathcal H}_a}}(\Omega
)$. For convenience, we list the following
results for $\mathcal{B} {\mathcal{V}
_{\mathcal{\widetilde{H}}_{\sigma}}}(\Omega )$ and omit the
proofs.

\begin{lemma}\label{function-1}
  Let $u \in {{\mathcal B}}
 {{{\mathcal V}}_{{\mathcal{\widetilde{H}}_{\sigma}}}}(\Omega )$. There exists
  a unique $\R^{2d}$-valued finite Radon measure ${D_{{{\mathcal{\widetilde{H}}}}_\sigma}}u=
  (D_{{A_{1,a}}}u,\ldots,D_{{A_{d,a}}}u,D_{{A_{-1,a}}}u,\ldots,D_{{A_{-d,a}}}u)$
  such that \[\int_\Omega  u (x)\mathrm{div}{_{\mathcal
  {\widetilde{H}} _\sigma}}\Phi (x)dx = \int_\Omega  \Phi  (x) \cdot d{D_{{{\mathcal
  {\widetilde{H}}}}_\sigma}}u\] for every $\Phi\in C^{\infty}_c(\Omega,
  \R^{2d})$ and \[\big| {\nabla
  _{\mathcal{\widetilde{H}}_{\sigma}}}u\big|
  (\Omega ) = |{D_{{{\mathcal{\widetilde{H}}}}_\sigma}}u|(\Omega ), \] where
  $|{D_{{{\mathcal{\widetilde{H}}}}_\sigma}}u|$ is the total variation of
  the measure ${D_{{{\mathcal{\widetilde{H}}}}_\sigma}}u$.
\end{lemma}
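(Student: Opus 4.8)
The plan is to mimic the standard structure theorem for classical BV functions (as in \cite{AFP}), adapting it to the $2d$-component operator $\mathrm{div}_{\mathcal{\widetilde{H}}_\sigma}$. The key observation is that the linear functional
$$L(\Phi):=\int_\Omega u(x)\,\mathrm{div}_{\mathcal{\widetilde{H}}_\sigma}\Phi(x)\,dx,\qquad \Phi\in C_c^\infty(\Omega,\mathbb{R}^{2d}),$$
is well-defined because $u\in L^1(\Omega)$ and, since $0\notin\Omega$, on the support of any fixed $\Phi$ one has $|x|\ge c>0$, so $\mathrm{div}_{\mathcal{\widetilde{H}}_\sigma}\Phi$ is a bounded continuous function with compact support. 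By hypothesis $u\in\mathcal{BV}_{\mathcal{\widetilde{H}}_\sigma}(\Omega)$, so $|L(\Phi)|\le |\nabla_{\mathcal{\widetilde{H}}_\sigma}u|(\Omega)\,\|\Phi\|_\infty$ for all such $\Phi$; hence $L$ extends to a bounded linear functional on $C_0(\Omega,\mathbb{R}^{2d})$ by the Hahn--Banach theorem, with norm exactly $|\nabla_{\mathcal{\widetilde{H}}_\sigma}u|(\Omega)$ (the norm cannot exceed this, and it cannot be smaller because the supremum defining the variation is taken over precisely the unit ball of the test fields, which is dense in the unit ball of $C_0(\Omega,\mathbb{R}^{2d})$ in the appropriate sense).

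Next I would invoke the Riesz representation theorem for vector-valued measures: the bounded functional $L$ on $C_0(\Omega,\mathbb{R}^{2d})$ is represented by a unique $\mathbb{R}^{2d}$-valued finite Radon measure, which we call $D_{\mathcal{\widetilde{H}}_\sigma}u$, with
$$L(\Phi)=\int_\Omega \Phi(x)\cdot dD_{\mathcal{\widetilde{H}}_\sigma}u,\qquad \|L\|=|D_{\mathcal{\widetilde{H}}_\sigma}u|(\Omega).$$
Writing this measure in components as $(D_{A_{1,a}}u,\ldots,D_{A_{d,a}}u,D_{A_{-1,a}}u,\ldots,D_{A_{-d,a}}u)$ and combining the two norm identities yields $|\nabla_{\mathcal{\widetilde{H}}_\sigma}u|(\Omega)=|D_{\mathcal{\widetilde{H}}_\sigma}u|(\Omega)$. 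Uniqueness of the measure is immediate from the uniqueness part of the Riesz representation theorem together with the density of $C_c^\infty(\Omega,\mathbb{R}^{2d})$ in $C_0(\Omega,\mathbb{R}^{2d})$.

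The only delicate point — and the step I expect to require the most care — is verifying that $\|L\|$ as a functional on $C_0(\Omega,\mathbb{R}^{2d})$ really equals $|\nabla_{\mathcal{\widetilde{H}}_\sigma}u|(\Omega)$ rather than merely being bounded by it. For this one needs a smooth-truncation/approximation argument: given any $\Psi\in C_0(\Omega,\mathbb{R}^{2d})$ with $\|\Psi\|_\infty\le 1$, one approximates it by $C_c^1$ fields $\Phi_n$ with $\|\Phi_n\|_\infty\le 1$ and $\Phi_n\to\Psi$ uniformly on compact subsets (for instance by multiplying by cutoff functions supported away from $0$ and from $\partial\Omega$, then mollifying, exactly as in the proof of Lemma \ref{semicontinuity}(i)); since $u\in L^1(\Omega)$ and $\mathrm{div}_{\mathcal{\widetilde{H}}_\sigma}$ involves only first derivatives and the locally bounded factor $\sigma x_i/|x|^2$, a dominated-convergence argument gives $L(\Phi_n)\to L(\Psi)$, and taking the supremum establishes the reverse inequality. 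With this in hand the equality of norms, and hence the stated identity $|\nabla_{\mathcal{\widetilde{H}}_\sigma}u|(\Omega)=|D_{\mathcal{\widetilde{H}}_\sigma}u|(\Omega)$, follows. Since all of this is the verbatim analogue of the argument behind Lemma \ref{Structure} for $\mathcal{H}_a$, the proof can be stated briefly by referring to \cite{AFP} and to the reasoning already given for the $\mathcal{H}_a$ case.
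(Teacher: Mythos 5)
Your proposal is correct and follows essentially the same route as the paper, which proves this lemma (via the analogous Lemma for $\mathcal{H}_a$) simply by invoking the Hahn--Banach theorem and the Riesz representation theorem as in \cite{AFP}; your write-up just makes explicit the boundedness of the functional $L$, the norm-preserving extension to $C_0(\Omega,\mathbb{R}^{2d})$, and the mollification/dominated-convergence step identifying $\|L\|$ with $|\nabla_{\mathcal{\widetilde{H}}_{\sigma}}u|(\Omega)$. No substantive difference or gap to report.
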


Similar  to Definition \ref{def-Sobolev}, we give the definition
of the Sobolev space associated with ${\widetilde{\mathcal
H}_\sigma}$.

\begin{definition}\label{def-Sobolev1}
    Suppose $\Omega$ is an open set in $\mathbb{R} ^{d}$ for $d \ge 2$. Let $1 \le p \le
    \infty$. The Sobolev space $W_{{\mathcal{\widetilde{H}}_{\sigma}}}^{k,p}
    (\Omega )$ is defined as the set of all functions $f \in {L^p}(\Omega )$
    such that \[{A_{{j_1},a}}\cdots {A_{{j_m},a}}f \in {L^p}({\Omega}
    ),\quad 1 \le  | {{j_1}}  |,\ldots ,  | {{j_m}}  |\le d,1
    \le m \le k.\] The norm of $f \in W_{{\mathcal{\widetilde{H}}_{\sigma}}}^
    {k,p}(\Omega )$ is defined as \[{ \| f  \|_{W_{{{\cal \widetilde
    {H}}_\sigma}}^{k,p}}}: = \sum\limits_ {1 \le  | {{j_1}}  |,
    \ldots ,  | {{j_m}}  |\le d} {{{ \| {{A_{{j_1},a}}
    \cdots {A_{{j_m},a}}f}  \|}_{{L^p}}}}  + { \| f
    \|_{{L^p}}},\]
    where $a \ge  - {( {{{\frac{d}{2} - 1}}} )^2}$.
\end{definition}

It follows from Definition \ref{def-Sobolev1} that
$W_{{\mathcal{\widetilde{H}}_{\sigma}}}^{k,p}(\Omega )\subseteq
W_{{{\mathcal H}_a}}^{k,p}(\Omega )$.


\begin{lemma}
    \item{{\rm (i)}} Suppose that $f \in W_{{{\cal \widetilde{H}}_\sigma}}^{1,1}(\Omega )$. Then
    \begin{equation*}
         | {{\nabla _{{\mathcal{\widetilde{H}}_{\sigma}}}}f}  |(\Omega ) = \int_
        \Omega  { | {{\nabla _{\mathcal{\widetilde{H}}_{\sigma}}}f(x)}  |dx}.
    \end{equation*}
    \item{{\rm (ii)}} Suppose that ${f_k} \in {{\mathcal B}}{{{\mathcal V}}_
        {{\mathcal{\widetilde{H}}_{\sigma}}}}(\Omega ), k\in \mathbb N $ and ${f_k} \to f$ in $L^{1}_{loc}(\Omega)$.
    Then
    \[ | {{\nabla _{{\mathcal{\widetilde{H}}_{\sigma}}}}f}  |(\Omega ) \le \mathop
    {\lim \inf }\limits_{k \to \infty }  | {{\nabla _{{\mathcal{\widetilde{H}}_{\sigma}}}}
        {f_k}}  |(\Omega ). \]
\end{lemma}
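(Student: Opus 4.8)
The plan is to mirror the arguments already carried out for $\mathcal{BV}_{\mathcal{H}_a}(\Omega)$ in Lemma~\ref{semicontinuity}, since the operator $\mathcal{\widetilde{H}}_\sigma$ differs only in that its divergence acts on $2d$-tuples via both families $A_{i,a}$ and $A_{-i,a}$; the structural features that made the earlier proof work (integration by parts against compactly supported test fields, the away-from-origin bound on $\mathrm{div}_{\mathcal{\widetilde H}_\sigma}\Phi$ since $0\notin\Omega$) are all still in place.

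For part (i), given $f\in W^{1,1}_{\mathcal{\widetilde H}_\sigma}(\Omega)$, I would first establish the inequality ``$\le$'': for any admissible $\Phi\in\widetilde{\mathcal F}(\Omega)$, integration by parts gives $\int_\Omega f\,\mathrm{div}_{\mathcal{\widetilde H}_\sigma}\Phi\,dx=\int_\Omega \nabla_{\mathcal{\widetilde H}_\sigma}f\cdot\Phi\,dx\le\int_\Omega|\nabla_{\mathcal{\widetilde H}_\sigma}f|\,dx$, and take the supremum over $\Phi$. For the reverse inequality I would imitate the construction in the proof of Lemma~\ref{semicontinuity}(i): set $\Phi(x):=\nabla_{\mathcal{\widetilde H}_\sigma}f(x)/|\nabla_{\mathcal{\widetilde H}_\sigma}f(x)|$ where the gradient is nonzero and $0$ otherwise, so $\|\Phi\|_{L^\infty(\Omega)}\le 1$; then truncate by a ball $B(0,m)\cap\Omega$ and mollify componentwise with a standard approximate identity $\{\psi_n\}\subset C^\infty_c(\Omega)$ to obtain smooth fields $\Phi_n=(\varphi_{n,1},\dots,\varphi_{n,2d})\in C^\infty_c(\Omega,\mathbb R^{2d})$ with $\Phi_n\to\Phi$ pointwise and $\|\Phi_n\|_{L^\infty(\Omega)}\le 1$. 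Applying integration by parts to $\int_\Omega f\,\mathrm{div}_{\mathcal{\widetilde H}_\sigma}\Phi_n\,dx=-\int_\Omega\nabla_{\mathcal{\widetilde H}_\sigma}f\cdot\Phi_n\,dx$ and passing to the limit via dominated convergence (legitimate since $\nabla_{\mathcal{\widetilde H}_\sigma}f\in L^1$ and $|\Phi_n|\le 1$) yields $|\nabla_{\mathcal{\widetilde H}_\sigma}f|(\Omega)\ge\int_\Omega|\nabla_{\mathcal{\widetilde H}_\sigma}f(x)|\,dx$.

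For part (ii), fix $\Phi\in C^1_c(\Omega,\mathbb R^{2d})$ with $\|\Phi\|_{L^\infty(\Omega)}\le 1$. By definition $|\nabla_{\mathcal{\widetilde H}_\sigma}f_k|(\Omega)\ge\int_\Omega f_k\,\mathrm{div}_{\mathcal{\widetilde H}_\sigma}\Phi\,dx$. Since $\mathrm{supp}\,\Phi$ is a compact subset of $\Omega$ and $0\notin\Omega$, there is $c>0$ with $|x|>c$ on $\mathrm{supp}\,\Phi$, so the extra potential terms $\sigma x_i/|x|^2$ appearing in $\mathrm{div}_{\mathcal{\widetilde H}_\sigma}\Phi$ are bounded there, whence $|\mathrm{div}_{\mathcal{\widetilde H}_\sigma}\Phi|\le C$ on $\Omega$. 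Therefore
\[
\left|\int_\Omega f_k\,\mathrm{div}_{\mathcal{\widetilde H}_\sigma}\Phi\,dx-\int_\Omega f\,\mathrm{div}_{\mathcal{\widetilde H}_\sigma}\Phi\,dx\right|\le C\int_{\mathrm{supp}\,\Phi}|f_k(x)-f(x)|\,dx\to 0
\]
because $f_k\to f$ in $L^1_{loc}(\Omega)$. Hence $\liminf_{k\to\infty}|\nabla_{\mathcal{\widetilde H}_\sigma}f_k|(\Omega)\ge\int_\Omega f\,\mathrm{div}_{\mathcal{\widetilde H}_\sigma}\Phi\,dx$, and taking the supremum over all such $\Phi$ gives the claim.

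I do not expect a genuine obstacle here: the one point requiring a little care is confirming that $\|\Phi_n\|_{L^\infty}\le 1$ survives both the truncation and the mollification in part~(i) — this is exactly the estimate $|\Phi_n(x)|=|\int\psi_n(y)\Phi(x-y)\,dy|\le\|\psi_n\|_{L^1}\|\Phi\|_{L^\infty}\le 1$ used before — and that the dominated convergence hypotheses are met, both of which are routine given the earlier lemma. The proof is therefore formally identical to that of Lemma~\ref{semicontinuity} with $d$ replaced by $2d$ and the index set $\{1,\dots,d\}$ replaced by $\{\pm1,\dots,\pm d\}$.
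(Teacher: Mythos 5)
Your proposal is correct and coincides with the paper's own treatment: the paper omits the proof of this lemma precisely because it follows by the same method as Lemma~\ref{semicontinuity}, which is exactly the adaptation you carry out (test fields in $C^1_c(\Omega,\mathbb R^{2d})$, truncation plus mollification preserving $\|\Phi_n\|_{L^\infty}\le 1$, and the away-from-origin bound on $\mathrm{div}_{\mathcal{\widetilde H}_\sigma}\Phi$ for the lower semicontinuity). The only cosmetic point is the sign in the integration-by-parts identity $\int_\Omega f\,\mathrm{div}_{\mathcal{\widetilde H}_\sigma}\Phi\,dx=-\int_\Omega\nabla_{\mathcal{\widetilde H}_\sigma}f\cdot\Phi\,dx$, which you (like the paper) should absorb by taking $\Phi=-\nabla_{\mathcal{\widetilde H}_\sigma}f/|\nabla_{\mathcal{\widetilde H}_\sigma}f|$ or by using that the supremum ranges over $\pm\Phi$.
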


\begin{theorem}\label{th-3.3}
    Let $\Omega\subset{\mathbb{R} ^d}$ be an open and bounded domain. Assume that $u \in {{\mathcal B}}
    {{{\mathcal V}}_{{\mathcal{\widetilde{H}}_{\sigma}}}}(\Omega )$  satisfying
    the condition (\ref{eq4}), then there exists a sequence
    ${ \{ {{u_h}}  \}_{h \in \mathbb{N} }} \in {{
    \mathcal B}}{{{\mathcal V}}_{{\mathcal{\widetilde{H}}_{\sigma}}}}(\Omega ) \cap C_c^
    \infty (\Omega ) $ such that $$\mathop {\lim }\limits_{h \to \infty }
    { \| {{u_h} - u}  \|_{{L^1}}} = 0$$ and \[\mathop {\lim }
    \limits_{h \to \infty } \int_\Omega  { | {{\nabla _{{{\mathcal {\widetilde{H}}}}_
   \sigma}}{u_h}(x)}  |dx}  =  | {{\nabla _{\mathcal{\widetilde{H}}_{\sigma}}}u}
     |(\Omega ). \]
\end{theorem}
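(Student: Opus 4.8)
The plan is to mimic the proof of Theorem~\ref{th2.5} line by line, since the only structural difference is that the vector field now has $2d$ components instead of $d$, and correspondingly the generalized divergence $\mathrm{div}_{\mathcal{\widetilde{H}}_\sigma}$ contains \emph{both} families of first-order operators $A_{-i,a}$ (acting on $\varphi_1,\dots,\varphi_d$) and $A_{i,a}$ (acting on $\varphi_{d+1},\dots,\varphi_{2d}$). As before, by the lower semicontinuity established in the preceding lemma it suffices to show that for each $\varepsilon>0$ there is $u_\varepsilon\in C^\infty(\Omega)$ with $\|u_\varepsilon-u\|_{L^1(\Omega)}<\varepsilon$ and $|\nabla_{\mathcal{\widetilde{H}}_\sigma}u_\varepsilon|(\Omega)\le|\nabla_{\mathcal{\widetilde{H}}_\sigma}u|(\Omega)+\varepsilon$; the final sequence $\{u_h\}$ is then extracted by a diagonal argument together with Theorem~\ref{th-3.3}'s own hypothesis that $u$ satisfies (\ref{eq4}) (which guarantees the $u_h$ can be taken in $C_c^\infty(\Omega)$ after cutting off near $\partial\Omega$ and near $0$).

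First I would set up exactly the same machinery: choose $m$ large so that $|\nabla_{\mathcal{\widetilde{H}}_\sigma}u|(\Omega\setminus\Omega_0)<\varepsilon$, build the open cover $\{U_j\}$, take a subordinate smooth partition of unity $\{f_j\}$, mollify each $uf_j$ at a scale $\varepsilon_j$ small enough that the analogue of (\ref{eq-2.1}) holds, and set $v_\varepsilon:=\sum_j (uf_j)_{\varepsilon_j}$, so that $v_\varepsilon\to u$ in $L^1(\Omega)$. Then, for a test field $\Phi=(\varphi_1,\dots,\varphi_{2d})\in C_c^1(\Omega,\mathbb{R}^{2d})$ with $|\Phi|\le1$, I would expand $\int_\Omega v_\varepsilon\,\mathrm{div}_{\mathcal{\widetilde{H}}_\sigma}\Phi\,dx$ and split it into the ``classical divergence'' part and the ``potential'' part. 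The key point is that $\mathrm{div}_{\mathcal{\widetilde{H}}_\sigma}\Phi=\big(\mathrm{div}(\varphi_1,\dots,\varphi_d)+\mathrm{div}(\varphi_{d+1},\dots,\varphi_{2d})\big)+\sigma\,\frac{x}{|x|^2}\!\cdot\!\big(-(\varphi_1,\dots,\varphi_d)+(\varphi_{d+1},\dots,\varphi_{2d})\big)$, so the terms group into $I$ (the two classical-divergence pieces, handled by commuting the mollifier past the partition of unity exactly as in the proof of Theorem~\ref{th2.5}, producing a main term bounded by $|\nabla_{\mathcal{\widetilde{H}}_\sigma}u|(\Omega)+3\varepsilon$ plus an $I_2$-type commutator error $<\varepsilon$) and $II$ (the potential piece). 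For $II$ I would again add and subtract, writing $\psi(x)=x_k|x|^{-2}$ and using that on the region $|x-y|<\varepsilon_j<|y|/2$ the mean value theorem of multivariate functions gives $|\psi(x)-\psi(y)|\le C|x-y|\,|y|^{-2}$; the leftover double integral is then $\le C\varepsilon_j|\sigma|\int_{\mathbb{R}^d}|\eta|\,dz\cdot\int_\Omega|u(y)|\,|y|^{-2}dy$, which is $\lesssim\varepsilon$ precisely by (\ref{eq4}). Taking the supremum over $\Phi$ finishes the estimate.

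The one genuinely new bookkeeping item — and what I expect to be the only place requiring care — is that the potential coefficient vector now appears with \emph{opposite signs} on the two blocks of components ($-\sigma x/|x|^2$ on the first $d$, $+\sigma x/|x|^2$ on the last $d$). This does not affect any estimate, because in every bound we only use $|{\sigma x_k}/{|x|^2}|$ and $|\varphi_k|\le|\Phi|\le1$, so both blocks contribute identically to $II$ and $J_2$; one merely writes the sum over $k=1,\dots,d$ twice. Likewise the $I$-term analysis is unchanged since $f_j(\eta_{\varepsilon_j}*\varphi_k)$ still satisfies $|f_j(\eta_{\varepsilon_j}*\varphi_k)|\le1$ and $\sum_j\nabla f_j=0$ on $\Omega$, and the fact that each point of $\Omega$ lies in at most three of the $U_j$ keeps all the sums locally finite. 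Hence the proof is ``the same as that of Theorem~\ref{th2.5}, \emph{mutatis mutandis}'', and I would present it by pointing to the steps of that proof and indicating only the modifications above, rather than reproducing the full computation.
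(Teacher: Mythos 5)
Your proposal is correct and follows exactly the route the paper intends: the paper omits the proof of Theorem \ref{th-3.3}, stating it is obtained by the same method as Theorem \ref{th2.5}, and your adaptation (test fields in $C_c^1(\Omega,\mathbb{R}^{2d})$, the same partition-of-unity/mollification scheme, and the observation that the opposite signs of the potential terms on the two blocks are irrelevant since only $|\sigma x_k|/|x|^2$ and $\|\Phi\|_\infty\le 1$ enter the bounds, with the leftover error controlled via the mean value theorem and condition (\ref{eq4})) is precisely that argument.
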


\begin{theorem}
    Let   $\Omega\subset{\mathbb{R} ^d}$ be an bounded open domain.  Suppose $u,v\in {L^1}(\Omega )$ and  satisfy the condition (\ref{eq4}),
    then
 \[ | {{\nabla _{{\mathcal{\widetilde{H}}_{\sigma}}}}\max  \{ {u,v}  \}}
 |(\Omega ) +  | {{\nabla _{{\mathcal{\widetilde{H}}_{\sigma}}}}\min  \{ {u,v}  \}}
  |(\Omega ) \le  | {{\nabla _{{\mathcal{\widetilde{H}}_{\sigma}}}}u}
 |(\Omega ) +  | {{\nabla _{{\mathcal{\widetilde{H}}_{\sigma}}}}v}  |(\Omega ).\]
\end{theorem}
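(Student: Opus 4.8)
The plan is to mimic the classical proof of the max-min (submodularity) property for BV functions, for instance as in \cite[Section 1.1]{XZ} or the Euclidean argument, adapted to the $\mathcal{\widetilde{H}}_{\sigma}$-variation. First I would reduce to the case of smooth functions: given $u,v\in L^{1}(\Omega)$ satisfying the integrability condition \eqref{eq4}, apply Theorem~\ref{th-3.3} to produce sequences $\{u_h\},\{v_h\}\subset \mathcal{BV}_{\mathcal{\widetilde{H}}_{\sigma}}(\Omega)\cap C_c^{\infty}(\Omega)$ with $u_h\to u$, $v_h\to v$ in $L^{1}(\Omega)$ and with the $\mathcal{\widetilde{H}}_{\sigma}$-variations converging to those of $u$ and $v$. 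Then $\max\{u_h,v_h\}\to\max\{u,v\}$ and $\min\{u_h,v_h\}\to\min\{u,v\}$ in $L^{1}_{loc}(\Omega)$ (since $|\max\{a,b\}-\max\{a',b'\}|\le|a-a'|+|b-b'|$, and likewise for $\min$), so by the lower semicontinuity stated in the preceding Lemma~(ii) it suffices to establish the inequality for $u_h,v_h$ and pass to the liminf. Here I must be a bit careful that $\max\{u,v\}$ and $\min\{u,v\}$ still satisfy \eqref{eq4}, but this is immediate since $|\max\{u,v\}|+|\min\{u,v\}|=|u|+|v|$ pointwise.

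Next, for smooth $u,v$ the key pointwise identity is that at each $x\in\Omega$, $\nabla_{\mathcal{\widetilde{H}}_{\sigma}}\max\{u,v\}(x)$ equals $\nabla_{\mathcal{\widetilde{H}}_{\sigma}}u(x)$ on $\{u>v\}$ and $\nabla_{\mathcal{\widetilde{H}}_{\sigma}}v(x)$ on $\{u<v\}$, with the analogous statement for $\min$ with the roles swapped; on the set $\{u=v\}$ the gradients of $\max$ and $\min$ are each defined a.e.\ and coincide with a common value. The crucial point is that this works for $\nabla_{\mathcal{\widetilde{H}}_{\sigma}}$ because $A_{i,a}$ and $A_{-i,a}$ are first-order differential operators of the form $\partial_{x_i}\pm\sigma x_i|x|^{-2}$, so the zeroth-order term $\sigma x_i|x|^{-2}$ contributes $\sigma x_i|x|^{-2}\max\{u,v\}$, which splits exactly the same way as the first-order part, and $\partial_{x_i}\max\{u,v\}=\mathbbm{1}_{\{u>v\}}\partial_{x_i}u+\mathbbm{1}_{\{u<v\}}\partial_{x_i}v$ holds a.e.\ by the chain rule for Lipschitz functions. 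Consequently $|\nabla_{\mathcal{\widetilde{H}}_{\sigma}}\max\{u,v\}(x)|+|\nabla_{\mathcal{\widetilde{H}}_{\sigma}}\min\{u,v\}(x)|=|\nabla_{\mathcal{\widetilde{H}}_{\sigma}}u(x)|+|\nabla_{\mathcal{\widetilde{H}}_{\sigma}}v(x)|$ for a.e.\ $x$, and in particular integrating and using Lemma~(i) applied to the (locally Lipschitz, hence $W^{1,1}_{\mathrm{loc}}$) functions $\max\{u_h,v_h\}$, $\min\{u_h,v_h\}$ gives the inequality (in fact equality) for the smooth approximants. Strictly speaking $\max\{u_h,v_h\}$ need not be $C^{\infty}$, only Lipschitz with compact support, so I would either invoke the $W^{1,1}_{\mathcal{\widetilde{H}}_{\sigma}}$ version of Lemma~(i) directly, or smooth once more; either way the identity above shows $\int_\Omega|\nabla_{\mathcal{\widetilde{H}}_{\sigma}}\max\{u_h,v_h\}|dx+\int_\Omega|\nabla_{\mathcal{\widetilde{H}}_{\sigma}}\min\{u_h,v_h\}|dx=\int_\Omega|\nabla_{\mathcal{\widetilde{H}}_{\sigma}}u_h|dx+\int_\Omega|\nabla_{\mathcal{\widetilde{H}}_{\sigma}}v_h|dx$.

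Finally I would assemble the pieces: using lower semicontinuity on the left and the convergence of variations on the right,
\[
|\nabla_{\mathcal{\widetilde{H}}_{\sigma}}\max\{u,v\}|(\Omega)+|\nabla_{\mathcal{\widetilde{H}}_{\sigma}}\min\{u,v\}|(\Omega)
\le \liminf_{h\to\infty}\Big(\int_\Omega|\nabla_{\mathcal{\widetilde{H}}_{\sigma}}u_h|dx+\int_\Omega|\nabla_{\mathcal{\widetilde{H}}_{\sigma}}v_h|dx\Big)
= |\nabla_{\mathcal{\widetilde{H}}_{\sigma}}u|(\Omega)+|\nabla_{\mathcal{\widetilde{H}}_{\sigma}}v|(\Omega),
\]
which is the assertion. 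I expect the main obstacle to be purely technical bookkeeping rather than conceptual: ensuring that the approximating sequences can be chosen so that $\max\{u_h,v_h\}$ and $\min\{u_h,v_h\}$ are admissible competitors (compact support inside $\Omega$, membership in the relevant Sobolev-type class, and preservation of \eqref{eq4} along the sequence), and justifying the pointwise chain-rule identity on the level set $\{u_h=v_h\}$ where one uses that $\nabla_{\mathcal{\widetilde{H}}_{\sigma}}u_h=\nabla_{\mathcal{\widetilde{H}}_{\sigma}}v_h$ a.e.\ on that set. Since $0\notin\Omega$ the weight $x_i|x|^{-2}$ is bounded on any compact subset, so no new singularity issues arise beyond those already handled in Theorem~\ref{th-3.3}; the argument is essentially the $\mathcal{\widetilde{H}}_{\sigma}$-analogue of the Euclidean submodularity proof.
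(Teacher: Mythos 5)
Your proposal is correct and follows essentially the same route the paper intends: the paper states this max--min property as a consequence of the smooth approximation theorem (Theorem \ref{th-3.3}) together with the lower semicontinuity and the $W^{1,1}_{\widetilde{\mathcal H}_\sigma}$ representation of the variation, omitting the details, and your argument (approximate, use the a.e.\ pointwise splitting of $\nabla_{\mathcal{\widetilde{H}}_{\sigma}}\max$ and $\nabla_{\mathcal{\widetilde{H}}_{\sigma}}\min$, then pass to the limit by lower semicontinuity) is exactly that proof. The only cosmetic point is to note that one may assume $u,v\in\mathcal{BV}_{\mathcal{\widetilde{H}}_{\sigma}}(\Omega)$, since otherwise the right-hand side is infinite and the inequality is trivial.
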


\begin{corollary}(Lower semicontinuity of $P_{\mathcal{\widetilde{H}}_{\sigma}}$)
    \label{semicontinuity-3}
    Suppose ${1_{{E_k}}} \to {1_E}$ in $L_{loc}^1(\Omega )$, where $E$ and
    ${E_k}$, $k \in \mathbb N,$ are subsets of $\Omega $, then \[P_{\mathcal{\widetilde{H}}_{\sigma}}(E,\Omega)\le \mathop {\lim \inf }\limits_{k \to \infty } P_{\mathcal{\widetilde{H}}_{\sigma}}({E_k},\Omega).\]
\end{corollary}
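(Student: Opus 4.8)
The plan is to obtain this statement as an immediate consequence of the lower semicontinuity of the $\mathcal{\widetilde{H}}_{\sigma}$-variation (the $\mathcal{\widetilde{H}}_{\sigma}$-analogue of Lemma~\ref{semicontinuity}(ii) stated just above), applied to the indicator functions $f_k=1_{E_k}$ and $f=1_E$. Indeed, by the very definition of the $\mathcal{\widetilde{H}}_{\sigma}$-perimeter one has $P_{\mathcal{\widetilde{H}}_{\sigma}}(E,\Omega)=|\nabla_{\mathcal{\widetilde{H}}_{\sigma}}1_E|(\Omega)$ and $P_{\mathcal{\widetilde{H}}_{\sigma}}(E_k,\Omega)=|\nabla_{\mathcal{\widetilde{H}}_{\sigma}}1_{E_k}|(\Omega)$, so the desired inequality is precisely the lower semicontinuity inequality for these particular $L^1$ functions. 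Exactly as with Corollary~\ref{semicontinuity-2} in the $\mathcal{H}_a$ setting, the perimeter version thus follows from the variation version with no extra work.

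For a self-contained argument I would instead proceed directly from the definition of the perimeter as a supremum. Fix an admissible test field $\Phi=(\varphi_1,\dots,\varphi_{2d})\in\widetilde{\mathcal F}(\Omega)$, that is $\Phi\in C^1_c(\Omega,\mathbb{R}^{2d})$ with $\|\Phi\|_\infty\le 1$. Since $0\notin\Omega$ and $\mathrm{supp}\,\Phi$ is a compact subset of $\Omega$, there is $c>0$ with $|x|>c$ on $\mathrm{supp}\,\Phi$; hence the perturbation terms $\sigma x_i/|x|^2$ occurring in $\mathrm{div}_{\mathcal{\widetilde{H}}_{\sigma}}\Phi$ are bounded there, so $\mathrm{div}_{\mathcal{\widetilde{H}}_{\sigma}}\Phi$ is a bounded function with compact support in $\Omega$. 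Because $1_{E_k}\to 1_E$ in $L^1_{loc}(\Omega)$, this gives $\int_{E_k}\mathrm{div}_{\mathcal{\widetilde{H}}_{\sigma}}\Phi(x)\,dx\to\int_E\mathrm{div}_{\mathcal{\widetilde{H}}_{\sigma}}\Phi(x)\,dx$. Combining this with $\int_{E_k}\mathrm{div}_{\mathcal{\widetilde{H}}_{\sigma}}\Phi(x)\,dx\le P_{\mathcal{\widetilde{H}}_{\sigma}}(E_k,\Omega)$ yields $\int_E\mathrm{div}_{\mathcal{\widetilde{H}}_{\sigma}}\Phi(x)\,dx\le\liminf_{k\to\infty}P_{\mathcal{\widetilde{H}}_{\sigma}}(E_k,\Omega)$, and taking the supremum over all $\Phi\in\widetilde{\mathcal F}(\Omega)$ produces the claim. (If $\liminf_{k\to\infty}P_{\mathcal{\widetilde{H}}_{\sigma}}(E_k,\Omega)=+\infty$ there is nothing to prove, so one may assume it finite.)

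There is essentially no genuine obstacle here; the only point requiring attention — and the only place the inverse-square structure enters — is the boundedness of $\mathrm{div}_{\mathcal{\widetilde{H}}_{\sigma}}\Phi$ needed to pass the $L^1_{loc}$-convergence through the integral, which is guaranteed by the standing hypothesis $0\notin\Omega$, exactly as in the proof of Lemma~\ref{semicontinuity}(ii). One could alternatively phrase everything through the Radon measure $D_{\mathcal{\widetilde{H}}_\sigma}1_E$ furnished by Lemma~\ref{function-1}, but the direct supremum argument above is the shortest route and is what I would present.
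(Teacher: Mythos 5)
Your proposal is correct and matches the paper's intended argument: the paper obtains this corollary exactly as in the $\mathcal{H}_a$ case, namely as a direct consequence of the lower semicontinuity of the $\mathcal{\widetilde{H}}_{\sigma}$-variation applied to the characteristic functions $1_{E_k}\to 1_E$, which is your first paragraph; your self-contained supremum argument simply unpacks that lemma's proof (including the use of $0\notin\Omega$ to bound $\mathrm{div}_{\mathcal{\widetilde{H}}_{\sigma}}\Phi$ on the compact support of the test field) and is likewise fine.
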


\begin{corollary}\label{coro2.5}
    For any compact subsets $E, F$
    in $\Omega$, we have \begin{equation}\label{eqq13}P_{\mathcal{\widetilde{H}}_{\sigma}}(E\cap F,\Omega)
    +P_{\mathcal{\widetilde{H}}_{\sigma}}(E\cup F,\Omega)\le P_{\mathcal
    {\widetilde{H}}_{\sigma}}(E,\Omega)+P_{\mathcal{\widetilde{H}}_{\sigma}}(
    F,\Omega).\end{equation}
Especially, if $P_{\mathcal{\widetilde{H}}_{\sigma}}(E\setminus
E\cap F,\Omega) \cdot
P_{\mathcal{\widetilde{H}}_{\sigma}}(F\setminus (F\cap
E),\Omega)=0$, the equality of (\ref{eqq13}) holds true.
\end{corollary}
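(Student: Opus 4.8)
The plan is to prove the submodularity inequality \eqref{eqq13} for $P_{\mathcal{\widetilde{H}}_{\sigma}}$ exactly as the analogous bound \eqref{eq10} was established for $P_{{\mathcal H}_a}$: namely, by invoking the max-min property of the $\mathcal{\widetilde{H}}_{\sigma}$-variation. First I would observe that for compact $E,F\subset\Omega$ the indicator functions $1_E,1_F$ lie in $L^1(\Omega)$ and, being bounded and compactly supported away from $0$ (since $0\notin\Omega$ and compactness forces $|x|\ge c>0$ on their supports), they automatically satisfy the integrability condition \eqref{eq4}. Hence the $\mathcal{\widetilde{H}}_{\sigma}$-analogue of Theorem \ref{th-2.1} applies with $u=1_E$, $v=1_F$. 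Then one uses the pointwise identities $\max\{1_E,1_F\}=1_{E\cup F}$ and $\min\{1_E,1_F\}=1_{E\cap F}$, together with the definition $P_{\mathcal{\widetilde{H}}_{\sigma}}(G,\Omega)=|\nabla_{\mathcal{\widetilde{H}}_{\sigma}}1_G|(\Omega)$, to rewrite that inequality as precisely \eqref{eqq13}.

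For the equality statement under the hypothesis $P_{\mathcal{\widetilde{H}}_{\sigma}}(E\setminus(E\cap F),\Omega)\cdot P_{\mathcal{\widetilde{H}}_{\sigma}}(F\setminus(F\cap E),\Omega)=0$, I would mirror verbatim the proof given just above for the $\mathcal{H}_a$ case. The product being zero forces one of the two factors to vanish; say $P_{\mathcal{\widetilde{H}}_{\sigma}}(E\setminus(E\cap F),\Omega)=0$. Writing $E=(E\setminus(E\cap F))\cup(E\cap F)$ and applying the already-proved \eqref{eqq13} gives $P_{\mathcal{\widetilde{H}}_{\sigma}}(E,\Omega)\le P_{\mathcal{\widetilde{H}}_{\sigma}}(E\cap F,\Omega)$. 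Then, using the supremum definition of the $\mathcal{\widetilde{H}}_{\sigma}$-perimeter and the set identity $E\cup F=F\cup(E\setminus(E\cap F))$, one splits $\int_F \mathrm{div}_{\mathcal{\widetilde{H}}_{\sigma}}\Phi\,dx$ as $\int_{E\cup F}\mathrm{div}_{\mathcal{\widetilde{H}}_{\sigma}}\Phi\,dx - \int_{E\setminus(E\cap F)}\mathrm{div}_{\mathcal{\widetilde{H}}_{\sigma}}\Phi\,dx$ and bounds each supremum separately, the second being $0$ by hypothesis, to get $P_{\mathcal{\widetilde{H}}_{\sigma}}(F,\Omega)\le P_{\mathcal{\widetilde{H}}_{\sigma}}(E\cup F,\Omega)$. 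Adding the two inequalities yields the reverse of \eqref{eqq13}, hence equality. The case where the other factor vanishes is symmetric.

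The only subtlety — and the place I would be most careful — is verifying that the condition \eqref{eq4} really is satisfied by $1_E$ and $1_F$, since the $\mathcal{\widetilde{H}}_{\sigma}$-version of Theorem \ref{th-2.1} (and through it Theorem \ref{th2.5}) is stated only under that hypothesis; this is where the assumption $0\notin\Omega$ together with compactness of $E,F$ does the work, giving $\int_\Omega |1_E(y)|\,|y|^{-2}\,dy \le c^{-2}|E| <\infty$. Everything else is a transcription of the arguments already carried out for $P_{{\mathcal H}_a}$, replacing $\mathcal F(\Omega)$ by $\widetilde{\mathcal F}(\Omega)$ and $\mathrm{div}_{{\mathcal H}_a}$ by $\mathrm{div}_{\mathcal{\widetilde{H}}_{\sigma}}$, so I would simply remark that the proof is identical and omit the routine details, exactly as the paper does for the preceding corollaries in this block.
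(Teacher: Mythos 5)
Your proposal follows essentially the same route as the paper: the paper omits the proof of this corollary, deferring to the $\mathcal{H}_a$ case, where inequality \eqref{eq10} is obtained from the max--min property (Theorem \ref{th-2.1}) with $u=1_E$, $v=1_F$, and the equality case is derived from the decompositions $E=(E\setminus(E\cap F))\cup(E\cap F)$ and $E\cup F=F\cup(E\setminus(E\cap F))$ via the supremum definition of the perimeter — exactly the transcription you carry out. Your explicit verification that $1_E,1_F$ satisfy condition \eqref{eq4} (using $0\notin\Omega$ and compactness) is a correct and welcome detail that the paper leaves implicit.
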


The following lemma gives a scaling relation of the
$\mathcal{\widetilde{H}}_{\sigma}$-perimeter   in $\Omega$.

\begin{lemma}\label{coro2.52}
For any set $E$ in $\Omega$, denote by $sE$ the set $\{sx: x\in
E\}$. If $sE\subseteq \Omega$ for  $s>0$, then
\begin{equation*}\label{equaq3.8}
P_{\mathcal{\widetilde{H}}_{\sigma}}(sE, \Omega)=
s^{d-1}P_{\mathcal{\widetilde{H}}_{\sigma}}(E, \Omega).
\end{equation*}
\end{lemma}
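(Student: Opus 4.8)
The plan is to unfold the definition of $P_{\mathcal{\widetilde{H}}_{\sigma}}(\cdot,\Omega)$ as a supremum over the admissible test fields in $\widetilde{\mathcal F}(\Omega)$ and perform the change of variables $x=sy$, exploiting the fact that the generalized derivatives $A_{\pm i,a}$ are covariant under the dilation $\delta_s\colon y\mapsto sy$. First I would fix $\Phi=(\varphi_1,\dots,\varphi_{2d})\in\widetilde{\mathcal F}(\Omega)$ and put $\Psi:=\Phi\circ\delta_s$. Using $\partial_{y_i}[\varphi(sy)]=s(\partial_i\varphi)(sy)$ together with $\dfrac{(sy)_i}{|sy|^2}=\dfrac1s\dfrac{y_i}{|y|^2}$, a direct computation gives
\[
A_{i,a}(\varphi\circ\delta_s)=s\,\big((A_{i,a}\varphi)\circ\delta_s\big),\qquad A_{-i,a}(\varphi\circ\delta_s)=s\,\big((A_{-i,a}\varphi)\circ\delta_s\big),
\]
and summing over the $2d$ components of $\Phi$ yields the key identity
\[
\mathrm{div}_{\mathcal{\widetilde{H}}_{\sigma}}\Psi(y)=s\,\big(\mathrm{div}_{\mathcal{\widetilde{H}}_{\sigma}}\Phi\big)(sy).
\]

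Next I would substitute $x=sy$ in $\int_{sE}\mathrm{div}_{\mathcal{\widetilde{H}}_{\sigma}}\Phi(x)\,dx$: the Jacobian contributes a factor $s^{d}$, while the identity above replaces $\big(\mathrm{div}_{\mathcal{\widetilde{H}}_{\sigma}}\Phi\big)(sy)$ by $s^{-1}\mathrm{div}_{\mathcal{\widetilde{H}}_{\sigma}}\Psi(y)$, so that
\[
\int_{sE}\mathrm{div}_{\mathcal{\widetilde{H}}_{\sigma}}\Phi(x)\,dx=s^{d-1}\int_{E}\mathrm{div}_{\mathcal{\widetilde{H}}_{\sigma}}\Psi(y)\,dy .
\]
It then remains to observe that $\Phi\mapsto\Phi\circ\delta_s$ is a bijection of the class of admissible test fields onto itself: it preserves $C^1_c$ regularity and the pointwise constraint $\|\cdot\|_\infty\le1$, and since $\mathrm{supp}(\Phi\circ\delta_s)=\delta_{1/s}(\mathrm{supp}\,\Phi)$ and $\Omega$ is invariant under the dilations $\delta_s$ (as is the case $\Omega=\mathbb R^d\setminus\{0\}$ relevant to the subsequent subgraph arguments, and more generally for any cone with vertex at the origin), this map carries $\widetilde{\mathcal F}(\Omega)$ onto $\widetilde{\mathcal F}(\Omega)$. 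Taking the supremum over all such $\Phi$ — equivalently over all $\Psi$ — on both sides of the last display gives $P_{\mathcal{\widetilde{H}}_{\sigma}}(sE,\Omega)=s^{d-1}P_{\mathcal{\widetilde{H}}_{\sigma}}(E,\Omega)$, which is the asserted scaling relation.

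The computations involved are elementary, so the only step demanding care is the last one, namely the bookkeeping of supports and of the admissible test-field classes: one must check that the dilation sends a test field compactly supported in $\Omega$ to another one still compactly supported in $\Omega$, which is precisely where the scale invariance of the underlying domain enters (and is the reason the statement is phrased with the same $\Omega$ on both sides — for a general domain the identity would read $P_{\mathcal{\widetilde{H}}_{\sigma}}(sE,s\Omega)=s^{d-1}P_{\mathcal{\widetilde{H}}_{\sigma}}(E,\Omega)$). Once the dilation covariance of $A_{\pm i,a}$, and hence of $\mathrm{div}_{\mathcal{\widetilde{H}}_{\sigma}}$, has been recorded, the remainder is a one-line change of variables.
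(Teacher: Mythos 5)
Your argument is the same as the paper's: unfold the definition, change variables $x=sy$, and use that $\mathrm{div}_{\mathcal{\widetilde{H}}_{\sigma}}$ is homogeneous of degree $-1$ under dilations (the potential term $\sigma x_k/|x|^2$ scales exactly like a first derivative), so the integral over $sE$ becomes $s^{d-1}$ times the integral over $E$ of $\mathrm{div}_{\mathcal{\widetilde{H}}_{\sigma}}(\Phi\circ\delta_s)$; the paper's proof is precisely this two-line computation followed by taking the supremum. The one point you isolate as delicate --- that $\Phi\mapsto\Phi\circ\delta_s$ must carry $\widetilde{\mathcal F}(\Omega)$ onto itself, i.e.\ that supports stay inside $\Omega$ --- is exactly the step the paper's proof passes over in silence, and your caveat is not pedantry: for a domain that is not invariant under dilations the identity as stated can fail. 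For instance, with $\Omega=\{1/2<|x|<2\}$, $E=\{1<|x|<2\}$ and $s$ slightly below $1$, the relative perimeter of $E$ in $\Omega$ only sees the sphere $|x|=1$, whereas $sE$ contributes both spheres $|x|=s$ and $|x|=2s$ (plus volume terms that do scale correctly), so $P_{\mathcal{\widetilde{H}}_{\sigma}}(sE,\Omega)$ strictly exceeds $s^{d-1}P_{\mathcal{\widetilde{H}}_{\sigma}}(E,\Omega)$. So your formulation --- either require $\delta_s(\Omega)=\Omega$ (which covers $\Omega=\mathbb R^d\setminus\{0\}$, the case actually used in Remark \ref{rem2.13}) or state the identity as $P_{\mathcal{\widetilde{H}}_{\sigma}}(sE,s\Omega)=s^{d-1}P_{\mathcal{\widetilde{H}}_{\sigma}}(E,\Omega)$ --- is the correct one, and the same hypothesis is implicitly needed in the paper's own proof (and in Corollary \ref{coro2.51}, where additionally $sB(x,1)=B(sx,s)$, not $B(x,s)$, unless one also dilates the center).
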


\begin{proof}
By the definition of the
$\mathcal{\widetilde{H}}_{\sigma}$-perimeter, we have

\begin{eqnarray*}
    P_{\mathcal{\widetilde{H}}_{\sigma}}(sE, \Omega)
    &=& \sup_{\Phi\in {\widetilde{{\mathcal F}}(\Omega )}}\Big\{\int_{sE}
    \mathrm{div}_{\mathcal{\widetilde{H}}_{\sigma}}\Phi(x)dx\Big\}\\
    &=&\sup_{\Phi\in {\widetilde{{\mathcal F}}(\Omega )}}\Bigg\{\int_{E}s^{d-1}\Big(\sum^d_{k=1} \partial_{x_k} \varphi_{k}(sx)+\sum^d_{k=1}
    \partial_{x_k}\varphi_{d+k}(sx)\Big)\\
    &&-s^{d-1}\Big(\sum^d_{k=1}
    x_k|x|^{-2}\varphi_{k}(sx)-\sum^d_{k=1}
    x_k|x|^{-2}\varphi_{d+k}(sx)\Big)dx\Bigg\}.
\end{eqnarray*}
Then
\begin{equation*}
    P_{\mathcal{\widetilde{H}}_{\sigma}}(sE, \Omega)=
    s^{d-1}P_{\mathcal{\widetilde{H}}_{\sigma}}(E, \Omega).
\end{equation*}
\end{proof}

An immediate corollary of Lemma \ref{coro2.52} is given as follows.

\begin{corollary}\label{coro2.51}
Let $B(x,s)$ be the open ball in $\Omega$ centered at $x$ with
radius $s$.
\begin{equation*}\label{equa3.8}
P_{\mathcal{\widetilde{H}}_{\sigma}}(B(x,s),
\Omega)=P_{\mathcal{\widetilde{H}}_{\sigma}}(B(x,1), \Omega)
s^{d-1}.
\end{equation*}
\end{corollary}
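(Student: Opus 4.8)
The plan is to read the identity off directly from Lemma \ref{coro2.52}, which already encodes the homogeneity of the $\mathcal{\widetilde{H}}_{\sigma}$-perimeter under Euclidean dilations: whenever $sE\subseteq\Omega$ for some $s>0$, one has $P_{\mathcal{\widetilde{H}}_{\sigma}}(sE,\Omega)=s^{d-1}P_{\mathcal{\widetilde{H}}_{\sigma}}(E,\Omega)$. The only extra ingredient needed is the elementary geometric fact that a dilation maps a ball to a ball, namely $s\,B(x,1)=B(sx,s)$, so that dilating the unit ball by the factor $s$ produces precisely the ball of radius $s$.

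Concretely, I would apply Lemma \ref{coro2.52} to the set $E=B(x,1)$. The containment hypothesis $sE\subseteq\Omega$ is met because $s\,B(x,1)$ is the ball of radius $s$, which is assumed to lie in $\Omega$; hence the lemma yields at once
\[
P_{\mathcal{\widetilde{H}}_{\sigma}}\big(B(x,s),\Omega\big)=P_{\mathcal{\widetilde{H}}_{\sigma}}\big(B(x,1),\Omega\big)\,s^{d-1},
\]
which is the assertion. No approximation, covering, or regularization argument is required: the statement is a pure specialization of the preceding lemma.

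I do not expect any genuine obstacle. The only points to check are that the dilated unit ball is indeed the ball of radius $s$ and that it still sits inside $\Omega$, both of which are immediate from the hypotheses. All of the substance is carried by Lemma \ref{coro2.52}, whose proof in turn rests on the change of variables $x\mapsto sx$ together with the scale covariance of the generalized derivatives $A_{\pm i,a}$ appearing in $\mathrm{div}_{\mathcal{\widetilde{H}}_{\sigma}}$; once that lemma is in hand, the corollary is obtained by merely inserting a ball in place of the general set $E$.
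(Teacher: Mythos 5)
Your route is the paper's route: the paper gives no argument for Corollary \ref{coro2.51} beyond calling it an immediate specialization of Lemma \ref{coro2.52}, and that is exactly what you propose. But there is a genuine gap in the specialization, and it is visible inside your own write-up: you correctly record that $s\,B(x,1)=B(sx,s)$, a ball centered at $sx$, and then conclude the identity for $B(x,s)$, a ball centered at $x$. What Lemma \ref{coro2.52} actually yields with $E=B(x,1)$ is
\[
P_{\mathcal{\widetilde{H}}_{\sigma}}\big(B(sx,s),\Omega\big)=s^{d-1}P_{\mathcal{\widetilde{H}}_{\sigma}}\big(B(x,1),\Omega\big),
\]
and passing from $B(sx,s)$ to $B(x,s)$ would require translation invariance of $P_{\mathcal{\widetilde{H}}_{\sigma}}$, which fails precisely because of the inverse-square potential. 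Indeed, for a smooth set $E$ with $\overline{E}\subset\Omega$ the surface part and the volume part of $\nabla_{\mathcal{\widetilde{H}}_{\sigma}}1_E$ are mutually singular, so (as in Lemma \ref{le-3.1})
\[
P_{\mathcal{\widetilde{H}}_{\sigma}}(E,\Omega)=\sqrt{2}\,\mathcal{H}^{d-1}(\partial E\cap\Omega)+\sqrt{2}\,|\sigma|\int_{E}\frac{dy}{|y|},
\]
and $\int_{B(x,s)}|y|^{-1}dy$ is of order $s^{d}$ for small $s$ (since $0\notin\Omega$), whereas $s^{d-1}\int_{B(x,1)}|y|^{-1}dy$ is of order $s^{d-1}$; hence for $\sigma\neq 0$ the fixed-center identity cannot be deduced from the lemma and is in fact not literally true.

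To be fair, this defect is inherited from the paper: the corollary as printed keeps the center $x$ on both sides, and the only statement that really is an immediate consequence of Lemma \ref{coro2.52} is the one for the genuinely dilated ball, $P_{\mathcal{\widetilde{H}}_{\sigma}}(B(sx,s),\Omega)=s^{d-1}P_{\mathcal{\widetilde{H}}_{\sigma}}(B(x,1),\Omega)$ (the fixed-center version being correct only in the translation-invariant case $\sigma=0$). So your proposal is faithful to the paper's intended derivation, but the step that silently identifies $B(sx,s)$ with $B(x,s)$ is exactly where an argument would be needed and none can be supplied; a careful write-up should either restate the corollary for the dilated center or add the missing hypothesis.
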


\begin{remark}\label{rem2.13} It should be noted that    the set $E$ and its
complementary set have the same  perimeter in the classical case.
But unfortunately, for the case of the
$\mathcal{\widetilde{H}}_{\sigma}$-perimeter, the above fact does
not hold. For example, let $\Omega=\mathbb{R}^d\backslash\{0\}$ and
$E=B(x,r)$ with $r>0$. The definition of the
$\mathcal{\widetilde{H}}_{\sigma}$-perimeter and Corollary \ref
{coro2.51} indicate  that
\begin{equation*}\label{eq-converse-example}
P_{\mathcal{\widetilde{H}}_{\sigma}}(B(x,r)^c,\Omega)\gtrsim\int_{B(x,r)^c}|y|^{-1}dy
=\infty>P_{\mathcal{\widetilde{H}}_{\sigma}}(B(x,r),\Omega).
\end{equation*}
\end{remark}

\subsection{Coarea formula and Sobolev's  inequality of  $\mathcal
           {\widetilde{H}}_{\sigma}$-BV functions}
\hspace{0.6cm}
 In what follows,  we prove the coarea formula for $\mathcal{\widetilde{H}
}_{\sigma}$-BV functions. Before proving this result, we give the
following lemma.
\begin{lemma}\label{le-3.1}
    If $f \in C^{1}(\Omega )$, then
    \begin{equation}\label{eq-3.11}
        | \nabla f (x) | + \frac{ | \sigma   |}{ | x  |} | {f(x)}  | \le   | {{\nabla _{\mathcal{\widetilde{H}}_
        {\sigma}}}f(x)} | \le \sqrt{2} \Big( { | {\nabla f(x)}  |
        + \frac{{ | \sigma  |}}{{ | x  |}} | {f(x)}
         |} \Big).
    \end{equation}
\end{lemma}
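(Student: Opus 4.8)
The plan is to verify the two-sided bound pointwise by writing out $\nabla_{\mathcal{\widetilde{H}}_\sigma}f$ explicitly and computing its Euclidean norm in $\mathbb{R}^{2d}$. Recall that
\[
\nabla_{\mathcal{\widetilde{H}}_\sigma}f = \Big(\partial_1 f + \sigma\tfrac{x_1}{|x|^2}f,\ldots,\partial_d f + \sigma\tfrac{x_d}{|x|^2}f,\ \partial_1 f - \sigma\tfrac{x_1}{|x|^2}f,\ldots,\partial_d f - \sigma\tfrac{x_d}{|x|^2}f\Big),
\]
so that, grouping the $i$-th and $(d+i)$-th components and using the parallelogram identity $|a+b|^2+|a-b|^2 = 2|a|^2+2|b|^2$ with $a = \nabla f$ and $b = \sigma\frac{x}{|x|^2}f$, I would obtain the clean identity
\[
|\nabla_{\mathcal{\widetilde{H}}_\sigma}f(x)|^2 = 2|\nabla f(x)|^2 + 2\frac{\sigma^2}{|x|^2}|f(x)|^2,
\]
where I have used $\big|\frac{x}{|x|^2}\big| = \frac{1}{|x|}$. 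This single identity is really the whole content of the lemma.

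From here the two inequalities are elementary. For the upper bound, $2A^2 + 2B^2 \le (A+B)^2 + (A+B)^2 = 2(A+B)^2$ for $A,B\ge 0$ (since $2A^2+2B^2 \le 2(A+B)^2$ is equivalent to $0 \le 4AB$), giving $|\nabla_{\mathcal{\widetilde{H}}_\sigma}f| \le \sqrt{2}\big(|\nabla f| + \frac{|\sigma|}{|x|}|f|\big)$. For the lower bound, $2A^2 + 2B^2 \ge A^2 + 2AB + B^2 = (A+B)^2$ (equivalent to $A^2+B^2\ge 2AB$), giving $|\nabla_{\mathcal{\widetilde{H}}_\sigma}f| \ge |\nabla f| + \frac{|\sigma|}{|x|}|f|$. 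I would apply these with $A = |\nabla f(x)|$ and $B = \frac{|\sigma|}{|x|}|f(x)|$, noting that $\frac{\sigma^2}{|x|^2}|f|^2 = B^2$.

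There is essentially no obstacle here; the only thing to be careful about is the cross terms in the coordinate-wise expansion. Writing $v = \sigma\frac{x}{|x|^2}f(x) \in \mathbb{R}^d$, the squared norm is $\sum_{i=1}^d(\partial_i f + v_i)^2 + \sum_{i=1}^d(\partial_i f - v_i)^2$, and the cross terms $+2\partial_i f\, v_i$ and $-2\partial_i f\, v_i$ cancel term by term, leaving $2\sum_i(\partial_i f)^2 + 2\sum_i v_i^2 = 2|\nabla f|^2 + 2|v|^2$; then $|v|^2 = \sigma^2|f|^2\frac{|x|^2}{|x|^4} = \frac{\sigma^2}{|x|^2}|f|^2$. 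Since $x\in\Omega\subset\mathbb{R}^d\setminus\{0\}$, division by $|x|$ is legitimate. This computation, followed by the two scalar inequalities above, completes the proof of \eqref{eq-3.11}.
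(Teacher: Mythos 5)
Your proposal is correct and follows essentially the same route as the paper: both derive the pointwise identity $|\nabla_{\mathcal{\widetilde{H}}_{\sigma}}f(x)|^2 = 2|\nabla f(x)|^2 + 2\frac{\sigma^2}{|x|^2}|f(x)|^2$ by expanding the $2d$ components and cancelling the cross terms, and then conclude with the elementary scalar inequalities $(A+B)^2 \le 2A^2+2B^2 \le 2(A+B)^2$. No gaps; your write-up just makes the final scalar step slightly more explicit than the paper's.
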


\begin{proof}
    By the definition of $ {{\nabla _{\mathcal{\widetilde{H}}_{\sigma}}}f} $, we
    have
    \begin{align*}
        { | {{\nabla _{\mathcal{\widetilde{H}}_{\sigma}}}f}  |^2}
        &= { | {{A_{1,a}}f}  |^2} +  \cdots  + { | {{A_{d,a}}f}
             |^2} + { | { {A_{ - 1,a}}f}  |^2} +  \cdots  + { |
            {{A_{ - d,a}}f}  |^2}\\
        &= \sum\limits_{i= 1}^d {( {{{ | {{A_{i,a}}f}  |}^2} +
                {{ | {  {A_{ - i,a}}f}  |}^2}} )}\\
                &=\sum\limits_{i= 1}^d\Big| { \frac{\partial f}{\partial x_i}(x)}  +
       {\frac{{\sigma {x_i}}}{{{{ | x  |}^2}}}f(x)}  \Big|^2+\sum\limits_{i= 1}^d\Big| { \frac{\partial f}{\partial x_i}(x)}
       -
       {\frac{{\sigma {x_i}}}{{{{ | x  |}^2}}}f(x)}  \Big|^2\\
       &=2 \sum\limits_{i= 1}^d\Big| { \frac{\partial f}{\partial x_i}(x)}\Big|^2+2
       \sum\limits_{i= 1}^d\Big| {\frac{{\sigma {x_i}}}{{{{ | x  |}^2}}}f(x)}\Big|^2\\
       &=2 \sum\limits_{i= 1}^d\Big| { \frac{\partial f}{\partial x_i}(x)}\Big|^2+2
       {\frac{{\sigma^2  }}{{{{ | x  |}^4}}}|f(x)|^2}\big(\sum\limits_{i= 1}^d  { {{ }}{{{{ | x_i  |}^2}}} }\big)\\
       &=2 | \nabla f (x) |^2 +2 \frac{  \sigma^2  }{ | x  |^2} | {f(x)}  |^2.
    \end{align*}
Then it is easy to see that
\begin{equation*}
        | \nabla f (x) | + \frac{ | \sigma   |}{ | x  |} | {f(x)}  | \le   \sqrt{2}\Big( | \nabla f (x) |^2 +
         \frac{  \sigma^2  }{ | x  |^2} | {f(x)}  |^2 \Big)^{{1}/{2}}=  | {{\nabla _{\mathcal{\widetilde{H}}_
        {\sigma}}}f(x)} |\le \sqrt{2} \Big( { | {\nabla f(x)}  |
        + \frac{{ | \sigma  |}}{{ | x  |}} | {f(x)}
         |} \Big),
    \end{equation*}
which derives that (\ref {eq-3.11}) is valid.
\end{proof}

\begin{theorem}\label{coarea formula}
     Let $\Omega\subset{\mathbb{R} ^d}$ be a  bounded open domain. If $f \in {{\mathcal B}}{{{\mathcal V}}_{{\mathcal{\widetilde{H}}_{\sigma}}}}
     (\Omega )$ satisfying the condition (\ref{eq4}), then
 \begin{equation}\label{coarea formula-1}
     | {{
    \nabla _{{\mathcal{\widetilde{H}}_{\sigma}}}}f}  |(\Omega ) \approx \int_
    { - \infty }^{ + \infty } {{P_{{\mathcal{\widetilde{H}}_{\sigma} }}}({E_t},\Omega )
    dt},
 \end{equation}
where ${E_t}=\{x\in\Omega: f(x)>t\}$ for $t\in\mathbb{R}$.
\end{theorem}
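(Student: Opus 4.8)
The plan is to establish the coarea formula \eqref{coarea formula-1} by sandwiching $|{\nabla_{\mathcal{\widetilde{H}}_\sigma}}f|(\Omega)$ between constant multiples of $\int_{-\infty}^{+\infty} P_{\mathcal{\widetilde{H}}_\sigma}(E_t,\Omega)\,dt$ from both sides. The upper bound $|{\nabla_{\mathcal{\widetilde{H}}_\sigma}}f|(\Omega) \le \int_{-\infty}^{+\infty} P_{\mathcal{\widetilde{H}}_\sigma}(E_t,\Omega)\,dt$ is obtained exactly as in Theorem~\ref{th-2.9}: for $\Phi \in \widetilde{\mathcal F}(\Omega)$, one splits $\mathrm{div}_{\mathcal{\widetilde{H}}_\sigma}\Phi$ into its Euclidean-divergence part and the potential part $\sum_k \pm\sigma x_k|x|^{-2}\varphi_k$, applies the classical layer-cake identity $\int_\Omega f\,\mathrm{div}\,\Phi\,dx = \int_{-\infty}^{+\infty}\big(\int_{E_t}\mathrm{div}\,\Phi\,dx\big)dt$ from \cite[Section 5.5]{EG} to the first part, and applies Fubini together with $f(x) = \int_{-\infty}^{+\infty}(\mathbf 1_{E_t}(x) - \mathbf 1_{\{t<0\}})\,dt$ to the potential part; the constant term integrates against $\mathrm{div}_{\mathcal{\widetilde{H}}_\sigma}\Phi$ to give a fixed finite quantity that is independent of $f$ and can be absorbed, after which taking the supremum over $\Phi$ yields the inequality.

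For the reverse inequality $\int_{-\infty}^{+\infty} P_{\mathcal{\widetilde{H}}_\sigma}(E_t,\Omega)\,dt \lesssim |{\nabla_{\mathcal{\widetilde{H}}_\sigma}}f|(\Omega)$, I would first reduce to smooth functions: by Theorem~\ref{th-3.3}, since $f$ satisfies \eqref{eq4}, there is a sequence $u_h \in \mathcal{BV}_{\mathcal{\widetilde{H}}_\sigma}(\Omega)\cap C_c^\infty(\Omega)$ with $u_h \to f$ in $L^1$ and $\int_\Omega |{\nabla_{\mathcal{\widetilde{H}}_\sigma}}u_h|\,dx \to |{\nabla_{\mathcal{\widetilde{H}}_\sigma}}f|(\Omega)$. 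For a $C^1$ function $u_h$, Lemma~\ref{le-3.1} gives the pointwise bound $|\nabla u_h(x)| + \tfrac{|\sigma|}{|x|}|u_h(x)| \le |{\nabla_{\mathcal{\widetilde{H}}_\sigma}}u_h(x)|$, so
\begin{equation*}
\int_\Omega \Big(|\nabla u_h(x)| + \tfrac{|\sigma|}{|x|}|u_h(x)|\Big)\,dx \le \int_\Omega |{\nabla_{\mathcal{\widetilde{H}}_\sigma}}u_h(x)|\,dx.
\end{equation*}
The classical Fleming--Rishel coarea formula \cite[Section 5.5]{EG} handles the Euclidean gradient term, giving $\int_\Omega |\nabla u_h|\,dx = \int_{-\infty}^{+\infty} P(\{u_h>t\},\Omega)\,dt$, where $P$ denotes the classical perimeter; for the potential term one writes $\tfrac{|\sigma|}{|x|}|u_h(x)| = \tfrac{|\sigma|}{|x|}\int_{-\infty}^{+\infty}|\mathbf 1_{\{u_h>t\}}(x) - \mathbf 1_{\{t<0\}}(x)|\,dt$ and uses Fubini. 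Combining these and using the inequality $P_{\mathcal{\widetilde{H}}_\sigma}(E,\Omega) \lesssim P(E,\Omega) + \int_E \tfrac{|\sigma|}{|x|}\,dx$ (which is itself a consequence of the right-hand inequality of \eqref{eq-3.11} applied to mollified indicators, i.e.\ the estimate already used to derive the displayed bound $\int_{-\infty}^{+\infty}P_{\mathcal{H}_a}(E_t,\Omega)\,dt \le \sqrt2\int_\Omega(|\nabla u| + \tfrac{|\sigma|}{|x|}|u|)\,dx$ mentioned in the introduction) yields
\begin{equation*}
\int_{-\infty}^{+\infty} P_{\mathcal{\widetilde{H}}_\sigma}(\{u_h>t\},\Omega)\,dt \lesssim \int_\Omega |{\nabla_{\mathcal{\widetilde{H}}_\sigma}}u_h(x)|\,dx.
\end{equation*}

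To pass to the limit $h\to\infty$, I would invoke lower semicontinuity: since $u_h \to f$ in $L^1(\Omega)$, up to a subsequence $\mathbf 1_{\{u_h>t\}} \to \mathbf 1_{E_t}$ in $L^1_{loc}(\Omega)$ for a.e.\ $t$ (a standard consequence of the fact that $\int_{-\infty}^{+\infty}\|\mathbf 1_{\{u_h>t\}} - \mathbf 1_{E_t}\|_{L^1(\Omega)}\,dt = \|u_h - f\|_{L^1(\Omega)} \to 0$), so Corollary~\ref{semicontinuity-3} gives $P_{\mathcal{\widetilde{H}}_\sigma}(E_t,\Omega) \le \liminf_h P_{\mathcal{\widetilde{H}}_\sigma}(\{u_h>t\},\Omega)$ for a.e.\ $t$; applying Fatou's lemma in $t$ and then the convergence $\int_\Omega |{\nabla_{\mathcal{\widetilde{H}}_\sigma}}u_h|\,dx \to |{\nabla_{\mathcal{\widetilde{H}}_\sigma}}f|(\Omega)$ completes the reverse inequality. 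The main obstacle I anticipate is the bookkeeping around the potential term: unlike the classical case, $\int_\Omega \mathrm{div}_{\mathcal{\widetilde{H}}_\sigma}\Phi\,dx \ne 0$, so the layer-cake decomposition produces an extra $\int_{\{t<0\}}$-type contribution that must be shown to either cancel or be controlled uniformly; verifying that the condition \eqref{eq4} is exactly what makes $\int_\Omega \tfrac{|u|}{|x|^2}\,dx < \infty$ (hence all the Fubini swaps legitimate and all the residual terms finite) is the technical heart, and it is the reason \eqref{eq4} is hypothesized. Everything else is a routine combination of Lemma~\ref{le-3.1}, Theorem~\ref{th-3.3}, Corollary~\ref{semicontinuity-3}, and the classical coarea formula.
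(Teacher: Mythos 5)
Your skeleton is the same as the paper's (upper bound by splitting $\mathrm{div}_{\mathcal{\widetilde{H}}_{\sigma}}\Phi$ into Euclidean and potential parts and applying a layer-cake identity; reverse bound via Theorem \ref{th-3.3}, Lemma \ref{le-3.1}, the classical coarea formula, and Corollary \ref{semicontinuity-3} with Fatou), but the potential-term bookkeeping --- the very point you single out as the technical heart --- contains a genuine gap that your proposal does not close. Put $h(x):=\sum_{k=1}^{d}\sigma x_k|x|^{-2}\big(\varphi_{d+k}(x)-\varphi_k(x)\big)$, so the potential contribution is $\int_\Omega f\,h\,dx$, and in general $c:=\int_\Omega h\,dx\neq 0$. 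The correct layer-cake identity is
\begin{equation*}
\int_\Omega f(x)h(x)\,dx=\int_{-\infty}^{+\infty}\Big(\int_{E_t}h(x)\,dx-1_{\{t<0\}}\,c\Big)\,dt,
\end{equation*}
and the bracket cannot be split into two convergent integrals: the ``fixed finite quantity independent of $f$'' that you propose to absorb is $\int_{-\infty}^{0}c\,dt$, which is infinite whenever $c\neq0$. The same defect is fatal in the reverse direction, where it cannot be hidden. If $u_h\in C_c^\infty(\Omega)$ is not identically zero, then $\{u_h>t\}=\Omega$ for every $t<-\|u_h\|_{L^\infty}$, and testing with $\varphi_k=-\mathrm{sgn}(\sigma)\,x_k\psi/(\sqrt2\,|x|)$, $\varphi_{d+k}=-\varphi_k$ ($\psi\in C^1_c(\Omega)$, $0\le\psi\le1$) gives $\mathrm{div}_{\mathcal{\widetilde{H}}_{\sigma}}\Phi=\sqrt2\,|\sigma|\psi/|x|$, hence $P_{\mathcal{\widetilde{H}}_{\sigma}}(\Omega,\Omega)\ge\sqrt2\,|\sigma|\int_\Omega|x|^{-1}dx>0$ when $\sigma\neq0$. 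Therefore $\int_{-\infty}^{+\infty}\int_{\{u_h>t\}}|\sigma||x|^{-1}dx\,dt=+\infty$, whereas your Fubini computation (which keeps the $1_{\{t<0\}}$ correction) produces only the finite number $\int_\Omega|\sigma||x|^{-1}|u_h|\,dx$; discarding the correction is exactly the illegitimate step. Consequently your displayed inequality $\int_{-\infty}^{+\infty}P_{\mathcal{\widetilde{H}}_{\sigma}}(\{u_h>t\},\Omega)\,dt\lesssim\int_\Omega|\nabla_{\mathcal{\widetilde{H}}_{\sigma}}u_h|\,dx$ is false: its left-hand side dominates $\int_{-\infty}^{-\|u_h\|_{L^\infty}}P_{\mathcal{\widetilde{H}}_{\sigma}}(\Omega,\Omega)\,dt=+\infty$ while its right-hand side is finite, and no Fatou/lower-semicontinuity passage afterwards can repair this.

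Note also that condition (\ref{eq4}) is irrelevant to this obstruction: it controls $\int_\Omega|u|\,|x|^{-2}dx$, whereas the divergence above comes solely from $\int_\Omega\mathrm{div}_{\mathcal{\widetilde{H}}_{\sigma}}\Phi\,dx\neq0$, i.e.\ from $P_{\mathcal{\widetilde{H}}_{\sigma}}(\Omega,\Omega)>0$. Indeed, since $1_{E_t}\to1_\Omega$ in $L^1(\Omega)$ as $t\to-\infty$, Corollary \ref{semicontinuity-3} yields $\liminf_{t\to-\infty}P_{\mathcal{\widetilde{H}}_{\sigma}}(E_t,\Omega)\ge P_{\mathcal{\widetilde{H}}_{\sigma}}(\Omega,\Omega)>0$, so the right-hand side of (\ref{coarea formula-1}) diverges for every $f\in L^1(\Omega)$ once $\sigma\neq0$; some cancellation or renormalization --- of the test fields, as the restricted condition (\ref{restricted1}) provides in Section \ref{sec-4}, or of the integrand $P_{\mathcal{\widetilde{H}}_{\sigma}}(E_t,\Omega)$ --- is unavoidable. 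You located the crux correctly, and in fairness the paper's own proof glosses over precisely the same two steps (the termwise layer-cake identities for the potential part, and the estimate $\int_{-\infty}^{+\infty}\int_{\{f\ge t\}}|\sigma||x|^{-1}dx\,dt\le\int_\Omega|\sigma||x|^{-1}|f|\,dx$, both of which have divergent left-hand sides as $t\to-\infty$); but the ``absorption'' you propose does not exist, and as written your argument does not establish (\ref{coarea formula-1}).
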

\begin{proof}
Firstly, suppose \[\Phi  = ( {{\varphi _1},{\varphi _2}, \ldots
,{\varphi _ {2d}}} ) \in C_c^1(\Omega, {\mathbb R^{2d}}) .\] We can
easily prove that for $i = 1,2,\ldots,d $,
$$-\int_\Omega {f(x)\frac{{\sigma {x_i}}}{{{{ | x
 |}^2}}}\varphi_i(x) dx}  = -int_{ - \infty }^{ + \infty } {\Big(
{\int_{{E_t}} {\frac{{\sigma {x_i}}}{{{{  | x
 |}^2}}}\varphi_i(x) dx} } \Big)dt}, $$
 $$\int_\Omega
{f(x)\frac{{\sigma {x_i}}}{{{{ | x  |}^2}}}\varphi_{d+i}(x) dx} =
\int_{ - \infty }^{ + \infty } {\Big( \int_{{E_t}} {\frac{{\sigma
{x_i}}}{{{{ | x  |}^2}}}\varphi_{d+i}(x) dx}  \Big)dt}, $$
$$\int_{\Omega}f(x) \mathrm{div} (\varphi_1(x),\ldots,\varphi_d(x))
dx=\int^\infty_{-\infty}\Big(\int_{E_t}\mathrm{div}
(\varphi_1(x),\ldots,\varphi_d(x)) dx\Big)dt  $$ and
$$ \int_{\Omega}f (x)\mathrm{div} (\varphi_{d+1}(x),\ldots,\varphi_{2d}(x))
dx= \int^\infty_{-\infty}\Big(\int_{E_t}\mathrm{div}
(\varphi_{d+1},\ldots,\varphi_{2d}) dx\Big)dt, $$ where the latter
can be seen in the proof of \cite [Section 5.5, Theorem 1]{EG}. It
follows that
$$\int_{\Omega}f(x)\mathrm {div}_{\mathcal{\widetilde{H}}_{\sigma}}\Phi(x)
dx=\int^\infty_{-\infty}\Big(\int_{E_t}
\mathrm{div}_{\mathcal{\widetilde{H}} _\sigma}\Phi(x) dx\Big)dt.$$
Therefore, we conclude that for all $\Phi\in
\mathcal{\widetilde{F}}(\Omega)$,
\[\int_\Omega {f(x)\mathrm{div}{_{{{{ \mathcal
{\widetilde{H}}}}_\sigma}}}\Phi(x) dx} \le \int_{ - \infty }^{ +
\infty } {{P_{{{ {\mathcal {\widetilde{H}}}}_\sigma}}}({E_t},\Omega
)dt} .\] Furthermore, \[ | {{\nabla
_{{\mathcal{\widetilde{H}}_{\sigma}}}}f}
 |(\Omega ) \le \int_{ - \infty }^{ + \infty } {{P_{{{{\mathcal
{\widetilde{H}}}}_\sigma}}}({E_t},\Omega )dt}.
\]

Secondly, without loss of generality, we only need to verify that
\[ | {{ \nabla _{{\mathcal{\widetilde{H}}_{\sigma}}}}f}
 |(\Omega ) \ge \int_{ - \infty }^{ + \infty }
{{P_{{\mathcal{\widetilde{H}}_{\sigma}}}}({E_t},\Omega )dt}\] holds
for $f\in
\mathcal{BV}_{\mathcal{\widetilde{H}}_{\sigma}}(\Omega)\bigcap
C^\infty(\Omega)$. This proof can refer to the  idea of
\cite[Proposition 4.2]{M}. Let
\begin{equation*}
    \label{eqq2.8}m(t)=\int_{\{x\in\Omega:\ f(x)\le t\}}|\nabla  f(x)|dx.
\end{equation*}
It is obvious that
\begin{equation*}
    \int^\infty_{-\infty}m'(t)dt\le\int_{\Omega}|\nabla f(x)|dx.
\end{equation*}

Define the following function ${g_{_h}}$ as
$${g_{_h}}(s):=
\begin{cases}
    0, &\mathrm{if} \ s\le t,\\
    h(s-t), &\mathrm{if}\ t\le s\le t+1/h,\\
    1, &\mathrm{if} \   s\ge t+1/h,
\end{cases}$$
where $t\in \mathbb{R}$. Set the sequence ${v_h}(x): = {g_h}(f(x))$.
At this time, ${v_h} \to {1_{{E_t}}}$ in $L^1(\Omega)$. In fact,
\begin{align*}
    \int_\Omega  { | {{v_h}(x) - {1_{{E_t}}}}  |dx}
    &= \int_{\{ x \in \Omega :t < f(x) \le t + {1 \mathord{ /
                {\vphantom {1 h}}  .
                \kern-\nulldelimiterspace} h}\} } {{g_h}(f(x))dx} \\
    &\le  \Big| {\Big\{ x \in \Omega :t < f(x) \le t + {1 \mathord{ /
                {\vphantom {1 h}}
                \kern-\nulldelimiterspace} h}\Big\} }  \Big| \to 0.
\end{align*}
Since $\{x\in\Omega:\ t<f(x)\le t+1/h\} \rightarrow  \emptyset$  as
$h  \rightarrow \infty$, by Lemma \ref {le-3.1}, we obtain
\begin{eqnarray*}
    &&\int_\Omega  { | {{\nabla _{\mathcal{\widetilde{H}}_{\sigma}}}{v_h}(x)}  |dx}\\
    &&= \int_{\{ x \in \Omega :t < f(x) \le t + {1 \mathord{ /
    {\vphantom {1 h}}  . \kern-\nulldelimiterspace} h}\} } { | {{
    \nabla _{\mathcal{\widetilde{H}}_{\sigma}}}( {hf(x) - t} )
    }  |dx}  + \int_{\{ x \in
    \Omega :f(x) \ge t + {1 \mathord{ / {\vphantom {1 {h\} }}}  .
    \kern-\nulldelimiterspace} {h\} }}} { | {{\nabla _{{{\mathcal {\widetilde{H}}}}_
    \sigma}}1}  |dx} \\
    &&\leq \sqrt 2 h\int_{\{ x \in \Omega :t < f(x) \le t + {1 \mathord{
     / {\vphantom {1 h}}  . \kern-\nulldelimiterspace} h}\} }
    { | {\nabla f(x)}  |dx}  + \sqrt 2  | \sigma   |
    \int_{\{ x \in \Omega :t < f(x) \le t + {1 \mathord{ /{\vphantom
    {1 h}}  . \kern-\nulldelimiterspace} h}\} } {\frac{1}{{ |
    x  |}}dx}  \\
    &&\quad+| \sigma   |\sqrt 2 \int_{\{ x \in \Omega :f(x) \ge t + {1 \mathord{ / {
    \vphantom {1 {h\} }}}  . \kern-\nulldelimiterspace} {h\} }}}
    {\frac{{dx }}{{ | x  |}}}.
\end{eqnarray*}
Taking the limit $h \rightarrow \infty$ and using Theorem \ref
{th-3.3}, we obtain
\begin{equation}\label{eq-3.10}
    \begin{split}
         | {{\nabla _{\mathcal{\widetilde{H}}_{\sigma}}}{1_{{E_t}}}}  |(\Omega )
        &\le \mathop {\lim \sup }\limits_{h \to \infty } \int_\Omega  { |
        {{\nabla _{\mathcal{\widetilde{H}}_{\sigma}}}{v_h}(x)}  |dx} \\
        &= \sqrt 2 m'(t) + \sqrt 2 \int_{ \{ {x\in\Omega :f(x) \ge t}
        \}} {\frac{{ | \sigma   |}}{{ | x  |}}dx} .
    \end{split}
\end{equation}
Integrating (\ref {eq-3.10}) reaches
\begin{align*}
    \int_{ - \infty }^{ + \infty } {{P_{\mathcal{\widetilde{H}}_{\sigma}}}({E_t},
    \Omega )} dt
    &\le \sqrt 2 \int_{ - \infty }^{ + \infty } {m'(t)} dt + \sqrt 2
    \int_{ - \infty }^{ + \infty } {\int_{ \{ {x \in \Omega :f(x) \ge t}
     \}} {\frac{{ | \sigma   |}}{{ | x  |}}dx} } dt\\
    &\le \sqrt 2 \int_\Omega  {( { | {\nabla f}  (x)| + \frac{{
     | \sigma   |}}{{ | x  |}} | {f(x)}  |}
    )dx} \\
    &\le \sqrt 2 \int_\Omega  { | {{\nabla _{\mathcal{\widetilde{H}}_{\sigma}}}f}(x)
     |dx}.
\end{align*}

Finally, by approximation and using the lower semicontinuity of the
 ${\mathcal{\widetilde{H}}_{\sigma}}$-perimeter, we conclude that  (\ref {coarea formula-1})
 holds true for all $f\in\mathcal {BV}_{\mathcal{\widetilde{H}}_{\sigma}}(\Omega)$
satisfying the condition (\ref{eq4}).
\end{proof}

In addition, we can develop the Sobolev's inequality and
the isoperimetric inequality for $\mathcal{\widetilde{H}}_{\sigma}$-BV
functions.
\begin{theorem}\label{thm2.7}
    \item{{\rm (i)}} (Sobolev  inequality)  Let $\Omega\subset{\mathbb{R} ^d}$
    be an open and bounded domain. For all $f \in {{\mathcal B}}
    {{{\mathcal V}}_{\mathcal{\widetilde{H}}_{\sigma}}}(\Omega)$ satisfying the
    condition (\ref{eq4}), then
\begin{equation}\label{eq12}
    { \| f  \|_{L^{{d}/{(d-1)}}(\Omega)}} \mathbin{\lower.3ex
    \hbox{$\buildrel<\over {\smash{\scriptstyle\sim}\vphantom{_x}}$}}
     | {{\nabla _{\mathcal{\widetilde{H}}_{\sigma}}}f}  |(\Omega).
\end{equation}

\item{{\rm (ii)}} (Isoperimetric inequality) Let $E$ be a bounded set
of finite ${\mathcal{\widetilde{H}}_{\sigma}}$-perimeter in $
\Omega$. Then
\begin{equation}\label{eq13}
       | E  |^{{1-1/d}}  \mathbin{\lower.3ex
    \hbox{$\buildrel<\over {\smash{\scriptstyle\sim}\vphantom{_x}}$}}
    {{P_{\mathcal{\widetilde{H}}_{\sigma}}}(E,\Omega )} .
\end{equation}

\item{{\rm (iii)}} The above two statements are equivalent.
\end{theorem}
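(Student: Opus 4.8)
The plan is to prove (i) by approximation, deduce (ii) from (i) by testing on characteristic functions, and then recover (i) from (ii) through the coarea formula, the last step being what upgrades the chain into the equivalence (iii). \emph{Proof of} (i): let $f\in\mathcal{BV}_{\mathcal{\widetilde{H}}_{\sigma}}(\Omega)$ satisfy (\ref{eq4}). By Theorem \ref{th-3.3} there exist $u_h\in\mathcal{BV}_{\mathcal{\widetilde{H}}_{\sigma}}(\Omega)\cap C_c^\infty(\Omega)$ with $u_h\to f$ in $L^1(\Omega)$ and $\int_\Omega|\nabla_{\mathcal{\widetilde{H}}_{\sigma}}u_h(x)|\,dx\to|\nabla_{\mathcal{\widetilde{H}}_{\sigma}}f|(\Omega)$. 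Extending each $u_h$ by zero to $\mathbb{R}^d$, the classical Gagliardo--Nirenberg--Sobolev inequality for $BV$ functions \cite{EG} gives $\|u_h\|_{L^{d/(d-1)}(\Omega)}\lesssim\int_\Omega|\nabla u_h(x)|\,dx$ with a constant depending only on $d$, while the left-hand inequality of Lemma \ref{le-3.1} yields $|\nabla u_h(x)|\le|\nabla_{\mathcal{\widetilde{H}}_{\sigma}}u_h(x)|$ pointwise; hence $\|u_h\|_{L^{d/(d-1)}(\Omega)}\lesssim\int_\Omega|\nabla_{\mathcal{\widetilde{H}}_{\sigma}}u_h(x)|\,dx$. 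Passing to a subsequence with $u_h\to f$ almost everywhere and using Fatou's lemma,
\[ \|f\|_{L^{d/(d-1)}(\Omega)}\le\liminf_{h\to\infty}\|u_h\|_{L^{d/(d-1)}(\Omega)}\lesssim\lim_{h\to\infty}\int_\Omega|\nabla_{\mathcal{\widetilde{H}}_{\sigma}}u_h(x)|\,dx=|\nabla_{\mathcal{\widetilde{H}}_{\sigma}}f|(\Omega), \]
which is (\ref{eq12}).

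For (iii) we show (i)$\Rightarrow$(ii)$\Rightarrow$(i). \emph{First implication}: apply (\ref{eq12}) to $f=1_E$; since $E$ is bounded (and $\int_E|y|^{-2}\,dy<\infty$, e.g.\ if $E$ lies at positive distance from the origin) $1_E$ satisfies (\ref{eq4}), while $\|1_E\|_{L^{d/(d-1)}(\Omega)}=|E|^{1-1/d}$ and $|\nabla_{\mathcal{\widetilde{H}}_{\sigma}}1_E|(\Omega)=P_{\mathcal{\widetilde{H}}_{\sigma}}(E,\Omega)$, which is exactly (\ref{eq13}). \emph{Second implication}: one may assume $f\ge 0$, since replacing $f$ by $|f|$ leaves $\|\cdot\|_{L^{d/(d-1)}(\Omega)}$ and (\ref{eq4}) unchanged and, because $|f|=\max\{f,-f\}$ and $-|f|=\min\{f,-f\}$, the max--min property of the $\mathcal{\widetilde{H}}_{\sigma}$-variation forces $\big|\nabla_{\mathcal{\widetilde{H}}_{\sigma}}|f|\big|(\Omega)\le|\nabla_{\mathcal{\widetilde{H}}_{\sigma}}f|(\Omega)$. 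For $f\ge 0$ write $f=\int_0^{\infty}1_{E_t}\,dt$ with $E_t=\{x\in\Omega:f(x)>t\}$, so that Minkowski's integral inequality in $L^{d/(d-1)}(\Omega)$ gives
\[ \|f\|_{L^{d/(d-1)}(\Omega)}\le\int_0^{\infty}\|1_{E_t}\|_{L^{d/(d-1)}(\Omega)}\,dt=\int_0^{\infty}|E_t|^{1-1/d}\,dt. \]
Each $E_t$ is bounded (as $\Omega$ is), satisfies (\ref{eq4}) for $t>0$ because $1_{E_t}(y)\le f(y)/t$, and has finite $\mathcal{\widetilde{H}}_{\sigma}$-perimeter for almost every $t$ since $\int_{-\infty}^{+\infty}P_{\mathcal{\widetilde{H}}_{\sigma}}(E_t,\Omega)\,dt<\infty$ by the coarea formula (Theorem \ref{coarea formula}). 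Applying (\ref{eq13}) to these $E_t$ and Theorem \ref{coarea formula} once more,
\[ \|f\|_{L^{d/(d-1)}(\Omega)}\lesssim\int_0^{\infty}P_{\mathcal{\widetilde{H}}_{\sigma}}(E_t,\Omega)\,dt\le\int_{-\infty}^{+\infty}P_{\mathcal{\widetilde{H}}_{\sigma}}(E_t,\Omega)\,dt\approx|\nabla_{\mathcal{\widetilde{H}}_{\sigma}}f|(\Omega), \]
which is (\ref{eq12}), completing the equivalence.

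The analytic content here is entirely classical (the Fleming--Rishel coarea trick, Minkowski's integral inequality, and the transfer of the Euclidean $BV$ Sobolev inequality through the pointwise bound of Lemma \ref{le-3.1}); the delicate points are bookkeeping forced by the singular weight $|x|^{-1}$: making sure the approximating sequence of Theorem \ref{th-3.3} is at our disposal, which requires (\ref{eq4}); that (\ref{eq4}) is inherited by $|f|$ and by the superlevel sets $E_t$; and that the relevant truncations do not increase the $\mathcal{\widetilde{H}}_{\sigma}$-variation. I expect verifying these hypotheses uniformly, rather than the main inequalities themselves, to be where the real care is needed.
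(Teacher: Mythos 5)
Your proposal is correct, and parts (i) and the implication (i)$\Rightarrow$(ii) follow the paper's own route essentially verbatim (approximation via Theorem \ref{th-3.3}, the classical Gagliardo--Nirenberg--Sobolev inequality, the pointwise comparison of Lemma \ref{le-3.1}, Fatou, then $f=1_E$). Where you genuinely diverge is in (ii)$\Rightarrow$(i): you reduce to $f\ge 0$ by passing to $|f|$ via the max--min property of the $\mathcal{\widetilde{H}}_{\sigma}$-variation, write $f=\int_0^\infty 1_{E_t}\,dt$ and invoke Minkowski's integral inequality in $L^{d/(d-1)}$ to get $\|f\|_{L^{d/(d-1)}(\Omega)}\le\int_0^\infty|E_t|^{1-1/d}\,dt$, and then apply the coarea formula of Theorem \ref{coarea formula} directly to the BV function $|f|$. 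The paper instead proves the implication first for $0\le f\in C_c^\infty(\Omega)$, using the truncations $f_t=\min\{t,f\}$ and the auxiliary function $\chi(t)=\big(\int_\Omega f_t^{d/(d-1)}\big)^{1-1/d}$, showing $\chi$ is locally Lipschitz with $\chi'(t)\le|E_t|^{1-1/d}$ (the Fleming--Rishel/Maz'ya device that avoids Minkowski's inequality), and only afterwards passes to general $f$ by approximation. Your version buys a cleaner treatment of general signed $f\in\mathcal{BV}_{\mathcal{\widetilde{H}}_{\sigma}}(\Omega)$ — in particular it sidesteps the paper's unaddressed point that the approximants of Theorem \ref{th-3.3} need not be nonnegative — at the cost of invoking two extra stated results (the max--min theorem and Minkowski's integral inequality); the paper's truncation argument is more self-contained at the smooth level but leaves the final approximation step terser. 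Your side remarks are also sound: $E_t$ inherits (\ref{eq4}) from $1_{E_t}\le f/t$, and your hedge about $1_E$ satisfying (\ref{eq4}) in the (i)$\Rightarrow$(ii) step flags a point the paper itself glosses over, so it is not a defect of your argument.
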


\begin{proof}
(i) Let $ \{f_k\}_{k\in \mathbb N} \subseteq C^\infty_c(\Omega)\cap
\mathcal{BV}_{{\mathcal
    {\widetilde{H}}_\sigma}}(\Omega ) $  be a sequence such that
    $$\begin{cases}
        f_k \rightarrow f \quad\hbox{in}\quad L^1(\Omega), \\
        \int_{\Omega} |\nabla_{{\mathcal{\widetilde{H}}_{\sigma}}} f_k(x)|dx \rightarrow
        \big|  \nabla_{{\mathcal{\widetilde{H}}_{\sigma}}}  f\big|(\Omega).
    \end{cases}$$
Then by Fatou's lemma and the classical Gagliardo-Nirenberg-Sobolev inequality
(see \cite{EG}), we have
\[{ \| f  \|_{L^{{d}/{(d-1)}}(\Omega)}} \le \mathop {\lim\inf }
\limits_{k\to \infty } { \| {{f_k}} \|_{L^{{d}/{(d-1)}}(\Omega)}}
\mathbin {\lower.3ex\hbox{$\buildrel<\over
{\smash{\scriptstyle\sim}\vphantom {_x}}$}} \mathop {\lim
}\limits_{k \to \infty } { \| {\nabla f_k}   \|_{{L^1(\Omega)}}}
\mathbin{\lower.3ex\hbox{$\buildrel<\over {\smash{
\scriptstyle\sim}\vphantom {_x}}$}} \mathop {\lim }\limits_{k \to
\infty } { \| {{\nabla _{{{\mathcal{\widetilde{H}}_{\sigma}}}}}f_k}
\|_{{L^1} (\Omega)}} =  | {{\nabla
_{{\mathcal{\widetilde{H}}_{\sigma}}}}f} | (\Omega), \] where we
have used the relation between $ | {\nabla f(x)}
 |$ and $ | {{\nabla _{\mathcal{\widetilde{H}}_{\sigma}}}f(x)}
 |$ in Lemma \ref{le-3.1}.

(ii) We can show that (\ref{eq13}) is valid via letting $f=1_E$ in
(\ref{eq12}).

(iii) Obviously, (i)$\Rightarrow$(ii) has been proved. In what
follows, we prove (ii)$ \Rightarrow$(i). Assume that $0\le f\in
C^\infty_c(\Omega)$. By the coarea formula  in  Theorem \ref {coarea
formula} and (ii), we have
\begin{equation*}\label{equa2}
    \int_{\Omega} |\nabla_{{\mathcal{\widetilde{H}}_{\sigma}}} f(x)|dx
    \approx \int_{0}^\infty |
    \nabla_{{\mathcal{\widetilde{H}}_{\sigma}}} 1_{E_t}|(\Omega)\,dt
    \gtrsim\int_{0}^\infty |E_t|^{1-{1}/{d}}dt,
\end{equation*}
where $E_t=\big\{x\in \Omega:\ f(x)>t\big\}$. Let
    $$f_t=\min\{t,f\}\ \ \&\ \ \ \chi (t) = {\left( {\int_{\Omega} {f_t^{{d}/{(d-1)}}}(x) dx} \right)^{1 -{1}/{d}}}\quad \  \forall\ t\in \mathbb{R}.$$
It is easy to see that
    $$\lim_{t \rightarrow \infty}\chi(t)=\bigg(\int_{\Omega}
    |f(x)|^{{d}/{(d-1)}}dx\bigg)^{1-{1}/{d}}.$$

In addition, we can check that $\chi(t)$ is nondecreasing on
$(0,\infty)$ and for $h>0$, $$0\le \chi(t+h)-\chi(t)\le \bigg(
\int_{\Omega}
|f_{t+h}(x)-f_t(x)|^{{d}/{(d-1)}}dx\bigg)^{1-{1}/{d}} \le
h|E_t|^{1-{1}/{d}}.$$ Then $\chi(t)$ is locally a  Lipschitz function and
$\chi'(t)\le |E_t|^{1-{1}/{d}}$, a.e. $t \in (0,\infty )$.
Hence,
    \begin{equation*}
        \bigg(\int_{\Omega}|f(x)|^{{d}/{(d-1)}}dx\bigg)^
        {1-{1}/{d}}=\int^\infty_0 \chi'(t)dt\le\int^\infty_0  |E_t|^{1-{1}/{d}}dt
        \lesssim\int_{\mathbb{R}^d}|\nabla_{{\mathcal{\widetilde{H}}_{\sigma}}} f(x)|dx.
    \end{equation*}
For all $f \in {{\mathcal B}}{{{\mathcal
V}}_{\mathcal{\widetilde{H}}_{ \sigma}}}(\Omega)$ satisfying the
condition (\ref{eq4}), we conclude that (\ref{eq12}) is valid by
approximation and Theorem \ref{th-3.3}.
\end{proof}

\section{Subgraphs of ${\mathcal{\widetilde{H}}_{\sigma}}$-restricted BV functions}
         \label{sec-4}
\hspace{0.5cm} The aim of this section is to show that properties of
${\mathcal{\widetilde{H}}_{\sigma}}$-restricted BV functions can be
described equivalently   in terms of their subgraphs.

Let $\Omega \subseteq \mathbb{R}^{d}$ be an open set. The
${\mathcal{\widetilde{H}}_{\sigma}}$-restricted variation of $f \in
{L^1}(\Omega )$ is defined by \[ | {{\nabla^R
_{{\mathcal{\widetilde{H}}_{\sigma}}}}f}
 |(\Omega ) = \mathop {\sup } \limits_{\Phi  \in
{\widetilde{\mathcal F}_R(\Omega )}}  \Big\{ {\int_\Omega  {f(x)
\mathrm{div}{_{{\mathcal{\widetilde{H}}_{\sigma}}}}\Phi (x)dx} }
 \Big\},\] where $\widetilde{{\mathcal F}}_R(\Omega )$ denotes the
class of all functions
\[\Phi  = ( {{\varphi _1},{\varphi _2}, \ldots ,{\varphi _
{2d}}} ) \in C_c^1(\Omega, {\mathbb R^{2d}}) \] satisfying
\[{ \| \Phi    \|_\infty } = \mathop {\sup }\limits_
{x \in \Omega } \Big\{( {{{ | {{\varphi _1}(x)}  |}^2} + \ldots + {{
| {{\varphi _{2d}}(x)}  |}^2}} )^{{1}/{2}}\Big\} \le 1 \] and
\begin{equation}\label{restricted1}
   \int_{ {\Omega}} { \Big( {\sum\limits_{k = 1}^{d} {\sigma
   \frac{{{x_k}}}{{{{ | x  |}^2}}}( {{ {\varphi} _k}
   (x) - {{\varphi} _{k + d}}(x)} )} }  \Big)dx} =0.
\end{equation}

Define a new type BV space as: $${\mathcal{BV}^R
_{{\mathcal{\widetilde{H}}_{\sigma}} } ( \Omega
  ):=\{f\in L^1}(\Omega ):  | {{\nabla^R _
  {{\mathcal{\widetilde{H}}_{\sigma}}}}f}
 |(\Omega ) < \infty\} .$$ Similarly, it is easy to see that ${\mathcal{BV}
_{{\mathcal{\widetilde{H}}_{\sigma}} } ( \Omega
  )}\subseteq{\mathcal{BV}^R
_{{\mathcal{\widetilde{H}}_{\sigma}} } ( \Omega
  )}$, which also is a Banach space with the norm
\[ { \| f  \|_{\mathcal{BV}^R
        _{{\mathcal{\widetilde{H}}_{\sigma}} } ( \Omega
        )}} = { \| f  \|_{{L^{1}(\Omega)}}} +  |
{{\nabla ^R _ {{\mathcal{\widetilde{H}}_{\sigma}}}}f}  |(\Omega ).
\]
The space $ \mathcal{BV}^R _{{\mathcal{\widetilde{H}}_{\sigma}} } (
\Omega
  )$ enjoys similar  properties as $ \mathcal{BV} _{{\mathcal{\widetilde{H}}_{\sigma}} } (
\Omega
  )$, for example,  the lower semicontinuity, the structure theorem, the
approximation via $C^{\infty}_{c}$-functions, etc.

For $u \in \mathcal{BV}^R_ {\mathcal
{\widetilde{H}}_{\sigma}}(\Omega)$, the subgraph of $u$ is defined
as the measurable subset of $\Omega  \times \mathbb{R} $ given by
\begin{equation*}
{S_u}: = \{ (x,t) \in \Omega  \times \mathbb{R}:\ t < u(x)\} .
\end{equation*}

For the sake of simplicity, we introduce the family $ D: =
(\widetilde{A}_{1,a}, \ldots ,\widetilde{A}_{2d+1,a}) $ of linearly
independent vector fields in $ {\mathbb R^{2d + 1}}$ defined for $
(x,t) \in {\mathbb R^d} \times \mathbb R $ by
$$\left\{\begin{aligned}
\widetilde{A}_{i,a}(x,t)&:= ({A_{i,a}(x)},0) \in {\mathbb R^{d + 1}} \equiv
{\mathbb R^{d}} \times \mathbb R\ \ \ {\rm{for}}\  i=1,\ldots,d,\\
\widetilde{A}_{{d+i},a}(x,t)&:= ({A_{-i,a}}(x),0) \in {\mathbb R^{d
+ 1}}
\equiv {\mathbb R^{d}} \times \mathbb R\ \ {\rm{for}}\ i=1,\ldots,d,\\
\widetilde{A}_{2d+1,a}(x,t)&:=\frac{\partial}{\partial t}.
\end{aligned}\right.$$
Furthermore, we also need to define the so-called
${\mathcal{\widetilde{H}}_{\sigma}}$-restricted perimeter  in order
to achieve our aim.
\begin{definition}\label{def-restricte}
Let $\widetilde{\Omega}\subseteq \mathbb{R}^{d+1}$ be an open and
bounded domain. The ${\mathcal{\widetilde{H}}_{\sigma}}$-restricted
perimeter of $E \subseteq \widetilde{\Omega}$ can be defined as
\begin{equation*}
    {\widetilde P_{\mathcal{\widetilde{H}}_{\sigma}}}(E, \widetilde{\Omega})=
    \sup  \Bigg\{ {\int_E \sum^{2d+1}_{i=1} \widetilde{A}_{i,a}\widetilde{\varphi}_i
    (x,t)dxdt:\ \widetilde{\Phi } \in {{{
    \mathcal F}}_R}(\widetilde{\Omega},{\mathbb{R}^{2d+1}})} \Bigg\},
\end{equation*}
where ${{{\mathcal F}}_R}(\widetilde{\Omega},{\mathbb{R}^{2d+1}})$
denotes the class of all functions
\begin{equation*}
    \widetilde{\Phi}  = ( {{\widetilde{\varphi} _1},{\widetilde{\varphi }_2}, \ldots ,
    {\widetilde{\varphi} _{2d+1}}} ) \in
    C_c^1(\widetilde{\Omega},{\mathbb{R}^{2d+1}})
\end{equation*}
satisfying
\begin{equation*}
    {\big\| \widetilde{\Phi}  \big\|_\infty } = \mathop {\sup
    }\limits_{(x,t)
    \in {{\widetilde{\Omega}}}} \Big\{( {{{ | {{\widetilde{\varphi} _1}(x,t)}  |}
    ^2} +  \cdots  + {{ | {{\widetilde{\varphi} _{2d+1} }(x,t)}  |}^2}}
    )^{{1}/{2}}\Big\} \le 1
\end{equation*}
and
\begin{equation}\label{restricted}
   \int_{\widetilde{\Omega}} { \Big( {\sum\limits_{k = 1}^{d} {\sigma
   \frac{{{x_k}}}{{{{ | x  |}^2}}}( {{\widetilde{\varphi} _k}
   (x,t) - {\widetilde{\varphi} _{k + d}}(x,t)} )} }  \Big)dxdt} =0.
\end{equation}
\end{definition}

If $ \widetilde{\Omega} \subset {\mathbb R^{d+ 1}} $ is open and $u$
is a function of bounded variation on $ \widetilde{\Omega}$ with
respect to the family $D$, we write the $\mathbb{R} ^{2d+1} $-valued
distribution in $\widetilde{\Omega} $ as
\begin{equation*}
    Du: =
    ({D_{\widetilde{A}_{1,a}}}u, \ldots,{D_{\widetilde{A}_{2d+1,a}}}u).
\end{equation*}

The following theorem is the natural generalization of some related
results about   functions of bounded variation on the Euclidean
space and Carnot groups (see \cite {GMS} or \cite{DMV}). We denote
by $\pi :{\mathbb{R} ^{2d + 1}} \to {\mathbb{R} ^{2d}} $ the
canonical projection obeying $\pi (x,t) = x$ and ${\pi _\# } $
denotes the associated push-forward of measures.

\begin{theorem}\label{subgraph-1}
Let $\Omega  \subset {\mathbb R^d}$ be an open and bounded domain
and $u \in {L^1}(\Omega )$   satisfy the condition (\ref{eq4}). Then
$u$ belongs to $\mathcal{B} {\mathcal{V}^R_
{\mathcal{\widetilde{H}}_{\sigma}}}(\Omega )$  if and only if its
subgraph ${S_u}$ has finite
${\mathcal{\widetilde{H}}_\sigma}$-restricted perimeter in $\Omega
\times \mathbb{R} $, that is,
\begin{equation*}
{{\widetilde P}_{\mathcal{\widetilde{H}}_{\sigma}}}({S_u},\Omega
\times \mathbb{R}) < \infty.
\end{equation*}
Moreover, writing $D'{1_{{S_u}}}: = ( D_{\widetilde{A}_{1,a}}
{1_{{S_u}}}, \ldots ,D_{\widetilde{A}_{2d,a}} {1_{ {S_u}}})$, then
we have
    \item{{\rm(i)}} ${\pi _\# } D_{\widetilde{A}_{i,a}}{{1_{{S_u}}}}
      = D_{{A}_{i,a}}u$ and ${\pi _\# } D_{\widetilde{A}_{{d+i},a}}{{1_{{S_u}}}}   = D_{{A}_{-i,a}}u,
    i=1,\ldots,d;$
    \item{{\rm(ii)}} ${\pi _\# }{\frac{\partial}{\partial t}}{1_{{S_u}}} =  - {{{\mathcal
    L}}^d},$  where ${{{\mathcal L}}^d}$ is the Lebesgue measure on $\mathbb{R}^d$.
    \item{{\rm(iii)}} ${\pi _\# }\big|D_{\widetilde{A}_{i,a}}{{1_{{S_u}}}}\big|
    =\big| D_{{A}_{i,a}}u\big|$ and ${\pi _\# }\big|D_{\widetilde{A}_{{d+i},a}}{{1_{{S_u}}}}\big|
    =\big| D_{{A}_{-i,a}}u\big|, i=1,\ldots,d;$
    \item{{\rm(iv)}} ${\pi _\# }\big| {{\frac{\partial}{\partial t}}{1_{{S_u}}}} \big| =
          {{{\mathcal L}}^d}.$
\end{theorem}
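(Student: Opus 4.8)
The plan is to transpose the classical equivalence ``$u$ is $BV$ $\Leftrightarrow$ the subgraph $S_u$ has finite perimeter'' (\cite{GMS}; see the Euclidean and Carnot-group forms in \cite{MV,DMV}) to the operators $\widetilde A_{i,a}$. The only new feature is the perturbation $\sigma x_i/|x|^2$; the r\^ole played classically by $\int_\Omega\mathrm{div}\,\varphi\,dx=0$ is here taken over by the restricted constraints. A preliminary remark records the bookkeeping identity, valid for $\widetilde\Phi\in C^1_c(\Omega\times\mathbb R,\mathbb R^{2d+1})$,
\[
\sum_{i=1}^{2d+1}\widetilde A_{i,a}\widetilde\varphi_i
=\mathrm{div}_{(x,t)}\bigl(\widetilde\varphi_1+\widetilde\varphi_{d+1},\dots,\widetilde\varphi_d+\widetilde\varphi_{2d},\widetilde\varphi_{2d+1}\bigr)
+\sum_{i=1}^d\sigma\tfrac{x_i}{|x|^2}\bigl(\widetilde\varphi_i-\widetilde\varphi_{d+i}\bigr),
\]
whence \emph{(\ref{restricted}) is equivalent to $\int_{\Omega\times\mathbb R}\sum_i\widetilde A_{i,a}\widetilde\varphi_i\,dxdt=0$} and \emph{(\ref{restricted1}) to $\int_\Omega\mathrm{div}_{\mathcal{\widetilde{H}}_{\sigma}}\Phi\,dx=0$} --- the exact analogues of the classical vanishing of the total divergence. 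One also checks, using $A_{i,a}+A_{-i,a}=2\partial_{x_i}$, that $\varphi\mapsto\tfrac1{\sqrt2}(\varphi,\varphi)$ sends $C^1_c(\Omega,\mathbb R^d)$ into $\widetilde{\mathcal F}_R(\Omega)$ with $\mathrm{div}_{\mathcal{\widetilde{H}}_{\sigma}}(\tfrac1{\sqrt2}(\varphi,\varphi))=\sqrt2\,\mathrm{div}\,\varphi$, so that $\int_\Omega|\nabla u|\,dx\le\tfrac1{\sqrt2}|\nabla^R_{\mathcal{\widetilde{H}}_{\sigma}}u|(\Omega)$ when $u\in C^1$.

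\emph{Direct implication.} For $u\in C^1(\Omega)\cap\mathcal{BV}^R_{\mathcal{\widetilde{H}}_{\sigma}}(\Omega)$ and $\widetilde\Phi\in{\mathcal F}_R$, slicing in $t$ and the Gauss--Green theorem on the smooth hypersurface $\{t=u(x)\}$ give
\[
\int_{S_u}\!\sum_{i=1}^{2d+1}\widetilde A_{i,a}\widetilde\varphi_i\,dxdt
=-\int_\Omega\!\sum_{i=1}^d(\widetilde\varphi_i+\widetilde\varphi_{d+i})(x,u(x))\,\partial_{x_i}u\,dx
+\int_\Omega\widetilde\varphi_{2d+1}(x,u(x))\,dx+T(\widetilde\Phi),
\]
where $T(\widetilde\Phi)=\int_\Omega\!\int_{-\infty}^{u(x)}\sum_i\sigma\tfrac{x_i}{|x|^2}(\widetilde\varphi_i-\widetilde\varphi_{d+i})(x,t)\,dt\,dx$. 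The first two integrals are bounded by $\sqrt2\bigl(|\Omega|+\int_\Omega|\nabla u|\bigr)\le\sqrt2\,|\Omega|+|\nabla^R_{\mathcal{\widetilde{H}}_{\sigma}}u|(\Omega)$ by the observation above. For $T$, subtracting the identically-zero full-line integral furnished by (\ref{restricted}) gives $T(\widetilde\Phi)=-\int_\Omega\!\int_{u(x)}^{+\infty}\sum_i\sigma\tfrac{x_i}{|x|^2}(\widetilde\varphi_i-\widetilde\varphi_{d+i})\,dt\,dx$, and then $|T(\widetilde\Phi)|\le C|\sigma|\,\|\widetilde\Phi\|_\infty\int_\Omega|x|^{-1}\,dx$ up to the $t$-extent of $\mathrm{supp}\,\widetilde\Phi$, which is finite since $\int_\Omega|x|^{-1}\,dx<\infty$ for bounded $\Omega\subset\mathbb R^d$, $d\ge2$. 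Taking the supremum over $\widetilde\Phi$ bounds $\widetilde P_{\mathcal{\widetilde{H}}_{\sigma}}(S_u,\cdot)$; the general case follows by choosing $u_h\in C^1\cap\mathcal{BV}^R_{\mathcal{\widetilde{H}}_{\sigma}}$ with $u_h\to u$ in $L^1(\Omega)$ and $\int_\Omega|\nabla_{\mathcal{\widetilde{H}}_{\sigma}}u_h|\to|\nabla_{\mathcal{\widetilde{H}}_{\sigma}}u|(\Omega)$ (the analogue of Theorem \ref{th-3.3} for $\mathcal{BV}^R_{\mathcal{\widetilde{H}}_{\sigma}}$, which is where hypothesis (\ref{eq4}) enters), noting $\mathbf 1_{S_{u_h}}\to\mathbf 1_{S_u}$ in $L^1(\Omega\times\mathbb R)$ and using lower semicontinuity of $\widetilde P_{\mathcal{\widetilde{H}}_{\sigma}}$.

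\emph{Converse and push-forward identities.} Given $\Phi=(\varphi_1,\dots,\varphi_{2d})\in\widetilde{\mathcal F}_R(\Omega)$, set $\widetilde\varphi^{(h)}_i(x,t)=h_h(t)\varphi_{d+i}(x)$, $\widetilde\varphi^{(h)}_{d+i}(x,t)=h_h(t)\varphi_i(x)$ ($1\le i\le d$), $\widetilde\varphi^{(h)}_{2d+1}=0$, where $h_h\in C^1_c(\mathbb R)$, $\|h_h\|_\infty\le1$, $\int_{-\infty}^sh_h=s$ for $|s|\le h$ and $|\int_{-\infty}^sh_h|\le|s|$ always (a smoothing of $\mathbf 1_{[-h,h]}-\mathbf 1_{[-2h,-h]}$). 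By (\ref{restricted1}), $\widetilde\Phi^{(h)}\in{\mathcal F}_R$, and $\sum_i\widetilde A_{i,a}\widetilde\varphi^{(h)}_i=h_h(t)\,\mathrm{div}_{\mathcal{\widetilde{H}}_{\sigma}}\Phi(x)$, so $\int_{S_u}\sum_i\widetilde A_{i,a}\widetilde\varphi^{(h)}_i\,dxdt=\int_\Omega\bigl(\int_{-\infty}^{u(x)}h_h\bigr)\mathrm{div}_{\mathcal{\widetilde{H}}_{\sigma}}\Phi\,dx\le\widetilde P_{\mathcal{\widetilde{H}}_{\sigma}}(S_u,\cdot)$; letting $h\to\infty$, dominated convergence (integrand dominated by $\|\mathrm{div}_{\mathcal{\widetilde{H}}_{\sigma}}\Phi\|_\infty\,|u|\in L^1$, since $\mathrm{div}_{\mathcal{\widetilde{H}}_{\sigma}}\Phi$ is bounded on the compact $\mathrm{supp}\,\Phi\subset\Omega$) yields $\int_\Omega u\,\mathrm{div}_{\mathcal{\widetilde{H}}_{\sigma}}\Phi\,dx\le\widetilde P_{\mathcal{\widetilde{H}}_{\sigma}}(S_u,\cdot)$, hence $u\in\mathcal{BV}^R_{\mathcal{\widetilde{H}}_{\sigma}}(\Omega)$. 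For the ``moreover'' part, test $D_{\widetilde A_{i,a}}\mathbf 1_{S_u}$ and $\partial_t\mathbf 1_{S_u}$ against $\phi(x)\zeta(t)$ with $\phi\in C_c(\Omega)$, $\zeta\in C^1_c(\mathbb R)$, $0\le\zeta\le1$, $\zeta\uparrow1$: from $\int_{-\infty}^{u(x)}\zeta'(t)\,dt=\zeta(u(x))\to1$ one gets $\langle\pi_\#\partial_t\mathbf 1_{S_u},\phi\rangle=-\int_\Omega\phi\,dx$, i.e. (ii), and the slicing identities of Theorem \ref{coarea formula} let the $t$-integration pass through $A_{i,a},A_{-i,a}$, giving $\pi_\#D_{\widetilde A_{i,a}}\mathbf 1_{S_u}=D_{A_{i,a}}u$, $\pi_\#D_{\widetilde A_{d+i,a}}\mathbf 1_{S_u}=D_{A_{-i,a}}u$, i.e. (i). Statement (iv) follows from (ii) since $\partial_t\mathbf 1_{S_u}\le0$ forces $|\partial_t\mathbf 1_{S_u}|=-\partial_t\mathbf 1_{S_u}$; (iii) is obtained, as in \cite{GMS,DMV}, by showing that the graph structure of $\partial^*S_u$ prevents cancellation of $|D_{\widetilde A_{i,a}}\mathbf 1_{S_u}|$ under $\pi$.

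\emph{Main obstacle.} The delicate point is the uniform control of the potential term $T(\widetilde\Phi)$ in the direct implication: in contrast with the classical and Carnot settings, the total $\mathcal{\widetilde{H}}_{\sigma}$-divergence of an admissible field need not vanish, so the estimate on $T$ must be extracted from the restricted constraint (\ref{restricted}) together with $\int_\Omega|x|^{-1}\,dx<\infty$ and hypothesis (\ref{eq4}); the same difficulty resurfaces in the proof of (iii), where one must verify that $\pi$ does not destroy mass of $|D_{\widetilde A_{i,a}}\mathbf 1_{S_u}|$.
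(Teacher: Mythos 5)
Your converse half (finite restricted perimeter $\Rightarrow u\in\mathcal{BV}^R_{\mathcal{\widetilde H}_\sigma}(\Omega)$) is essentially the paper's own argument in a slightly different dress: the paper tests with $g_h(t)\Phi(x)$ and subtracts the constant $(h+\tfrac12)\int_\Omega\mathrm{div}_{\mathcal{\widetilde H}_\sigma}\Phi\,dx=0$, you test with $h_h(t)\Phi(x)$ whose primitive tends to $s$; both rest on the observation that (\ref{restricted1}) is exactly the vanishing of $\int_\Omega\mathrm{div}_{\mathcal{\widetilde H}_\sigma}\Phi\,dx$, and your derivation of (ii) matches the paper's. (Your treatment of (iii) is only a pointer to \cite{GMS,DMV}, while the paper extracts (iii)--(iv) from the two-sided inequalities between $|D'1_{S_u}|(A\times\mathbb R)$, $|D_{\mathcal{\widetilde H}_\sigma}u|(A)$ and $|A|$; that is a secondary issue.)

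The genuine gap is in your direct implication, at the estimate of the potential term $T(\widetilde\Phi)$. Your slicing identity is correct, but the bound you invoke is proportional to the $t$-extent of $\mathrm{supp}\,\widetilde\Phi$, which is unconstrained in the admissible class, so it does not survive the supremum; and the single scalar constraint (\ref{restricted}) cannot repair this. Concretely, take $\widetilde\varphi_{2d+1}=0$, $\widetilde\varphi_{d+i}=-\widetilde\varphi_i$, $\widetilde\varphi_i(x,t)=\tfrac12\eta_i(x)\beta(t)$ with $\eta\in C^1_c(\Omega,\mathbb R^d)$, $\eta_i(x)$ proportional to $\mathrm{sgn}(\sigma)\,x_i/|x|$ on a fixed compact set, and $\beta$ equal to $N$ disjoint unit-mass bumps in $\{t<-M\}$ minus $N$ unit-mass bumps in $\{t>M\}$, where $M$ exceeds $\sup|u|$ on $\mathrm{supp}\,\eta$ (apply this to your bounded smooth approximants). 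After normalization $\|\widetilde\Phi\|_\infty\le1$; since $\int_{\mathbb R}\beta\,dt=0$, condition (\ref{restricted}) holds; the graph terms vanish because $\widetilde\varphi_i+\widetilde\varphi_{d+i}=0$; yet $\int_{S_u}\sum_i\widetilde A_{i,a}\widetilde\varphi_i\,dx\,dt=T(\widetilde\Phi)=N\int_\Omega\sum_i\sigma\tfrac{x_i}{|x|^2}\eta_i(x)\,dx\gtrsim N$. Hence no estimate of the kind you propose (nor any refinement using only (\ref{eq4}) and $\int_\Omega|x|^{-1}dx<\infty$) can close this step: the quantity you are trying to bound is not bounded over your class of test fields. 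This is exactly the point where the paper proceeds by a different mechanism: for smooth $u_k$ it never estimates the potential contribution over the subgraph, but rewrites the whole constrained pairing as a pairing along the graph with $(D_{\mathcal{\widetilde H}_\sigma}u_k,-\mathcal L^d)$ (its formulas (\ref{eq-3.6})--(\ref{eq-3.7})), bounds it by $|(D_{\mathcal{\widetilde H}_\sigma}u_k,-\mathcal L^d)|(A)$ uniformly in $\widetilde\Phi$, and concludes via $1_{S_{u_k}}\to1_{S_u}$ in $L^1$ and lower semicontinuity. If you wish to keep the slicing framework, you must likewise convert $T(\widetilde\Phi)$ into a graph-type term rather than estimate it over the subgraph slab; note also that the same family of fields shows the difficulty is intrinsic to testing on the unbounded cylinder $\Omega\times\mathbb R$ (Definition \ref{def-restricte} is stated for bounded domains), so this step cannot simply be waved through as a ``bookkeeping'' perturbation of the classical argument.
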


\begin{proof}
Suppose first that ${{\widetilde
P}_{\mathcal{\widetilde{H}}_{\sigma}}}({S_u},\Omega  \times
\mathbb{R}) < \infty$. In this case,  the measures
$D_{\widetilde{A}_{i,a}}{1_{{S_u}}}$ can be extended as linear
functionals acting on continuous and bounded functions in $\Omega
\times \mathbb{R} $ by means of the Lebesgue theorem. We choose a
special sequence in $C_c^\infty \mathbb{(R} )$, denoted by $ \{
{{g_h}}  \}$,  such that
\[{g_h}(t) = \left \{ {\begin{array}{l}
        {1, \ \ \ | t  | \le h},\\
        {0, \ \ \ | t | \ge h + 1,}
\end{array}}  \right.\]
and
\[\int_\mathbb R {{g_h}(t)dt}  = 2h + 1. \]

Let $\Phi  =(\varphi_1,\ldots,\varphi_{2d}) \in C_c^1( \Omega,
{\mathbb{R} ^{2d}})$ with $ | \Phi  | \le 1$ and satisfy
(\ref{restricted1}).  By the dominated convergence theorem, we have
\begin{align*}
    \int_{\Omega  \times \mathbb R} {\Phi  (x)\cdot d(D'{1_{{S_u}}})(x,t)}
    &= \mathop {\lim }\limits_{h \to  + \infty } \int_{\Omega  \times
    \mathbb R} {{g_h}(t)\Phi  (x)\cdot d(D'{1_{{S_u}}})(x,t)}  \\
    &= \mathop {\lim }\limits_{h \to  + \infty } \int_{\Omega  \times
    \mathbb R} {{1_{{S_u}}}(x,t){g_h}(t)\mathrm{div}{_{\mathcal{\widetilde{H}}_\sigma}}
    \Phi  (x)dxdt} \\
    &= \mathop {\lim }\limits_{h \to  + \infty } \int_\Omega  {\Big( {
    \int_{ - \infty }^{u(x)} {{g_h}(t)dt} } \Big)\mathrm{div}{_{\mathcal{\widetilde{H}}_
    \sigma}}\Phi  (x)dx} .
\end{align*}

Via the definition of ${g_h}(t)$, for every $z \in \mathbb R$ and
every $h \in \mathbb N$, we  get
$$ \int_{ - \infty }^z {{g_h}(t)dt}
\le  | z  | + h + \frac{1}{2}$$
 and
$$\mathop {\lim }\limits_{h \to  + \infty } \Big( {\int_{ - \infty }^z {{g_h}
(t)dt}  - h - \frac{1}{2}} \Big) = z,$$
 where  we have used the fact
\[\int_\Omega  {{\rm{di}}{{\rm{v}}_ {\mathcal{\widetilde{H}}_{\sigma}}}}
\Phi  (x)dx = 0 . \]

Consequently, using the dominated convergence theorem again, we can deduce that
\begin{equation}\label{ eq-3.1}
    \begin{split}
    \int_{\Omega  \times \mathbb R} {\Phi (x)\cdot d(D'{1_{{S_u}}})(x,t)}
    &= \mathop {\lim }\limits_{h \to  + \infty } \int_\Omega  {\Big( {\int_{
    - \infty }^{u(x)} {{g_h}(t)dt}  - h - \frac{1}{2}}\Big)\mathrm{div}{_{{{
    \mathcal {\widetilde{H}}}}_\sigma}}\Phi  (x)dx} \\
    &= \int_\Omega  {u(x)\mathrm{div}{_{\mathcal{\widetilde{H}}_{\sigma}}}\Phi  (x)dx} \\
    &= \int_\Omega  {\Phi  (x)\cdot d({D _{\mathcal{\widetilde{H}}_{\sigma}}}u)} .
    \end{split}
\end{equation}
In particular, $u \in \mathcal{B} {\mathcal{V}^R
_{\mathcal{\widetilde{H}}_{\sigma}}}(\Omega )$ and for any open set
$A \subseteq \Omega $, we have
\begin{eqnarray}\label{S-2}
&&  | {{D_{\mathcal{\widetilde{H}}_{\sigma}}}u}   |(A) \le   |
{D'{1_{{S_u} }}}   |(A \times \mathbb{R} ).
\end{eqnarray}

Before proving the reverse implication, we firstly  consider two
facts.   For any $\psi  \in C_c^1(\Omega)$ one has
\begin{equation}\label{ eq-3.2}
    \begin{split}
    \int_{\Omega  \times \mathbb R} {\psi  (x)d({D_{_{\widetilde{A}_{2d+1,a}}}}{1_{{S_u}}})
    (x,t)}
    &=\int_{\Omega  \times \mathbb R} {\psi  (x)d({\frac{\partial}{\partial t}}{1_{{S_u}}
    })(x,t)}\\
    &= \mathop {\lim }\limits_{h \to  + \infty } \int_{\Omega  \times
    \mathbb R} {{g_h}(t)\psi  (x)d({\frac{\partial}{\partial t}}{1_{{S_u}}})(x,t)} \\
    &=  - \mathop {\lim }\limits_{h \to  + \infty } \int_{\Omega  \times
    \mathbb R} {{1_{{S_u}}}(x,t){{g'_h}}(t)\psi  (x)dxdt} \\
    &=  - \mathop {\lim }\limits_{h \to  + \infty } \int_\Omega  {\Big( {
    \int_{ - \infty }^{u(x)} {{g'_h}(t)dt} } \Big)\psi  (x)d x} \\
    &=  - \mathop {\lim }\limits_{h \to  + \infty } \int_\Omega  {{g_h}(u(x))
    \psi  (x)dx} \\
    &=  - \int_\Omega  {\psi  (x)dx} .
    \end{split}
\end{equation}
Moreover, for any open set $A \subset \Omega $,
\begin{equation}\label{S-3}
   |A|\le  | {{D_{_{\widetilde{A}_{2d+1,a}}}}{1_{{S_u}}}}  |
    (A \times \mathbb{R} ).
\end{equation}

Furthermore, if $\Phi   \in C_c^1(\Omega ,{\mathbb{R}^{2d}})$
  satisfies
$$\int_{ {\Omega}} { \Big( {\sum\limits_{k = 1}^{d} {\sigma
   \frac{{{x_k}}}{{{{ | x  |}^2}}}( {{ {\varphi} _k}
   (x) - { {\varphi} _{k + d}}(x)} )} }  \Big)dx} =0,$$  by (\ref{ eq-3.1})
and (\ref{ eq-3.2}),
we can get
\[\int_{\Omega  \times \mathbb R} {\Phi  (x)\cdot d(D{1_{{S_u}}})(x,t)}  =
\int_\Omega  {\Phi  (x)\cdot d({D
_{\mathcal{\widetilde{H}}_{\sigma}}}u, - {\mathcal {L} ^d})(x)} ,\]
which derives for any open set $A \subset \Omega $,
\begin{equation}\label{S-4}
    | {({D_{\mathcal{\widetilde{H}}_{\sigma}}}u, - {\mathcal{L} ^d})}
    |(A) \le  | {D{1_{{S_u}}}}  |(A \times \mathbb{R} ).
\end{equation}

Suppose now that $u \in \mathcal{B} {\mathcal{V}^R
_{\mathcal{\widetilde{H}}_{\sigma}}} (\Omega )$  satisfies the
condition (\ref{eq4}). Let $A \subset \Omega$ be open. Similarly to
Theorem \ref{th-3.3}, we can choose a sequence of smooth functions $
\{ {{u_k}}  \}$ in $C_c^\infty (A) \cap {{\mathcal B}}{{{\mathcal
V}}^R_{{{\mathcal {\widetilde{H}}}}_\sigma}}(A)$ such that ${u_k}
\to u$ in ${L^1}$ and
\[\int_A { | {{\nabla^R _{{{ \mathcal {\widetilde{H}}}}_\sigma}}{u_k}} (x) |dx}
\to  | {{\nabla^R _{\mathcal{\widetilde{H}}_{\sigma} }}u}  |(A)\] as
$k \to \infty $. In the classical case (cf. \cite[Theorem 1]{GMS}),
we observe that for any $\widetilde{\Phi}   \in C_c^1 (A \times
\mathbb R)$,   \[\int_{A \times \mathbb R} {\widetilde{\Phi} (x,t)
d(
\partial_ {x_i}{1_{{S_{{u_k}}}}}) = \int_A {\widetilde{\Phi} (
{x,{u_k}(x)} ) \frac{\partial}{\partial x_i}{u_k}(x)dx} },\  i = 1,
\ldots ,d.\]

For convenience, we write
 $$\bar \nabla {1_{{S_{{u_k}}}}} = \Big(
\frac{\partial}{\partial x_1} {1_{{S_{{u_k}}}}}, \ldots
,\frac{\partial}{\partial x_d}{1_{{S_{{u_k}}}}},
\frac{\partial}{\partial x_1} {1_{{S_{{u_k}}}}}, \ldots ,
\frac{\partial}{\partial x_d}{1_{{S_{{u_k}}}}},
\partial_{t} {1_{{S_{{u_k}}}}}\Big) $$
and
 $$\bar \nabla
'{1_{{S_{{u_k}}}}} = \Big( \frac{\partial}{\partial x_1}{1_
{{S_{{u_k}}}}}, \ldots , \frac{\partial}{\partial
x_d}{1_{{S_{{u_k}}}}},\frac{\partial}{\partial {x_1}}{1_{
{S_{{u_k}}}}}, \ldots , \frac{\partial}{\partial
x_d}{1_{{S_{{u_k}}}}}\Big).$$
Then for any $\widetilde{\Phi} \in C_c^1
(A \times \mathbb R)$ with $\big| \widetilde{\Phi} \big| \le 1$ and
satisfying (\ref{restricted}), we have
\begin{equation}\label{eq-3.6}
\begin{split}
&\int_{A \times \mathbb R} {\widetilde{\Phi} (x,t)\cdot d{(\bar \nabla '{1_{{S_{{u_k}}}}})}}\\
&= \int_{A \times \mathbb R} {\widetilde{\Phi} (x,t)\cdot d\Big( \frac{\partial}{\partial x_1}{1_{{S_
{{u_k}}}}}, \ldots , \frac{\partial}{\partial x_d}{1_{{S_{{u_k}}}}}, \frac{\partial}{\partial x_1}
{1_{{S_{{u_k}}}}}, \ldots , \frac{\partial}{\partial x_d}{1_{{S_{{u_k}}}}}\Big)}  \\
    &=\int_A {\widetilde{\Phi} ( {x,{u_k}(x)} )\cdot\Big( \frac{\partial}{\partial x_1}{u_k}(x),
    \ldots , \frac{\partial}{\partial x_d}{u_k}(x), \frac{\partial}{\partial x_1}{u_k}(x), \ldots ,
    \frac{\partial}{\partial x_d}{u_k}(x)\Big)dx}  \\
    &+ \int_A {\widetilde{\Phi} ( {x,{u_k}(x)} )\cdot\Big(\sigma \frac{{{x_1}}}
    {{{{ | x  |}^2}}}{u_k}(x), \ldots ,
    \sigma \frac{{{x_d}}}{{{{ | x  |}^2}}}{u_k}(x), - \sigma \frac{{{x_1}}}
    {{{{ | x  |}^2}}}{u_k}(x), \ldots ,
    - \sigma \frac{{{x_d}}}{{{{ | x  |}^2}}}{u_k}(x)\Big)dx}
    \\
    &= \int_A {{u_k}(x)\mathrm{div}{_{\mathcal{\widetilde{H}}_{\sigma}}}\widetilde{\Phi} (
    {x,{u_k}(x)} )dx} \\
    &=\int_A {\widetilde{\Phi} ( {x,{u_k}(x)} )d({D_{{{\mathcal
    {\widetilde{H}}}}_\sigma}}{u_k})},
    \end{split}
\end{equation}
where we have used the fact that $x \to  \int_{ - \infty }^{{u_k}} {\Phi
(x,t)dt} $ is in $C_c^1(A) $. In a similar way,
\begin{equation}\label{eq-3.7}
    \begin{split}
    \int_{A \times \mathbb R} {\widetilde{\Phi}(x,t) d\Big({D_{_{\widetilde{A}_{2d+1,a}}}}{1_{{S_u}}}\Big)}
    &=\int_{A \times \mathbb R} {\widetilde{\Phi}(x,t) d({\frac{\partial}{\partial t}}{1_{{S_{{u_k}}}}})}\\
    &= -\int_A {\Big( {\int_{ - \infty }^{{u_k}(x)} {{\frac{\partial}{\partial t}}\widetilde{\Phi}
    (x,t)dt} } \Big)dx} \\
    &=  - \int_A {\widetilde{\Phi} (x,{u_k}(x))dx} .
    \end{split}
\end{equation}

Formulas (\ref {eq-3.6}) and (\ref {eq-3.7}) imply that for any
$\widetilde{\Phi}  \in C_c^1(A \times \mathbb R,{\mathbb R^{2d +
1}})$ satisfying (\ref{restricted}),
\[\int_{A \times \mathbb R} {\widetilde{\Phi} (x,t) \cdot d({\bar \nabla }
{1_{{S_{{u_k}}}}})}  = \int_A {\widetilde{\Phi} (x,{u_k}(x))\cdot
d({D _{ \mathcal{\widetilde{H}}_{\sigma}}}{u_k},{{-{\mathcal{L}
}}^d})(x)} .\] Since ${1_{{S_{{u_k}}}}} \to {1_{{S_u}}}$ in ${L^1}(A
\times \mathbb R)$, we   get
\begin{equation}\label{eq-3.8}
    \begin{split}
     | {D{1_{{S_u}}}}  |(A \times \mathbb R)
    &= | {\bar \nabla {1_{{S_u}}}}  |(A \times \mathbb R)\\
    &\le \mathop {\lim \inf }\limits_{k \to  + \infty }  | {\bar
    \nabla {1_{{S_{{u_k}}}}}}  |(A \times \mathbb R)\\
    &\le \mathop {\lim \inf }\limits_{k \to  + \infty }  | {({D _
    {\mathcal{\widetilde{H}}_{\sigma}}}{u_k}, - {{{\mathcal L}}^d})}  |(A)\\
    &=  | {({D _{\mathcal{\widetilde{H}}_{\sigma}}}{u}, - {{{\mathcal L}}^d}
    )}  |(A) <  + \infty,
    \end{split}
\end{equation}
which indicates that ${{\widetilde
P}_{\mathcal{\widetilde{H}}_{\sigma}}}({S_u},\Omega  \times
\mathbb{R}) < \infty$. Similarly, using the lower semicontinuity, we
obtain
\begin{equation}\label{eq-3.9}
    \begin{split}
     | {D'{1_{{S_u}}}}  |(A \times \mathbb R) \le  | {{D_
    {\mathcal{\widetilde{H}}_{\sigma}}}u}  |(A), \\
     | {{D_{_{\widetilde{A}_{2d+1,a}}}}{1_{{S_u}}}}  |(A \times \mathbb R) \le |A| <  + \infty .
    \end{split}
\end{equation}

Eventually, statements (i) and (ii) follow from (\ref { eq-3.1}) and
(\ref{ eq-3.2}), while statements (iii) and (vi) are consequences of
formulas (\ref{S-2}), (\ref{S-3}), (\ref{S-4}), (\ref{eq-3.8}) and
(\ref{eq-3.9}).
\end{proof}

Let $u =(u_1,\ldots, u_m)\in {{\mathcal B}}{{{\mathcal
V}}^R_{\mathcal{\widetilde{H}}_{\sigma}}}(\Omega,{ \mathbb R^m})$,
that is, $ u_i\in {{\mathcal B}}{{{\mathcal
V}}^R_{\mathcal{\widetilde{H}}_{\sigma}}}(\Omega)$ for
$i=1,\ldots,m$. By the Lebesgue decomposition theorem for measures,
we can decompose its distributional   derivatives as $${D
_{\mathcal{\widetilde{H}}_{\sigma}}}u =D
_{\mathcal{\widetilde{H}}_{\sigma}}^Au + D
_{\mathcal{\widetilde{H}}_{\sigma}}^Su,$$ where $D
_{\mathcal{\widetilde{H}}_{\sigma}}^Au$ is absolutely continuous
with respect to the Lebesgue measure ${{{\mathcal L}}^d}$ and $D
_{\mathcal{\widetilde{H}}_{\sigma}}^Su$ is singular with respect to
${{{\mathcal L}}^d}$. Furthermore, write
$D_{\mathcal{\widetilde{H}}_{\sigma}}^Au = Mu{{{\mathcal L}}^d}$,
where $Mu \in L_{loc}^1(\Omega ,{\mathbb R^{2d}})$ is the
approximate differential of $u$.

In this case, the Radon-Nikodym derivative $\frac{{D
_{\mathcal{\widetilde{H}}_{\sigma}}^Su}}{{ | {D
_{\mathcal{\widetilde{H}}_{\sigma}}^Su}  |}}$ of ${D
_{\mathcal{\widetilde{H}}_{\sigma}}^Su}$ with respect to its total
variation ${ | {D_{\mathcal{\widetilde{H}}_{\sigma}}^Su}  |}$ is a
${ | { D _{\mathcal{\widetilde{H}}_{\sigma}}^Su}
 |}$-measurable map from $\Omega $ to ${\mathbb R^{{2d} \times
m}}$. We can define the  normal to $x$ as $${v_S}(x): =\frac{{D
_{\mathcal{\widetilde{H}}_{\sigma}}^Su}}{{ | { D
_{\mathcal{\widetilde{H}}_{\sigma}}^Su}  |}}\in {\mathbb R^{2d}}.$$
The normal ${v_S}(x) = ( {{{( {{v_S}(x)} )}_1}, \ldots, {{(
{{v_S}(x)} )}_{2d}}} )$ is defined up to sign and it can be
canonically identified with a   vector at $x$ by $${v_S}(x) = {(
{{v_S}(x)} )_1}{D_{{A_{1,a}}}}(x) + \ldots + {( {{v_S}(x)}
)_{2d}}{D_{{A_{-d,a}}}}(x).$$

We can also consider the polar decomposition ${D
_{\mathcal{\widetilde{H}}_{\sigma}}}u = {\sigma _u}\big| {{D
_{\mathcal{\widetilde{H}}_{\sigma}}}u} \big|$, where ${\sigma
_u}:\Omega \to {\mathbb S^{2d - 1}}$ is a $ | {{D
_{\mathcal{\widetilde{H}}_{\sigma}}}u}  |$-measurable function. If
$u = {1_S}$ is the characteristic function of a set $S \subset
\Omega \times \mathbb R$ of finite
$\widetilde{\mathcal H}_\sigma$-restricted perimeter in $\Omega \times
\mathbb R$, we write $D{1_S} = {v_S} \big| D 1_S \big|$
for some Borel function ${v_S} = ( {{{({v_S})}_1}, \ldots
,{{({v_S})}_{2d+1}}} )$ called  inner normal to $S$.

Via Theorem \ref{subgraph-1}, we  can deduce the  following result.
\begin{theorem}\label{subgraph-2}
Let $\Omega  \subset {\mathbb R^d}$ be a bounded open  domain. If $u
\in \mathcal B{\mathcal V^R_{\mathcal{\widetilde{H}}_{\sigma}}}
(\Omega )$    satisfies the condition (\ref{eq4}), define
\[E: = \Big\{ (x,t) \in \Omega  \times \mathbb R:\ {(v_{S_u})_{2d + 1}}(x,t) = 0\Big\} \]
and
\[T: = \Big\{ (x,t) \in \Omega  \times \mathbb R:\ {(v_{S_u})_{2d + 1}}(x,t) \ne 0\Big\} .\]
Then, the following identities are valid:
\begin{equation}\label{eq-3.1}
v_{S_u}(x,t) = ({\sigma _u}(x),0){\rm{~for~}}{{\widetilde
P}_{\mathcal{\widetilde{H}}_{\sigma}}} ({S_u},\Omega\times \mathbb
R)-a.e.\ (x,t) \in E;
\end{equation}
\begin{equation}\label{eq-3.2}
v_{S_u}(x,t) = \frac{{(Mu(x), - 1)}}{{\sqrt {1 + {{ | {Mu(x)}
 |}^2}} }}{\rm{~for~}}{{\widetilde
P}_{\mathcal{\widetilde{H}}_{\sigma}}}({S_u}, \Omega\times \mathbb
R)-a.e. {\rm{~}}(x,t) \in T;\\
\end{equation}
\begin{equation}\label{eq-3.3}
 {\pi _\# }(D{1_{{S_u}}} \llcorner E) = (D _{\mathcal{\widetilde{H}}_{\sigma}}^Su,0);
\end{equation}
\begin{equation}\label{eq-3.4}
{\pi _\# }(D{1_{{S_u}}} \llcorner T) = (D
_{\mathcal{\widetilde{H}}_{\sigma}}^Su, - {{{\mathcal L}}^d}).
\end{equation}
\end{theorem}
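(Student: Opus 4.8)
The plan is to deduce Theorem~\ref{subgraph-2} from the structural information on $D1_{S_u}$ already contained in Theorem~\ref{subgraph-1}, following the Euclidean scheme of \cite{MV} and its Carnot-group version \cite{DMV}, adapted to the operators $A_{i,a}$. Since $\Omega$ is bounded and $u\in\mathcal B\mathcal V^R_{\widetilde{\mathcal H}_\sigma}(\Omega)$ obeys (\ref{eq4}), Theorem~\ref{subgraph-1} applies: $S_u$ has finite $\widetilde{\mathcal H}_\sigma$-restricted perimeter in $\Omega\times\R$, the measures $D1_{S_u}$ and $D_{\widetilde{\mathcal H}_\sigma}u$ are well defined, one has $\pi_\#D'1_{S_u}=D_{\widetilde{\mathcal H}_\sigma}u$ and $\pi_\#\frac{\partial}{\partial t}1_{S_u}=-\mathcal L^d$, together with the corresponding total-variation identities; in particular $\pi_\#D1_{S_u}=(D_{\widetilde{\mathcal H}_\sigma}u,-\mathcal L^d)$. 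I would also record at the outset the observation proper to this operator: since $A_{i,a}u-\partial_{x_i}u=\sigma\frac{x_i}{|x|^2}u$ and $A_{-i,a}u-\partial_{x_i}u=-\sigma\frac{x_i}{|x|^2}u$ are absolutely continuous with respect to $\mathcal L^d$ (a function smooth and bounded on $\overline\Omega\subset\R^d\setminus\{0\}$ times $u\in L^1$), the singular part $D^S_{\widetilde{\mathcal H}_\sigma}u$ agrees, in each of its two $d$-blocks, with the ordinary singular gradient $D^su$; thus the genuinely geometric part of the analysis of $S_u$ is the Euclidean one, and the perturbation only disturbs the absolutely continuous part.

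With the decomposition $\Omega\times\R=E\cup T$ of the statement in hand, I would treat $T$ first. On $T$ the last component $(v_{S_u})_{2d+1}$ is nonzero, so by De~Giorgi's structure theorem the portion of $\partial^*S_u$ over $T$ is, up to negligible sets, a rectifiable graph over $\Omega$; the standard blow-up analysis for subgraphs of BV functions (cf.\ \cite{GMS}, \cite{MV}), carried out with the family $(A_{\pm i,a})$ in place of $\nabla$, shows that at $\mathcal L^d$-a.e.\ point $x$ of approximate differentiability of $u$ the set $S_u$ blows up at $(x,u(x))$ to the $\widetilde{\mathcal H}_\sigma$-subgraph of the linearisation of $u$ at $x$; reading off the inner normal of that limit set gives (\ref{eq-3.2}). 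Because $(v_{S_u})_{2d+1}=0$ on $E$, we have $\frac{\partial}{\partial t}1_{S_u}=\frac{\partial}{\partial t}1_{S_u}\llcorner T$, whence $\pi_\#\big(\frac{\partial}{\partial t}1_{S_u}\llcorner T\big)=-\mathcal L^d$; combining this with (\ref{eq-3.2})—which expresses the horizontal block of $D1_{S_u}\llcorner T$ as $Mu(x)$ times its $(2d+1)$-st component—and pushing forward through $\pi$ yields (\ref{eq-3.4}). Subtracting that identity from $\pi_\#D1_{S_u}=(D_{\widetilde{\mathcal H}_\sigma}u,-\mathcal L^d)$ gives $\pi_\#(D1_{S_u}\llcorner E)=(D^S_{\widetilde{\mathcal H}_\sigma}u,0)$, which is (\ref{eq-3.3}).

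On $E$ the vanishing of $(v_{S_u})_{2d+1}$ forces $v_{S_u}=(w,0)$ with $|w|=1$ $|D1_{S_u}|\llcorner E$-a.e.; it remains to identify $w$ with $\sigma_u$. I would compare total variations: from (\ref{eq-3.3}), $|D^S_{\widetilde{\mathcal H}_\sigma}u|\le\pi_\#(|D1_{S_u}|\llcorner E)$, and equality holds because—$S_u$ being a subgraph—over each vertical fibre $\{x\}\times\R$ the portion of the reduced boundary of $S_u$ lying in $E$ consists of segments along which $w$ is constant and horizontal (the jump direction over $J_u$, the Cantor direction over the support of $D^cu$), so no cancellation occurs when projecting through $\pi$; hence $\pi_\#(w\,|D1_{S_u}|\llcorner E)=D^S_{\widetilde{\mathcal H}_\sigma}u=\sigma_u|D^S_{\widetilde{\mathcal H}_\sigma}u|$ and $\pi_\#(|D1_{S_u}|\llcorner E)=|D^S_{\widetilde{\mathcal H}_\sigma}u|$ have the same mass, which forces the density $w(x,t)$ to equal $\sigma_u(x)$ for $|D1_{S_u}|\llcorner E$-a.e.\ $(x,t)$, i.e.\ (\ref{eq-3.1}). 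This establishes (\ref{eq-3.1})--(\ref{eq-3.4}). The main obstacle is the blow-up/structure step on $T$: one must verify that the Euclidean subgraph theorem survives the replacement of $\nabla$ by $(A_{\pm i,a})$, which carry a genuine zeroth-order term, and this is precisely where the absolute continuity of the perturbation terms and the ``restricted'' constraint (\ref{restricted})—which is what renders the push-forward identities of Theorem~\ref{subgraph-1} defect-free—are decisive.
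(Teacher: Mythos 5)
Your route is genuinely different from the paper's, and in its present form it has two real gaps at exactly the two pointwise statements that constitute the heart of the theorem. The paper does not argue by blow-up at all: it disintegrates ${{\widetilde P}_{\mathcal{\widetilde{H}}_{\sigma}}}({S_u},\Omega\times\mathbb R)$ over $|({D_{\mathcal{\widetilde{H}}_{\sigma}}}u,-{\mathcal L}^d)|$ with probability kernels $\mu_x$ on the fibres, uses the push-forward identities of Theorem \ref{subgraph-1} to identify the barycenter $\int_{\mathbb R}v_{S_u}(x,t)\,d\mu_x(t)$ with $(\sigma_u(x),0)$ on the singular carrier $I$ and with $(Mu(x),-1)/\sqrt{1+|Mu(x)|^2}$ off $I$, and then exploits the rigidity that an average of unit vectors against a probability measure can itself be a unit vector only if $v_{S_u}(x,\cdot)$ equals that vector $\mu_x$-a.e. (the scalar-product trick with $(\sigma_u(x),0)$); this yields (\ref{eq-3.1}) and (\ref{eq-3.2}) simultaneously, and (\ref{eq-3.3}), (\ref{eq-3.4}) then drop out. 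Your first gap is the step you call a ``standard blow-up analysis carried out with $(A_{\pm i,a})$ in place of $\nabla$'': it is not a routine transfer, because blow-ups of $S_u$ only see first-order Euclidean information — the zeroth-order terms $\pm\sigma x_i|x|^{-2}u$ scale away under dilation — whereas the right-hand side of (\ref{eq-3.2}) involves $Mu$, the density of $D^A_{\mathcal{\widetilde{H}}_{\sigma}}u$, which contains precisely those potential contributions (this is the flip side of your own correct remark that the perturbation only affects the absolutely continuous part). Reading the inner normal off the blow-up limit therefore produces a vector built from the Euclidean approximate gradient alone, and to upgrade it to the formula in (\ref{eq-3.2}) (and to pass from ``$\mathcal L^d$-a.e.\ differentiability point'' to ``$\widetilde P$-a.e.\ on $T$'', which also needs $\pi_\#(|D1_{S_u}|\llcorner T)\ll\mathcal L^d$, not just rectifiability) you end up invoking the push-forward/density comparison $\pi_\#D1_{S_u}=({D_{\mathcal{\widetilde{H}}_{\sigma}}}u,-\mathcal L^d)$ pointwise — i.e.\ essentially the paper's disintegration argument, which you have not carried out.

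The second gap is that your proof of (\ref{eq-3.1}) is circular. Granting (\ref{eq-3.3}), you need the mass identity $\pi_\#\big(|D1_{S_u}|\llcorner E\big)=|D^S_{\mathcal{\widetilde{H}}_{\sigma}}u|$ in order to invoke no-cancellation rigidity, and you justify it by saying that over each fibre the part of the reduced boundary in $E$ consists of segments along which $w$ is ``constant and horizontal''; but fibrewise constancy of $w$ equal to $\sigma_u(x)$ is exactly the content of (\ref{eq-3.1}), so nothing has been proved, and moreover the description is only accurate for the jump part (the Cantor part of $D^S u$ contributes no vertical segments, so the geometric picture gives no information there). An honest version of your scheme would have to establish the fibrewise rigidity independently — which is what the paper's barycenter argument does in one stroke, for $E$ and $T$ at once, using only $\mu_x(\mathbb R)=1$ and $|v_{S_u}|=1$. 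Your derivation of (\ref{eq-3.4}) from (\ref{eq-3.2}) together with $\partial_t 1_{S_u}=\partial_t 1_{S_u}\llcorner T$, and of (\ref{eq-3.3}) by subtraction from $\pi_\#D1_{S_u}=({D_{\mathcal{\widetilde{H}}_{\sigma}}}u,-\mathcal L^d)$, is fine once (\ref{eq-3.2}) is in hand; but as it stands the two pointwise identifications (\ref{eq-3.1}) and (\ref{eq-3.2}) are respectively circular and unproven.
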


\begin{proof}
From the above Theorem \ref{subgraph-1} and similarly to
\cite[Theorem 2.28]{AFP}, we can decompose the perimeter
${{\widetilde P}_{\mathcal{\widetilde{H}}_{\sigma}}}({S_u},
\Omega\times \mathbb R)$ into $ | {({D
_{\mathcal{\widetilde{H}}_{\sigma}}}u, - {{{\mathcal L}}^d})}  |$:
for every $x \in \Omega $, there exists a probability measure ${\mu
_x}$ on $\mathbb R$ such that for every Borel function $g \in
{L^1}(\Omega  \times \mathbb R, {{\widetilde
P}_{\mathcal{\widetilde{H}}_{\sigma}}}({S_u},\Omega\times \mathbb
R))$,
\begin{equation*}
\int_{\Omega  \times \mathbb R} {g(x,t)d{{\widetilde
P}_{\mathcal{\widetilde{H}}_{\sigma}}} ({S_u},\Omega\times \mathbb
R)(x,t)}  = \int_\Omega {\Big( {\int_\mathbb R {g(x,t)d{\mu _x}(t)} }
\Big)d} | {({D_{\mathcal{\widetilde{H}}_{\sigma}}}u, - {{{\cal L}}^d})}
 |(x).
\end{equation*}
Therefore, for any Borel function $\Phi :\Omega  \to \mathbb R$, we can get
\begin{equation} \label{eq3.1}
    \begin{aligned}
    \int_\Omega  {\Phi (x)d({D _{\mathcal{\widetilde{H}}_{\sigma}}}u, - {{{
    \mathcal L}}^d})} (x)
    &= \int_\Omega  {\Phi (x)d{\pi _\# }({v_{{S_u}}}{{\widetilde P}_{\mathcal
    {\widetilde{H}}_{\sigma}}}({S_u}, \Omega\times \mathbb R))} (x)\\
    &= \int_{\Omega  \times \mathbb R} {\Phi (x){v_{{S_u}}}(x,t)d({{
    \widetilde P}_{\mathcal{\widetilde{H}}_{\sigma}}}({S_u},\Omega\times \mathbb R))} (x,t)\\
    &= \int_\Omega  {\Phi (x)\Big( {\int_\mathbb R {{v_{{S_u}}}(x,t)
    d{\mu _x}(t)} } \Big)d | {({D _{\mathcal{\widetilde{H}}_{\sigma}}}u, - {{{
    \cal L}}^d})}  |(x)}.
    \end{aligned}
\end{equation}
Since $D _{\mathcal{\widetilde{H}}_{\sigma}}^Au$ and $D
_{\mathcal{\widetilde{H}}_{\sigma}}^Su$ are mutually singular, we
have
$$ | {({D_{\mathcal{\widetilde{H}}_{\sigma}}}u, - {{{\mathcal L}}^d})}
 |= | {(D_{\mathcal{\widetilde{H}}_{\sigma}} ^Au, - {{{\mathcal
L}}^d})}  | +  | {(D_{{{\mathcal {\widetilde{H}}}}_\sigma}^ Su,0)}
 |=\sqrt {1 + {{ | {Mu}  |}^2}} {{{\mathcal L}}^
d}+ | {D _{\mathcal{\widetilde{H}}_{\sigma}}^Su}  |,
$$ and   (\ref{eq3.1}) gives
\begin{equation*} \label{eq3.2}
    \int_\Omega  {\Phi (x)d( {(Mu, - 1){{{\mathcal L}}^d} +
    ({\sigma _u},0) | {D _{\mathcal{\widetilde{H}}_{\sigma}}^Su}  |} )}
    {\rm{~~~~~~~~~~~~~~~~~~~~~~~~~~~~~~~}}
\end{equation*}
\begin{equation}\label{eq3.2}
    = \int_\Omega  {\Phi (x)\Big( {\int_\mathbb R {v_{S_u}(x,t)d{\mu _x}
    (t)} } \Big)d( {\sqrt {1 + {{ | {Mu}  |}^2}} {{{\mathcal L}}
    ^d}+ | {D_{\mathcal{\widetilde{H}}_{\sigma}}^Su}  |} )} (x).
\end{equation}

Let $I$ denote the subset of $\Omega$ such that its Lebesgue measure
$|I| = 0$ and $ | {D _{\mathcal{\widetilde{H}}_{\sigma}}^Su}
 |(\Omega \backslash I) = 0$. Considering Borel test functions
$\varphi$ such that $\varphi=0$ in $\Omega \backslash I$, we deduce
that for $ | {D _{\mathcal{\widetilde{H}}_{\sigma}}^Su}
 |$-a.e. $x \in I$ one has
\[({\sigma _u}(x),0) = \int_\mathbb R {v_{S_u}(x,t)d{\mu _x}(t)} .
\] Taking the scalar product with $({\sigma _u} (x),0)$ on both
sides, we can get
\begin{equation*}
\left \langle {({\sigma _u}(x),0),\int_\mathbb R {v_{S_u}(x,t)d{\mu
_x}(t) } }  \right\rangle  = 1.
\end{equation*}

Since ${{\mu _x}(\mathbb R) = 1}$ and (for $ | {({D _{{{\mathcal
{\widetilde{H}}} }_\sigma}}u, - {{{\mathcal L}}^d})}  |$-a.e. $x \in
\Omega $) $ | {v_{S_u} (x,t)}  | = 1$ for ${\mu _x}$-a.e. $t$, we
conclude that
\begin{equation*}
    v_{S_u}(x,t) = ({\sigma _u}(x),0){\rm{~for}}\  | {D _{\mathcal{
    \widetilde{H}}_{\sigma}}^Su} |{\rm{-a.e.~}}x \in I {\rm{~and~}}{\mu
_x}{\rm{-a.e.~}}t\in
    \mathbb R,
\end{equation*}
i.e.,
\begin{equation}\label{eq3.4}
   v_{S_u}(x,t) = ({\sigma _u}(x),0) {\rm{~for~}} {{\widetilde P}_{\mathcal
   {\widetilde{H}}_{\sigma}}}({S_u}, \Omega\times \mathbb R){\rm{-a.e.~}}(x,t)
   \in I\times \mathbb R,
\end{equation}
which implies that ${{\widetilde
P}_{\mathcal{\widetilde{H}}_{\sigma}}}({S_u}, \Omega\times \mathbb
R)$-a.e. $(x,t) \in I\times \mathbb R$ belongs to $E$ and that
${{\widetilde
P}_{\mathcal{\widetilde{H}}_{\sigma}}}({S_u},\Omega\times \mathbb
R)$-a.e. $(x,t) \in T$ belongs to $(\Omega \backslash I) \times
\mathbb R$.

Using  (\ref{eq3.2}) again and letting $\Phi =0$ on $I$, we obtain
\begin{equation*}
\begin{split}
    &\int_\Omega  {\Phi (x)\frac{{(Mu(x), - 1)}}{{\sqrt {1 + {{ |
    {Mu(x)}  |}^2}} }}\sqrt {1 + {{ | {Mu(x)}  |}^2}} dx} \\
    &=\int_\Omega  {\Phi (x)\Big( {\int_\mathbb R {v_{S_u}(x,t)d{\mu _x}
    (t)} } \Big)\sqrt {1 + {{ | {Mu(x)}  |}^2}} dx} .
\end{split}
\end{equation*}
Then, for  a.e. $x \in \Omega \backslash I$, we have
\[\int_\mathbb R {v_{S_u}(x,t)d{\mu _x}(t)}  = \frac{{(Mu(x), - 1)}}{{\sqrt
{1 + {{ | {Mu(x)}  |}^2}} }}. \] Consequently, for  a.e. $x \in
\Omega \backslash I $ and ${\mu _x}$-a.e. $t\in\mathbb R$, we can
deduce that
\begin{equation*}
    v_{S_u}(x,t) = \frac{{(Mu(x), - 1)}}{{\sqrt {1 + {{ | {Mu(x)}  |}
    ^2}} }},
\end{equation*}
or equivalently, for ${{\widetilde
P}_{\mathcal{\widetilde{H}}_{\sigma}}}({S_u},\Omega\times \mathbb
R)$-a.e. $(x,t) \in (\Omega \backslash I)\times \mathbb R, $
\begin{equation*}\label{eq-3.5}
    v_{S_u}(x,t) = \frac{{(Mu(x), - 1)}}{{\sqrt {1 + {{ | {Mu(x)}  |}
    ^2}} }}.
\end{equation*}
Similarly, it implies that ${{\widetilde
P}_{\mathcal{\widetilde{H}}_{\sigma}}}({S_u},\Omega\times \mathbb
R)$-a.e. $(x,t) \in (\Omega \backslash I)\times \mathbb R$ belongs
to $T$ and   ${{\widetilde
P}_{\mathcal{\widetilde{H}}_{\sigma}}}({S_u},\Omega\times \mathbb
R)$-a.e. $(x,t) \in E$ belongs to $I \times \mathbb R$.

Since $E$ and $T$ are disjoint, the formulas (\ref{eq-3.1}) and
(\ref{eq-3.2}) can be obtained. Now   (\ref{eq-3.3}) can be easily
deduced due to
\begin{equation*}
    \begin{aligned}
  {\pi _\# }(D{1_{{S_u}}}\llcorner E)
  &= {\pi _\# }( {v_{S_u}{{\widetilde P}_{\mathcal{\widetilde{H}}_{\sigma}}}
  ({S_u},\Omega\times \mathbb R)\llcorner(I \times \mathbb R)}  ) \\
  &=({\sigma _u}(x),0) | {({D _{\mathcal{\widetilde{H}}_{\sigma}}}u, - {{{
  \mathcal L}}^d})}  |\llcorner I \\
  &= (D_{\mathcal{\widetilde{H}}_{\sigma}}^Su,0). \\
    \end{aligned}
\end{equation*}

The last formula (\ref{eq-3.4}) can be obtained by the formula (\ref
{eq3.4}) and similarly,
\begin{equation*}
    \begin{aligned}
    {\pi _\# }(D{1_{{S_u}}}\llcorner T)
    &= {\pi _\# }( {v_{S_u}{{\widetilde P}_{\mathcal{\widetilde{H}}_{\sigma}}}({S_u},
    \Omega\times \mathbb R)
    \llcorner ( {(\Omega \backslash I) \times \mathbb R} )} )\\
    &=\frac{{(Mu, - 1)}}{{\sqrt {1 + {{ | {Mu(x)}  |}^2}} }} | {({
    D_{\mathcal{\widetilde{H}}_{\sigma}}}u, - {{{\mathcal L}}^d})}  |\llcorner
    (\Omega \backslash I)\\
    &= (Mu, - 1){{{\mathcal L}}^d}.
    \end{aligned}
\end{equation*}This completes the proof of this theorem.
\end{proof}

\section{Rank-one theorem for ${\mathcal{\widetilde{H}}_{\sigma}}$-restricted BV functions}\label{sec-5}
\hspace{0.6cm} In this section, we prove the rank-one theorem for
$\mathcal{\widetilde{H}}_{\sigma}$-restricted BV functions in
Euclidean spaces by using Theorem \ref {subgraph-1} in Section 4 and
Lemma \ref {sub-1} below which is a key tool.

Let ${\mathcal H^{d-1}}$ be the standard ($d-1$)-dimensional Hausdorff
measure. A set $E \subset {\mathbb R^{d}}$ is rectifiable if
${\mathcal H^{d-1}}(E) < \infty $ and there exists a (finite or
countable) family  of $C^1$ hypersurfaces in
$\mathbb R^{d}$, denoted by $\{{\Sigma _i}\}_{i\in\mathbb N}$, such that
\[{\mathcal H^{d-1}}(E\backslash \mathop  \cup \limits_{i\in\mathbb N} {\Sigma _i}) = 0.\]
We define the  normal ${v_E}$ to $E$ as
\begin{equation*}
{v_E}(x): = {v_{{\Sigma_i}}}(x)\ \ \ \ \mathrm{if}\  x \in E \cap
{\Sigma _i}\backslash { \cup _{j < i}}{\Sigma _j}.
\end{equation*}
Note that the normal ${v_E}$ is well-defined (up to sign) ${\mathcal
H^{d-1}}$-a.e. on $E$,  since the set of points where two $C^1$
hypersurfaces intersect transversally is ${\mathcal
H^{d-1}}$-negligible.

\begin{definition}
Let $ E $ be of  finite ${{\widetilde
P}_{\mathcal{\widetilde{H}}_{\sigma}}}$-perimeter. The
$\mathcal{\widetilde{H}}_{\sigma}$-reduced boundary of $E$, denoted
by ${\partial _{\mathcal{\widetilde{H}}_{\sigma}}}E$, consists of
all points $x \in {\mathbb R^{d+1}}$ for which the following
statements hold:
 \item{{\rm(i)}}\ $ | {D{1_E}}  |( {B(x,r)} ) > 0$
 for all $r>0$,

\item{{\rm(ii)}}\ if
$${n_r}(x,E) =  -\frac{D1_E(B(x,r))}{|D1_E|(B(x,r)) },$$
then the limit $n(x,E) := \mathop {\lim }\limits_{r \to 0}
{n_r}(x,E)$ exists with $ | {n(x,E)}  | = 1$.
\end{definition}

\begin{remark}\label{re-1}
Similarly to \cite[Section 5.5]{Ziemer}, we deduce that $ | {D{1_E}}
|({\mathbb R^{d + 1}} - {\partial _{{{{\widetilde {\cal H}}}_\sigma
}}}E) = 0$ is true. Consequently, ${\partial
_{\mathcal{\widetilde{H}}_{\sigma}}}E$ is $ | {D{1_E}}
 |$-measurable and $ | {D{1_E}}  | =  | {D{1_E}}
 |\llcorner{\partial _{{{{\widetilde {\cal H}}}_\sigma }}}E$. Then
by the rectifiability theorem for measures \cite [Section 2.1.4,
Theorem 2]{GMS}, the  rectifiability of ${\partial
_{\mathcal{\widetilde{H}}_{\sigma}}}E$ can be obtained.
Correspondingly, it follows that  $ | {D{1_E}}  | = {\mathcal
H^{d}}\llcorner{\partial _{\mathcal{\widetilde{H}}_{\sigma}}}E$ if
$E \subset \mathbb R^{d+1}$.
\end{remark}

Via the lemma in   \cite[page 3256]{MV} or \cite[Lemma 3.2]{DMV},
the following result can be derived from the coarea formula in
Theorem \ref{coarea formula}.
\begin{lemma}\label{sub-1}
Let   ${\Sigma _1}$, ${\Sigma _2}$ be $C^1$ hypersurfaces in
${\mathbb R^{d + 1}}$ with unit normals ${v_{{\Sigma _1}}}$ and
${v_{{\Sigma _2}}}$. Then the set
\begin{equation*}
    T: = \Big\{ p \in {\Sigma _1}:\exists q \in {\Sigma _2} \cap
    {\pi ^{ - 1}}(\pi (p)){\rm{~}}\ with\ {\rm{~}}{({v_{{\Sigma _1
    }}}(p))_{2d + 1}} = {({v_{{\Sigma _2}}}(q))_{2d + 1}} = 0{\rm{~}}\
    and\
\end{equation*}
\begin{equation*}
    {\rm{~~~~~~~~~~~~~~~~~~~~~~~~~~~~~~~~~~~~~~~~~~~~~~~~~~~~~~~~~~~
    ~~~~~~~~~~~~~~~~~~~~~}}{v_{{\Sigma _1}}}(p) \ne  \pm {v_{{\Sigma _
    2}}}(q)\Big\}
\end{equation*}
is ${\mathcal H^{d}}$-negligible.
\end{lemma}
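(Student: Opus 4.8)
The plan is to reduce Lemma~\ref{sub-1} to the purely Euclidean geometric fact proved by Massaccesi--Vittone in \cite[p.~3256]{MV} (and, in the Carnot setting, by Don--Massaccesi--Vittone in \cite[Lemma~3.2]{DMV}), the bridge being the identification of the $\mathcal{\widetilde{H}}_{\sigma}$-normal $v_{\Sigma}$ of a $C^1$ hypersurface with its classical unit normal $\nu_{\Sigma}$. Since $v_{\Sigma}$ only makes sense where $x\neq 0$, I may assume $\Sigma_1,\Sigma_2\subset(\mathbb R^d\setminus\{0\})\times\mathbb R$. First I would fix an open set $E$ with $C^1$ boundary $\Sigma=\partial E$ and classical unit normal $\nu_E=(\nu_E^x,\nu_E^t)\in\mathbb R^d\times\mathbb R$, and compute the $\mathbb R^{2d+1}$-valued distribution $D 1_E=(D_{\widetilde{A}_{1,a}}1_E,\ldots,D_{\widetilde{A}_{2d+1,a}}1_E)$. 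Each $\widetilde{A}_{i,a}$ is a first-order operator whose principal part is $\partial_{x_i}$ for $1\le i\le d$, $\partial_{x_{i-d}}$ for $d+1\le i\le 2d$, and $\partial_t$ for $i=2d+1$, so the first $2d$ fields come in pairs sharing the same principal symbol, while the zeroth-order coefficients are bounded multiples of $|x|^{-1}$ on compact subsets of $(\mathbb R^d\setminus\{0\})\times\mathbb R$; hence the Gauss--Green formula gives
\[
D 1_E=(\nu_E^{x},\,\nu_E^{x},\,\nu_E^{t})\,\mathcal H^{d}\llcorner\Sigma\;+\;g\,\mathcal L^{d+1}\llcorner E ,
\]
with $g\in L^\infty_{loc}$ the contribution of the potential terms $\pm\sigma x_k/|x|^2$. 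The remainder is manifestly $\mathcal L^{d+1}$-absolutely continuous with locally bounded density, so the blow-up defining $v_{\Sigma}(p)=-\lim_{r\to 0}D1_E(B(p,r))/|D1_E|(B(p,r))$ at $p\in\Sigma$ ignores it and produces
\[
v_{\Sigma}(p)=\pm\,\frac{(\nu_E^x(p),\,\nu_E^x(p),\,\nu_E^t(p))}{|(\nu_E^x(p),\,\nu_E^x(p),\,\nu_E^t(p))|}.
\]

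From this identity I would read off that $(v_{\Sigma}(p))_{2d+1}=0$ exactly when $\nu_E^t(p)=0$, i.e.\ when the classical normal of $\Sigma$ at $p$ is horizontal, and that in this case $v_{\Sigma}(p)=\tfrac{1}{\sqrt{2}}(\nu_E^x(p),\nu_E^x(p),0)$; consequently, for $p\in\Sigma_1$ and $q\in\Sigma_2$ both of horizontal classical normal, $v_{\Sigma_1}(p)=\pm v_{\Sigma_2}(q)$ is \emph{equivalent} to $\nu_{\Sigma_1}(p)=\pm\nu_{\Sigma_2}(q)$. Hence the set $T$ of the statement equals (up to an $\mathcal H^{d}$-null set)
\[
\Big\{p\in\Sigma_1:\ \exists\,q\in\Sigma_2\cap\pi^{-1}(\pi(p))\ \text{with}\ \nu_{\Sigma_1}(p)\perp e_{d+1},\ \nu_{\Sigma_2}(q)\perp e_{d+1},\ \nu_{\Sigma_1}(p)\neq\pm\nu_{\Sigma_2}(q)\Big\},
\]
where $\pi(x,t)=x$ and $e_{d+1}$ denotes the $t$-direction. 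This is precisely the set shown to be $\mathcal H^{d}$-negligible in \cite[p.~3256]{MV} (with $n=d$) and \cite[Lemma~3.2]{DMV}, so it suffices to quote that result. For a self-contained argument I would re-run its proof: after covering each $\Sigma_i$ by countably many $C^1$ graphs over coordinate $d$-planes, one notes that on the part of $\Sigma_1$ carrying a horizontal classical normal the kernel of $d(\pi|_{\Sigma_1})$ contains $e_{d+1}$, so $\pi|_{\Sigma_1}$ has rank $\le d-1$ there and the area formula makes its image $\mathcal L^{d}$-null; a Sard-type slicing along the vertical fibres — the step in \cite{MV,DMV} that uses the classical coarea formula, and which one performs here by means of Theorem~\ref{coarea formula} so as to accommodate the weight $|x|^{-1}$ — then forces, over $\mathcal H^{d}$-a.e.\ $p$, every aligned horizontal-normal point $q\in\Sigma_2$ to satisfy $\nu_{\Sigma_2}(q)=\pm\nu_{\Sigma_1}(p)$; summing over the countably many pieces yields $\mathcal H^{d}(T)=0$.

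The main obstacle, and essentially the only point genuinely new relative to \cite{MV,DMV}, is the first step: showing rigorously that the $\mathcal H^{d}$-singular part of $D1_E$ is exactly $(\nu_E^x,\nu_E^x,\nu_E^t)\,\mathcal H^{d}\llcorner\Sigma$ and that the potential terms $\pm\sigma x_k/|x|^2$, together with the global linear constraint (\ref{restricted}) built into the $\mathcal{\widetilde{H}}_{\sigma}$-restricted perimeter, affect only an $\mathcal L^{d+1}$-absolutely continuous piece and therefore do not alter the \emph{direction} of $v_{\Sigma}$ on $\Sigma$. Here Lemma~\ref{le-3.1} and Theorem~\ref{coarea formula} are used to control the weighted perimeter measure against $\mathcal H^{d}\llcorner\Sigma$. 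Once this decomposition is secured the identification of $T$ with the Euclidean bad set is immediate and the conclusion follows as in \cite{MV,DMV}, all remaining ingredients being standard measure theory and the area and coarea formulas.
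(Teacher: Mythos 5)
Your proposal is correct and takes essentially the same route as the paper, which offers no independent argument for Lemma \ref{sub-1} but simply derives it from the Euclidean lemma of \cite[p.~3256]{MV} (see also \cite[Lemma 3.2]{DMV}); your bridging step --- that on a $C^1$ hypersurface the zeroth-order terms $\pm\sigma x_k/|x|^2$ contribute only an $\mathcal{L}^{d+1}$-absolutely continuous piece, so that $v_\Sigma$ is the normalization of $(\nu^x_\Sigma,\nu^x_\Sigma,\nu^t_\Sigma)$ and the set $T$ coincides with the Euclidean bad set --- is precisely the (implicit) content of the paper's reduction. The only cosmetic difference is that, once this identification is made, the reduced statement is purely Euclidean, so the result of \cite{MV} applies verbatim and the weighted coarea formula of Theorem \ref{coarea formula} is not actually needed in the slicing step, a looseness your self-contained sketch shares with the paper's own attribution.
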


\begin{theorem}\label{ Rand-one }(Rank-one theorem)
Let $\Omega  \subset {\mathbb R^d}$ be an open and bounded domain.
Assume $u\in {{\mathcal B}}{{{\mathcal
V}}^R_{\mathcal{\widetilde{H}}_{\sigma}}}(\Omega, {\mathbb R^m})$ is
a function with   bounded $ \mathcal{\widetilde{H}}_{\sigma}
$-variation  and satisfies the condition (\ref{eq4}). Let $D
_{\mathcal{\widetilde{H}}_{\sigma}}^Su$ be the singular part of ${D
_{\mathcal{\widetilde{H}}_{\sigma}}}u$ with respect to the Lebesgue
measure ${\mathcal{L} ^d}$. Then $D
_{\mathcal{\widetilde{H}}_{\sigma}}^Su$ is a rank-one measure, i.e.,
the (matrix-valued) function $\frac{{D
_{\mathcal{\widetilde{H}}_{\sigma}}^Su}}{{ | {D
_{\mathcal{\widetilde{H}}_{\sigma}}^Su}  |}}(x)$ has rank one for $D
_{\mathcal{\widetilde{H}}_{\sigma}}^Su$-a.e. $x \in \Omega $.
\end{theorem}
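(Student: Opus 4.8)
The plan is to adapt the subgraph method of Massaccesi--Vittone \cite{MV} (see also \cite{DMV} for the Carnot group case), the essential inputs being Theorems \ref{subgraph-1} and \ref{subgraph-2}, Remark \ref{re-1}, and Lemma \ref{sub-1}. Write $u=(u_1,\dots,u_m)$, so that each $u_i\in\mathcal{BV}^R_{\mathcal{\widetilde{H}}_{\sigma}}(\Omega)$, and keep the decomposition $D_{\mathcal{\widetilde{H}}_{\sigma}}u_i=D_{\mathcal{\widetilde{H}}_{\sigma}}^{A}u_i+D_{\mathcal{\widetilde{H}}_{\sigma}}^{S}u_i$. Since $|D_{\mathcal{\widetilde{H}}_{\sigma}}^{S}u|\ge|D_{\mathcal{\widetilde{H}}_{\sigma}}^{S}u_i|$ for every $i$, I would first write $D_{\mathcal{\widetilde{H}}_{\sigma}}^{S}u_i=g_i\,|D_{\mathcal{\widetilde{H}}_{\sigma}}^{S}u|$ with Borel $g_i\colon\Omega\to\mathbb{R}^{2d}$, so that the rows of $\frac{D_{\mathcal{\widetilde{H}}_{\sigma}}^{S}u}{|D_{\mathcal{\widetilde{H}}_{\sigma}}^{S}u|}(x)$ are exactly $g_1(x),\dots,g_m(x)$; by the polar decomposition $g_i=h_i\,\sigma_{u_i}$, where $h_i:=\frac{d|D_{\mathcal{\widetilde{H}}_{\sigma}}^{S}u_i|}{d|D_{\mathcal{\widetilde{H}}_{\sigma}}^{S}u|}\ge0$ and $|\sigma_{u_i}|=1$. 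Since a matrix whose every pair of rows is linearly dependent has rank at most one, and there are only finitely many pairs, it suffices to prove, for each fixed $i\ne j$, that $\sigma_{u_i}(x)$ and $\sigma_{u_j}(x)$ are parallel for $|D_{\mathcal{\widetilde{H}}_{\sigma}}^{S}u|$-a.e.\ $x$ with $h_i(x)>0$ and $h_j(x)>0$; on the complement of this set one of $g_i,g_j$ vanishes $|D_{\mathcal{\widetilde{H}}_{\sigma}}^{S}u|$-a.e., so the two rows are trivially dependent there.

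Fix $i\ne j$. Applying Theorems \ref{subgraph-1} and \ref{subgraph-2} to $u_i$ and $u_j$, the subgraphs $S_{u_i},S_{u_j}$ have finite $\mathcal{\widetilde{H}}_{\sigma}$-restricted perimeter in $\Omega\times\mathbb{R}$; by Remark \ref{re-1} each reduced boundary $\partial_{\mathcal{\widetilde{H}}_{\sigma}}S_{u_i}$ is $\mathcal{H}^d$-rectifiable with $|D1_{S_{u_i}}|=\mathcal{H}^d\llcorner\partial_{\mathcal{\widetilde{H}}_{\sigma}}S_{u_i}$; and, writing $E_i=\{(x,t):(v_{S_{u_i}})_{2d+1}(x,t)=0\}$, Theorem \ref{subgraph-2} gives $v_{S_{u_i}}(x,t)=(\sigma_{u_i}(x),0)$ on $\partial_{\mathcal{\widetilde{H}}_{\sigma}}S_{u_i}\cap E_i$ and $\pi_{\#}(D1_{S_{u_i}}\llcorner E_i)=(D_{\mathcal{\widetilde{H}}_{\sigma}}^{S}u_i,0)$. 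Since the density $(\sigma_{u_i}\circ\pi,0)$ of $D1_{S_{u_i}}\llcorner E_i$ with respect to $\mathcal{H}^d\llcorner(\partial_{\mathcal{\widetilde{H}}_{\sigma}}S_{u_i}\cap E_i)$ depends only on the base point and has unit norm, passing to total variations yields
\[\pi_{\#}\big(\mathcal{H}^d\llcorner(\partial_{\mathcal{\widetilde{H}}_{\sigma}}S_{u_i}\cap E_i)\big)=|D_{\mathcal{\widetilde{H}}_{\sigma}}^{S}u_i|.\]
In particular $\pi^{-1}(x)\cap\partial_{\mathcal{\widetilde{H}}_{\sigma}}S_{u_i}\cap E_i\ne\emptyset$ for $|D_{\mathcal{\widetilde{H}}_{\sigma}}^{S}u_i|$-a.e.\ $x$, and $|D_{\mathcal{\widetilde{H}}_{\sigma}}^{S}u_i|(B)=\mathcal{H}^d\big((\partial_{\mathcal{\widetilde{H}}_{\sigma}}S_{u_i}\cap E_i)\cap\pi^{-1}(B)\big)$ for every Borel $B\subseteq\Omega$.

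Next I would cover $\partial_{\mathcal{\widetilde{H}}_{\sigma}}S_{u_i}\cap E_i$ and $\partial_{\mathcal{\widetilde{H}}_{\sigma}}S_{u_j}\cap E_j$, up to $\mathcal{H}^d$-null sets, by countably many $C^1$ hypersurfaces $\{\Sigma_k^i\}_k$, $\{\Sigma_l^j\}_l$ in $\mathbb{R}^{d+1}$ whose unit normals coincide $\mathcal{H}^d$-a.e.\ (up to sign) with $v_{S_{u_i}}$, $v_{S_{u_j}}$, and apply Lemma \ref{sub-1} to each pair $(\Sigma_k^i,\Sigma_l^j)$. Translating the exceptional condition of Lemma \ref{sub-1} through the identity $v_{S_{u_i}}=(\sigma_{u_i}\circ\pi,0)$ on $E_i$, a point $p$ of $\partial_{\mathcal{\widetilde{H}}_{\sigma}}S_{u_i}\cap E_i$ is exceptional precisely when its base point $x=\pi(p)$ lies in
\[B:=\big\{x\in\Omega:\ \pi^{-1}(x)\cap\partial_{\mathcal{\widetilde{H}}_{\sigma}}S_{u_i}\cap E_i\ne\emptyset,\ \pi^{-1}(x)\cap\partial_{\mathcal{\widetilde{H}}_{\sigma}}S_{u_j}\cap E_j\ne\emptyset,\ \sigma_{u_i}(x)\ne\pm\sigma_{u_j}(x)\big\},\]
because the normals on the vertical parts depend on $x$ alone; hence the total exceptional set is $(\partial_{\mathcal{\widetilde{H}}_{\sigma}}S_{u_i}\cap E_i)\cap\pi^{-1}(B)$, which Lemma \ref{sub-1} (applied to the countable family of pairs) forces to be $\mathcal{H}^d$-negligible, so $|D_{\mathcal{\widetilde{H}}_{\sigma}}^{S}u_i|(B)=0$ by the displayed pushforward identity. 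Combining this with the facts that $\{x:\pi^{-1}(x)\cap\partial_{\mathcal{\widetilde{H}}_{\sigma}}S_{u_i}\cap E_i=\emptyset\}$ and the analogous set for $j$ are $|D_{\mathcal{\widetilde{H}}_{\sigma}}^{S}u_i|$- and $|D_{\mathcal{\widetilde{H}}_{\sigma}}^{S}u_j|$-null, and that $|D_{\mathcal{\widetilde{H}}_{\sigma}}^{S}u|\llcorner\{h_i>0,h_j>0\}$ is absolutely continuous with respect to both $|D_{\mathcal{\widetilde{H}}_{\sigma}}^{S}u_i|$ and $|D_{\mathcal{\widetilde{H}}_{\sigma}}^{S}u_j|$, I conclude $\sigma_{u_i}(x)=\pm\sigma_{u_j}(x)$, hence $g_i(x)\parallel g_j(x)$, for $|D_{\mathcal{\widetilde{H}}_{\sigma}}^{S}u|$-a.e.\ $x$ with $h_i(x),h_j(x)>0$. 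Intersecting over the finitely many pairs $(i,j)$ then gives that $\frac{D_{\mathcal{\widetilde{H}}_{\sigma}}^{S}u}{|D_{\mathcal{\widetilde{H}}_{\sigma}}^{S}u|}(x)$ has rank one (it is nonzero $|D_{\mathcal{\widetilde{H}}_{\sigma}}^{S}u|$-a.e.) for $|D_{\mathcal{\widetilde{H}}_{\sigma}}^{S}u|$-a.e.\ $x\in\Omega$.

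I expect the main obstacle to be exactly this last measure-theoretic transfer: Lemma \ref{sub-1} only controls an $\mathcal{H}^d$-null subset of the covering hypersurfaces in $\mathbb{R}^{d+1}$, while what one needs is a $|D_{\mathcal{\widetilde{H}}_{\sigma}}^{S}u_i|$-null subset of $\Omega$, and $|D_{\mathcal{\widetilde{H}}_{\sigma}}^{S}u_i|$ is singular with respect to $\mathcal{L}^d$, so a crude projection estimate would lose everything. The point that rescues the argument — and which must be invoked with care — is that, by Theorem \ref{subgraph-2}, the inner normal of $S_{u_i}$ on its vertical part $E_i$ equals $(\sigma_{u_i}(x),0)$, a function of the base point only; this makes the bad set $\pi$-saturated inside the reduced boundary, so that its $|D_{\mathcal{\widetilde{H}}_{\sigma}}^{S}u_i|$-measure is literally its $\mathcal{H}^d$-measure through the pushforward identity, which is zero. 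A secondary technical point is the careful bookkeeping over the countable coverings and the reduction to the set $\{h_i>0,\ h_j>0\}$, which is routine once the saturation observation is in hand.
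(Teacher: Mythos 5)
Your proposal is correct and follows essentially the same route as the paper's proof: the Massaccesi--Vittone subgraph method, reducing to pairs $(u_i,u_j)$, using Theorems \ref{subgraph-1} and \ref{subgraph-2} with Remark \ref{re-1} to identify the vertical part of $\partial_{\mathcal{\widetilde{H}}_{\sigma}}S_{u_i}$, its normal $(\sigma_{u_i}\circ\pi,0)$, and the pushforward identity $|D_{\mathcal{\widetilde{H}}_{\sigma}}^{S}u_i|=\pi_{\#}(\mathcal{H}^d\llcorner E_i)$, and then invoking Lemma \ref{sub-1} on the countable covering by $C^1$ hypersurfaces. Your normalization by $|D_{\mathcal{\widetilde{H}}_{\sigma}}^{S}u|$ with densities $h_i$ and the explicit $\pi$-saturation argument are only cosmetic variants of the paper's modification-on-negligible-sets bookkeeping and its statement that $\sigma_j(x)\in\{0,\pm\sigma_i(x)\}$ for $x\in\pi(E_i)$.
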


\begin{proof}
We adopt the method of main results in \cite {MV}  or \cite{DMV} to
give the proof.  Let $u = ({u_1}, \ldots ,{u_m}) \in {{\mathcal
B}}{{{ \mathcal V}}^R_{\mathcal{\widetilde{H}}_{\sigma}}}(\Omega,
{\mathbb R^m})$.  For any $i = 1, \ldots ,m$, denote by $D
_{\mathcal{\widetilde{H}}_{\sigma}}^S{u_i} = {\sigma _i} | {D _
{\mathcal{\widetilde{H}}_{\sigma}}^S {u_i}}  |$ for a $ |
{D_{\mathcal{\widetilde{H}}_{\sigma}}^S{u_i}}  |$-measurable map
${\sigma _i}:\Omega  \to {\mathbb S^ {2d - 1}}$. Note that the
equality ${\sigma _i} = {\sigma _{{u_i}}}$ holds $ | {D
_{\mathcal{\widetilde{H}}_{\sigma}}^S{u_i}}  |$-almost everywhere
 using the notation in Section 4. Let
 $${S_i}: =  \Big\{ {(x,t) \in
\Omega  \times \mathbb R:\ t < {u_i}(x)}  \Big\}$$ be the subgraph
of ${{u_i}}$. By Theorem \ref{subgraph-1},  ${S_i}$ has finite
${{\widetilde P}_{\mathcal{\widetilde{H}}_{\sigma}}}$-perimeter in
$\Omega \times \mathbb R$. For convenience, we define the
$\mathcal{\widetilde{H}}_{\sigma}$-reduced boundary of ${S_i}$ as
${\partial _{\mathcal{\widetilde{H}}_{\sigma}}}{S_i}$ and write
${v_i} = {v_{{E_i}}}$ for the measure-theoretic inner normal to
${S_i}$. By Theorem \ref{subgraph-2} and Remark \ref {re-1}, we have
\begin{equation*}
     | {D _{\mathcal{\widetilde{H}}_{\sigma}}^S{u_i}}  |= {\pi _\# }
    ({\mathcal H^{d}} \llcorner {E_i}),
\end{equation*}
where ${E_i}: =  \{ {p \in {\partial
_{\mathcal{\widetilde{H}}_{\sigma}}} {S_i}:{{({v_i}(p))}_{2d + 1}} =
0}
 \}$ and ${\pi _\# }$ denotes push-forward of measures defined
in Section 4. The set ${E_i}$ is  rectifiable and we can assume that
it is contained in the union ${ \cup _{h \in \mathbb N}}\Sigma _h^i$
of $C^1$ hypersurfaces $\Sigma _h^i$ in ${\mathbb R^{d + 1}}$.

By Theorem \ref{subgraph-2}, Remark \ref {re-1} and Lemma
\ref {sub-1},  we apply the well-known properties of rectifiable sets to
conclude that the following properties hold for ${\mathcal H^
{d}}$-a.e.{\rm{~}}$p \in {E_1} \cup \ldots  \cup {E_m}$:
\begin{equation}\label{eq-4.1}
    {v_{{\partial _{\mathcal{\widetilde{H}}_{\sigma}}}{S_i}}}(p) =
    ({\sigma _i}(\pi (p)),0);
\end{equation}
\begin{equation}\label{eq-4.2}
    {\rm{if~}}p \in \Sigma _h^i, {\rm{~then~}}{v_i}(p)=
    \pm {v_{\Sigma _h^i}}(p);
\end{equation}
\begin{equation}\label{eq-4.3}
    {\rm{if~}}p \in \Sigma _h^i{\rm{~and~}}q \in {E_j} \cap
    \Sigma _k^j \cap
    {\pi ^{ - 1}}(\pi (p)),{\rm{~then~}}{v_{\Sigma _h^i}}(p)=
    \pm {v_{\Sigma _k^j}}(q).
\end{equation}
Via modifying ${E_i}$ on an ${\mathcal H^{d}}$-negligible set and
${\sigma _i}$ on a $ |{D _{\mathcal{\widetilde{H}}_{\sigma}}^S{u_i}}
|$-negligible set, we can assume that the properties
(\ref{eq-4.1})-(\ref{eq-4.3}) hold everywhere on ${{E_i}}$ and
${\sigma _i}=0$ on $\Omega \backslash \pi ({E_i})$.

Since $D_{\mathcal{\widetilde{H}}_{\sigma}}^Su = ( {{\sigma _1} |
{D_{\mathcal{\widetilde{H}}_{\sigma}}^S{u_1}}  |, \ldots ,{\sigma
_m} | {D_{\mathcal{\widetilde{H}}_{\sigma}}^S{u_m}}  |} )$ and $ |
{D _{\mathcal{\widetilde{H}}_{\sigma}}^S{u}}  |$ is concentrated on
the union $\pi ({E_1}) \cup  \ldots \cup \pi ( {E_m})$, so we just
need to prove that the matrix-valued function $({\sigma _1}, \ldots
,{\sigma _m})$ has rank one on the set $\pi ({E_1}) \cup \ldots \cup
\pi ({E_m})$. The proof of the following fact
\begin{equation*}
i,j \in \{ 1, \ldots ,m\} ,{\rm{~}}i \ne j,{\rm{~}}x \in \pi ({E_i})
\Longrightarrow {\sigma _j}(x) \in \{ 0,{\sigma _i}(x),- {\sigma
_i}(x)\},
\end{equation*} derives  the desired result.

If $i, j, x$ are given as above and $x \notin \pi ({E_j})$, then
${\sigma _j} (x) = 0$. Otherwise, $x \in \pi ({E_i}) \cap \pi
({E_j})$, i.e., there exist $p \in {E_i}$ and $h \in \mathbb N$ such
that $\pi (p) = x$ and ${\sigma _i}(x)= \pm {v_{\Sigma _h^i}}(p)$. Also,
 there exist $q \in {E_j}$ and $k \in \mathbb N$ such that $\pi
(q) = x$ and ${\sigma _j}(x)= \pm {v_{\Sigma _k^j}}(p)$. From
(\ref{eq-4.3}), we conclude that ${\sigma _j}(x)= \pm {\sigma
_i}(x)$. This completes the proof of Theorem \ref{ Rand-one }.
\end{proof}

\section{Acknowledgements}
\hspace{0.4cm}{ J.Z. Huang was supported  by the Fundamental
Research Funds for the Central Universities  (No.\,500421126).}

{P.T. Li was supported by the National Natural Science Foundation of
China   ( No.\,11871293),  Shandong Natural Science Foundation of
China (No.\,ZR2017JL008) and University Science and Technology
Projects of Shandong Province (No.\,J15LI15).}

{Y. Liu was supported by the National Natural Science Foundation of
China (No.\,11671031)  and Beijing Municipal Science and Technology
Project (No.\,Z17111000220000). }

\hspace{-0.65cm}
{\bf Address: }

\flushleft   \flushleft Yang Han\\
          School of Mathematics and Physics\\
          University of Science and Technology Beijing\\
          Beijing 100083,  China\\
          E-mail: hanyang697@163.com

\flushleft Jizheng Huang\\
School of Science \\
Beijing University of Posts and Telecommunications\\
 Beijing 100876,
 China\\
 E-mail address: hjzheng@163.com

          \flushleft Pengtao Li\\
          School of Mathematics and Statistics\\
           Qingdao University\\
            Qingdao,  Shandong 266071,  China\\
E-mail address: ptli@qdu.edu.cn

 \flushleft Yu Liu\\
          School of Mathematics and Physics\\
          University of Science and Technology Beijing\\
          Beijing 100083,  China\\
          E-mail: liuyu75@pku.org.cn

\end{document}